\NeedsTeXFormat{LaTeX2e}
[1994/12/01]
\documentclass[10pt, reqno]{amsart}
\usepackage[english, activeacute]{babel}
\usepackage{amsmath,amsthm,amsxtra}
\usepackage{epsfig}
\usepackage{amssymb}
\usepackage{latexsym}
\usepackage{amsfonts}
\usepackage{hyperref}
\pagestyle{headings}
\usepackage{pdftricks}
\setlength\arraycolsep{1.5pt}



\title{Dynamics of complex-valued modified KdV solitons with applications to the stability of breathers}
\author{Miguel A. Alejo}
\author{Claudio Mu\~noz}
\address{IMPA(Instituto Nacional de Matem\'atica Pura e Aplicada), Rio de Janeiro\\ Brasil}
\email{malejo@impa.br}
\address{CNRS and Laboratoire de Math\'ematiques d'{}Orsay UMR 8628, B\^at. 425 Facult\'e des Sciences d'{}Orsay,
Universit\'e Paris-Sud F-91405 Orsay Cedex France}
\email{claudio.munoz@math.u-psud.fr}
\date{\today}
\subjclass[2000]{Primary 35Q51, 35Q53; Secondary 37K10, 37K40}
\keywords{mKdV equation, B\"acklund transformation, solitons, breather, stability}
\thanks{}


\chardef\bslash=`\\ 





\hfuzz1pc 


\newtheorem{thm}{Theorem}[section]
\newtheorem{cor}[thm]{Corollary}
\newtheorem{lem}[thm]{Lemma}
\newtheorem{prop}[thm]{Proposition}

\newtheorem{defn}[thm]{Definition}

\theoremstyle{remark}
\newtheorem{rem}{Remark}[section]

\numberwithin{equation}{section}



%

\newcommand{\R}{\mathbb{R}}

\newcommand{\Z}{\mathbb{Z}}

\newcommand{\Com}{\mathbb{C}}

\newcommand{\al}{\alpha}
\newcommand{\bt}{\beta}
\newcommand{\ga}{\gamma}

\newcommand{\re}{\operatorname{Re}}
\newcommand{\ima}{\operatorname{Im}}

\def\bm{\left( \begin{array}{cc}}
\def\endm{\end{array}\right)}

 \providecommand{\abs}[1]{\lvert#1 \rvert}
 \providecommand{\norm}[1]{\lVert#1 \rVert}

\newcommand{\ve}{\varepsilon}

\newcommand{\be}{\begin{equation}}
\newcommand{\ee}{\end{equation}}
\newcommand{\ba}{\left(\begin{array}{c}}
\newcommand{\ea}{\end{array}\right)}
\newcommand{\bea}{\begin{eqnarray}}
\newcommand{\eea}{\end{eqnarray}}
\newcommand{\bee}{\begin{eqnarray*}}
\newcommand{\eee}{\end{eqnarray*}}
\newcommand{\ben}{\begin{enumerate}}
\newcommand{\een}{\end{enumerate}}
\newcommand{\nonu}{\nonumber}



\newcommand{\eval}[2][\right]{\relax
  \ifx#1\right\relax \left.\fi#2#1\rvert}


\let\abs=\envert


\let\norm=\enVert

\begin{document}
\begin{abstract}
We study the \emph{long-time} dynamics of complex-valued modified Korteweg-de Vries (mKdV) solitons, which are recognized because they blow-up in finite time. We establish stability properties at the $H^1$ level of regularity, uniformly away from each blow-up point. These new properties are used to prove that mKdV breathers are $H^1$ stable, improving our previous result \cite{AM}, where we only proved $H^2$ stability. The main new ingredient of the proof is the use of a B\"acklund transformation which relates the behavior of breathers, complex-valued solitons and small real-valued solutions of the mKdV equation. We also prove that negative energy breathers are asymptotically stable. Since we do not use any method relying on the Inverse Scattering Transform, our proof works even under $L^2(\R)$ perturbations, provided a corresponding local well-posedness theory is available. 
\end{abstract}
\maketitle \markboth{Stability of breathers} {Miguel A. Alejo and Claudio Mu\~noz}
\renewcommand{\sectionmark}[1]{}
\tableofcontents

\section{Introduction}

\medskip

Consider the modified Korteweg-de Vries (mKdV) equation on the real line
\be\label{mKdV}
u_t+(u_{xx} + u^3)_x =0,
\ee
where $u=u(t,x)$ is a complex-valued function, and $(t,x)\in \R^2$. Note that \eqref{mKdV} is not $U(1)$-invariant.  In the case of real-valued initial data, the associated Cauchy problem for \eqref{mKdV} is globally well-posed for initial data in $H^s(\R)$, for any $s> \frac14$, see Kenig-Ponce-Vega \cite{KPV}, and Colliander, Keel, Staffilani, Takaoka and Tao \cite{CKSTT}. Additionally, the (real-valued) flow map is not uniformly continuous if $s<\frac 14$ \cite{KPV2}.\footnote{However, one can construct a solution in $L^2$, see \cite{CHT}.} In order to prove this last result, Kenig, Ponce and Vega considered a very particular class of solutions of \eqref{mKdV} called \emph{breathers}, discovered by Wadati in \cite{W1}.

\begin{defn}[See e.g. \cite{W1,La}]\label{mKdVB} 
Let $\al, \bt  >0$ and $x_1,x_2\in \R$ be fixed parameters. The mKdV breather is a smooth solution of \eqref{mKdV} given explicitly  by the formula
\be\label{breather}
\begin{split}
 B:= B(t,x; \al,\bt, x_1,x_2)   := & 2\sqrt{2} \partial_x \Big[ \arctan \Big( \frac{\bt}{\al}\frac{\sin(\al y_1)}{\cosh(\bt y_2)}\Big) \Big] ,
\end{split}
\ee
where
\be\label{y1y2}
y_1 := x+ \delta t + x_1, \quad y_2 : = x+ \ga t + x_2,
\ee
and
\be\label{deltagamma}
\delta := \al^2 -3\bt^2, \quad  \ga := 3\al^2 -\bt^2.
\ee
\end{defn}

\medskip

Breathers are \emph{oscillatory bound states}. They are periodic in time (after a suitable space shift) and localized in space. The parameters $\al$ and $\bt$ are scaling parameters, $x_1,x_2$ are shifts, and $-\ga$ represents the \emph{velocity} of a breather.  As we will see later, the main difference between \emph{solitons}\footnote{See \eqref{Sol}.} and breathers is given at the level of the \emph{oscillatory} scaling $\al$, which is not present in the case of solitons. For a detailed account of the physics of breathers see e.g. \cite{La,AC,Au,Ale,AM} and references therein.

\medskip

Numerical computations (see Gorria-Alejo-Vega \cite{AGV}) showed that breathers are \emph{numerically} stable. Next, in \cite{AM} we constructed a Lyapunov functional that controls the dynamics of $H^2$-perturbations of \eqref{breather}. The purpose of this paper is to improve our previous result \cite{AM} and show that mKdV breathers are indeed $H^{1}$ stable, i.e. stable in the energy space.

\begin{thm}\label{T1} Let $\al, \bt >0$ be fixed scalings. There exist parameters $\eta_0, A_0$, depending on $\al$ and $\bt$ only, such that the following holds.  Consider $u_0 \in H^1(\R)$, and assume that there exists $\eta \in (0,\eta_0)$ such that 
\be\label{In}
\|  u_0 - B(0,\cdot;\al,\bt,0,0) \|_{H^1(\R)} \leq \eta.
\ee
Then there exist functions $x_1(t), x_2(t) \in \R$ such that the solution $u(t)$ of the Cauchy problem for the mKdV equation \eqref{mKdV}, with initial data $u_0$, satisfies
\be\label{Fn1}
\sup_{t \, \in \, \R}\big\| u(t) - B(t,\cdot ; \al,\bt, x_1(t),x_2(t)) \big\|_{H^1(\R)}\leq A_0 \eta,
\ee
\be\label{Fn2}
\sup_{t \, \in \, \R} |x_1'(t)|+|x_2'(t)| \leq CA_0 \eta,
\ee
for some constant $C>0.$
\end{thm}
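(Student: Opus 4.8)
The plan is to prove $H^1$ stability of breathers by exploiting the Bäcklund transformation announced in the abstract, which relates breathers to complex-valued solitons and to small real-valued mKdV solutions. The central idea is that stability in the "breather variables" is difficult to establish directly because the natural Lyapunov functional controlling breather perturbations (as in \cite{AM}) lives at the $H^2$ level, owing to the fourth-order conserved quantity needed to pin down the two scaling parameters $\al,\bt$ and the oscillatory structure. Instead of working with this high-order functional, I would transfer the problem through the Bäcklund transformation to a setting where only $H^1$ (energy-space) control is needed, prove stability there, and then pull the result back.

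\emph{First}, I would set up the Bäcklund transformation carefully: given a solution $u$ of \eqref{mKdV} close to a breather $B(\cdot;\al,\bt,0,0)$, the transformation should produce an auxiliary solution $w$ of \eqref{mKdV} whose relevant orbit is a complex-valued soliton (or the zero solution, in the small-data regime), and conversely the transformation should be invertible on a neighborhood of the breather orbit. The key analytic requirement is that this map be continuous (and quantitatively Lipschitz) from $H^1$ to $H^1$, uniformly away from the blow-up points of the complex soliton — this is exactly the content of the "uniform stability away from blow-up" results the abstract advertises. Concretely, I would show that $\|u_0 - B(0,\cdot)\|_{H^1}\le\eta$ implies that the transformed initial datum $w_0$ is $O(\eta)$-close in $H^1$ to the corresponding complex soliton datum, with an implicit constant depending only on $\al,\bt$.

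\emph{Second}, I would prove the stability statement on the transformed side. For a complex-valued soliton, or equivalently for the small real-valued solution produced after fully unwinding the double Bäcklund transformation, one has access to the standard energy/mass conservation laws of mKdV and the coercivity of the linearized operator around the soliton (modulo the symmetries of scaling and translation). Using the conserved energy $E[u]=\int(\frac12 u_x^2-\frac14 u^4)$ and mass $M[u]=\frac12\int u^2$, together with modulation parameters to kill the zero-directions, I would obtain a Lyapunov bound $\|w(t)-(\text{soliton})\|_{H^1}\le A_0\eta$ uniform in $t$, holding on the time interval where the solution stays away from any blow-up point of the complex soliton. This is the step where only $H^1$ regularity is required, which is precisely the gain over \cite{AM}.

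\emph{Third}, I would pull the uniform-in-time $H^1$ bound back through the (Lipschitz) inverse Bäcklund transformation to recover $\|u(t)-B(t,\cdot;\al,\bt,x_1(t),x_2(t))\|_{H^1}\le A_0\eta$, where the shift parameters $x_1(t),x_2(t)$ arise as the images of the modulation parameters on the transformed side; the bound \eqref{Fn2} on $|x_1'(t)|+|x_2'(t)|$ would then follow from the modulation equations, whose right-hand sides are $O(\eta)$ by the orthogonality conditions and the conservation laws. \textbf{The main obstacle} I anticipate is the uniformity of the Bäcklund map near the blow-up points of the complex soliton: the complex soliton blows up in finite time, so the transformation degenerates there, and I would need to show that these degeneracies are integrable/harmless at the level of the breather — that the breather itself stays smooth and the $H^1$ closeness is preserved \emph{uniformly in time} even as the auxiliary complex soliton passes through its singular times. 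Controlling the constants in the transformation uniformly across these singular times, and verifying that the modulation parameters remain well-defined and slowly varying throughout, is where the real technical work lies.
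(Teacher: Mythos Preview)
Your overall strategy---double B\"acklund transformation from the breather to a small real-valued mKdV solution, global control of the small solution via mass/energy, then inversion back---is exactly the route the paper takes. However, two genuine ingredients are missing from your sketch, and your proposed treatment of the blow-up times is not the one that works.

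\textbf{First gap: why is the seed real-valued?} You write ``the small real-valued solution produced after fully unwinding the double B\"acklund transformation'' as if this were automatic. It is not. A single B\"acklund step from a real breather perturbation lands you near a \emph{complex} soliton $Q^0$; a second step with parameter $\bt+i\al$ lands you near zero, but a priori the resulting $y_a^0$ is complex-valued, and complex-valued mKdV has no global well-posedness theory (indeed it blows up). The paper devotes an entire theorem (the \emph{permutability theorem}) to proving that $y_a^0$ is real: one shows that performing the two B\"acklund steps in the reverse order ($\bt-i\al$ then $\bt+i\al$) must give the same result, and that the two intermediate complex solitons are conjugates of each other, forcing $y_a^0=\overline{y_a^0}$. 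This is the single most delicate algebraic point of the argument and cannot be skipped.

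\textbf{Second gap: the blow-up times.} You correctly identify the main obstacle, but your proposed resolution---``controlling the constants in the transformation uniformly across these singular times''---does not work and is not what the paper does. The inversion constants in the B\"acklund transformation genuinely diverge as $t\to t_k$ (the function $\mu^1=1/\mu$ leaves $H^1$), so there is no hope of making the map uniform there. Instead the paper \emph{abandons} the B\"acklund machinery on each short interval $|t-t_k|\le\ve_0$ and runs a direct bootstrap on $z_a=u-B^*$ using the equation it satisfies: one estimates $\partial_t\tfrac12\int z_a^2$ and $\partial_t\tfrac12\int(z_a)_x^2$ crudely by $CA_0^2\eta^2$, so that over an interval of length $2\ve_0$ the $H^1$ norm can at most double. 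Choosing $\ve_0$ small (independently of $k$) and then $A_0$ large closes the bootstrap. This short-time energy argument is elementary but essential; it is the mechanism that bridges the B\"acklund estimates across each singularity.

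A minor correction: the modulation parameters $x_1(t),x_2(t)$ are imposed on the breather side via orthogonality to $B_1,B_2$, not on the transformed side; and on the transformed side the relevant bound is simply $\|y_a(t)\|_{H^1}\le C\eta$ near zero, with no soliton coercivity needed.
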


\medskip

The initial condition (\ref{In}) can be replaced by any initial breather profile of the form $ B(t_0; \al,\bt, x_1^0, x_2^0)$, with $t_0, x_1^0, x_2^0 \in \R$, thanks to the invariance of the equation under translations in time and space.\footnote{Indeed, if $u(t,x)$ solves \eqref{mKdV}, then for any $t_0, x_0\in \R$, and $c>0$, $u(t-t_0, x-x_0)$, $c^{1/2} u(c^{3/2} t, c^{1/2} x)$, $u(-t,-x)$ and $-u(t,x)$ are solutions of (\ref{mKdV}).}  Moreover, using the Miura transform \cite{Kruskal}, one can prove a natural stability property in $L^2(\R;\Com)$ for the associated complex-valued KdV breather.  

\medskip

Additionally, from the proof the shifts $x_1(t)$ and $x_2(t)$ in Theorem \ref{T1} can be described almost explicitly\footnote{See equation \eqref{x1x2}.}, which is a sustainable improvement with respect to our previous result \cite{AM}, where no exact control on the shift parameters was given. We recall that we obtain such a control with no additional decay assumptions on the initial data other than being in $H^1(\R)$.

\medskip

Theorem \ref{T1} places breathers as stable objects at the same level of regularity as mKdV solitons, even if they are very different in nature. To be more precise, a (real-valued) soliton is a solution of \eqref{mKdV} of the form
\be\label{Sol}
u(t,x) = Q_c (x-ct), \quad Q_c(s) := \sqrt{c} \,Q(\sqrt{c} s), \quad c>0,
\ee
with
\[
Q (s):= \frac{\sqrt{2}}{\cosh (s)} =2\sqrt{2} \, \partial_s[\arctan(e^{s})],
\]
and where $Q_c>0$ satisfies the nonlinear ODE
\be\label{ecQc}
Q_c'' -c\, Q_c +Q_c^3=0, \quad Q_c\in H^1(\R).
\ee
We recall that solitons are $H^1$-stable (Benjamin \cite{Benj}, Bona-Souganidis-Strauss \cite{BSS}). See also the works by Grillakis-Shatah-Strauss \cite{GSS} and Weinstein \cite{We2} for the nonlinear Schr\"odinger case. 

\medskip

Even more surprising is the fact that Theorem \ref{T1} will arise as a consequence of a suitable stability property of the zero solution and of {\bf  complex-valued} mKdV solitons, which are singular solutions. 

\medskip

A complex-valued soliton is a solution of the form \eqref{Sol} of \eqref{mKdV}, with a complex-valued  scaling and velocity, i.e., 
\be\label{Qc}
u(t,x): = Q_c(x-ct), \quad \sqrt{c}:= \bt + i\al, \quad \al,\bt>0,
\ee
see Definition \ref{Qab} for a rigorous interpretation. In Lemma \ref{Good} we give a detailed description of the singular nature of \eqref{Qc}. On the other hand, very little is known about mKdV \eqref{mKdV} when the initial data is complex-valued. For instance, it is known that  it has finite time blow-up solutions, the most important examples being the complex solitons themselves, see e.g. Bona-Vento-Weissler \cite{Bona} and references therein for more details. According to \cite{Bona}, blow-up in the complex-valued case can be understood as the intersection with the real line $x\in \R$ of a curve of poles of the solution after being extended to the complex plane (i.e., now $x$ is replaced by $z\in \Com$). Blow-up in this case seems to have better properties than the corresponding critical blow-up described by Martel and Merle in \cite{MaMe}.

\medskip

Let $H^1(\R;\Com)$ denote the standard Sobolev space of complex-valued functions $f(x) \in  \Com$, $x\in \R$. In this paper we prove the following \emph{stability property} for solitons, far away from each blow-up time.

\begin{thm}\label{T2}
There exists an open set of initial data in $H^1(\R;\Com)$ for which the mKdV complex solitons are well-defined and stable in $H^1(\R;\Com)$ for all times uniformly separated from a countable sequence of finite blow-up times with no limit points. Moreover, one can define a mass and an energy, both invariant {\bf for all time}.
\end{thm}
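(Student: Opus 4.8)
The plan is to transfer the elementary stability of the zero solution to the complex soliton $Q_c$ through the Bäcklund transformation announced in the abstract. I would begin by making quantitative the singular picture of Lemma~\ref{Good}. The complex soliton $Q_c(x-ct)$, with $\sqrt c=\bt+i\al$ and $\bt,\al>0$, is singular exactly where the argument $(\bt+i\al)(x-ct)$ meets a zero of $\cosh$, that is, a point $i\pi(n+\tfrac12)$ with $n\in\Z$. Because $c=(\bt+i\al)^2$ has both $\re c\neq0$ and $\ima c\neq0$, for each fixed $x\in\R$ the curve $t\mapsto(\bt+i\al)(x-ct)$ is a straight line crossing these zeros transversally; the real axis is therefore met at a discrete set of times $\{t_n\}_n$ with $|t_n|\to\infty$, which is precisely the countable blow-up sequence without limit points asserted in the statement.

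Next I would realize the Bäcklund transformation as a map $\mathcal B$ on $H^1(\R)$ built from the first-order (in $x$) Bäcklund system relating two solutions of \eqref{mKdV} through the complex parameter $\sqrt c=\bt+i\al$, normalized so that $\mathcal B(0)=Q_c(0,\cdot)$. Solving the associated Riccati-type ODE (or invoking the implicit function theorem around the background $Q_c$), one obtains that $\mathcal B$ and its inverse are well defined and Lipschitz from a small ball of \emph{real-valued} $H^1$ data into a neighborhood of $Q_c(0,\cdot)$ in $H^1(\R;\Com)$, away from the singular times. The open set of admissible initial data in the statement is then the $\mathcal B$-image of such a ball, which is open precisely because $\mathcal B$ is a local homeomorphism.

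The heart of the argument is that $\mathcal B$ intertwines the two mKdV flows: if $w(t)$ solves \eqref{mKdV} then $\mathcal B(w(t))$ solves it as well, wherever it is defined. Given $u_0$ in the open set, set $w_0:=\mathcal B^{-1}(u_0)$, a small real datum. By global well-posedness of real mKdV (Kenig--Ponce--Vega \cite{KPV}, Colliander--Keel--Staffilani--Takaoka--Tao \cite{CKSTT}) together with conservation of the $L^2$-mass and of the energy, $w(t)$ exists for all $t\in\R$ and stays small in $H^1$; that is, $0$ is a globally $H^1$-stable equilibrium. Then $u(t):=\mathcal B(w(t))$ is the desired complex solution, and the Lipschitz bound on $\mathcal B$ turns the smallness of $w(t)$ into $\sup_t\|u(t)-Q_c(t,\cdot)\|_{H^1}\lesssim\eta$ on every time interval uniformly separated from the $t_n$. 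Since the only singularities of $u$ are those of $\mathcal B$ along the trajectory, a transversality argument at each crossing shows that they remain close to the $t_n$ and again do not accumulate.

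For the invariants I would use the functionals dictated by the absence of $U(1)$ symmetry, with no complex conjugation: the mass $M[u]:=\int_\R u^2\,dx$ and the energy $E[u]:=\int_\R\big(\tfrac12 u_x^2-\tfrac14 u^4\big)\,dx$. Evaluated on the exact profile these give finite complex constants (for instance $M[Q_c]=4\sqrt c$ with this normalization), hence are conserved on each interval between consecutive blow-up times. To rule out a jump across some $t_n$ I would use the Bäcklund identities: $M$ and $E$ of $u=\mathcal B(w)$ equal the globally conserved mass and energy of $w$ up to explicit constants depending only on $\sqrt c$, and since the invariants of the regular solution $w$ never blow up and depend continuously on $t$, those of $u$ extend continuously---indeed remain constant---through every $t_n$. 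The step I expect to be hardest is the uniform control of $\mathcal B$ near its singular times: one must establish bi-Lipschitz estimates on the whole line whose constants degenerate only at the $t_n$, together with the transversality that prevents perturbations from generating accumulating singularities. Once $\mathcal B$ is controlled, the global well-posedness of $w$ and the algebra of the conserved densities are comparatively routine.
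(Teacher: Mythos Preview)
Your overall strategy coincides with the paper's: reduce the stability of the complex soliton to the stability of the zero solution for real-valued mKdV via the B\"acklund transformation, evolve the small real seed globally by Kenig--Ponce--Vega, transform back away from the singular times, and read off mass and energy conservation from explicit B\"acklund identities (these are exactly Corollaries~\ref{CLaw0} and~\ref{CLaw3} in the paper). Your anticipated hard step---uniform control of $\mathcal B$ near the singular times---is handled in the paper precisely as you expect: the constants in Propositions~\ref{IFT} and~\ref{IFT2} blow up as one approaches a $t_k$, which is why the statement only claims stability on $\{|t-t_k|\ge\ve_0\}$.

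There is, however, a genuine gap in your construction of the open set. Your $\mathcal B$ sends a ball of \emph{real-valued} $H^1$ data into $H^1(\R;\Com)$; since the domain has half the real dimension of the target, the image cannot be open in $H^1(\R;\Com)$ regardless of how regular $\mathcal B$ is, so ``local homeomorphism'' does not suffice. Two related points: first, the B\"acklund parameter is not fixed at $\sqrt c=\bt+i\al$ in the paper but is modulated to $m=\bt^*+i\al^*$, determined from the data by an integral (nullity) condition in Proposition~\ref{IFT}; this extra complex degree of freedom is essential for the invertibility and for selecting the correct soliton $Q^*$. Second, and more importantly, the paper does not obtain the admissible set by pushing real data forward. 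It goes the other way: starting from a complex perturbation $z_b^0$ of the soliton, Lemma~\ref{L2} produces a small seed $y_a^0$ together with $q^0$, and one then needs to know that $y_a^0$ is in fact real. This is the content of Corollary~\ref{RealY}, and its proof goes through the \emph{permutability theorem} and the breather structure (Section~\ref{6}): two successive B\"acklund transforms with complex-conjugate parameters commute, and comparing the two routes forces $y_a^0$ to be real. Without invoking this mechanism, the step ``set $w_0:=\mathcal B^{-1}(u_0)$, a small real datum'' for a generic complex $u_0$ near $Q_c$ is precisely the assertion that needs proof.
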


We cannot prove an all-time stability result using the $H^1(\R;\Com)$-norm because even complex solitons leave that space at each blow-up time, and several computations in this paper break down. However, the previous result states that the Cauchy problem is almost globally well-posed around a soliton, and the solution can be continued after  (or before) every blow-up time. The novelty with respect to the local Cauchy theory \cite{KPV} is that now it is possible to define an almost global solution instead of defining a local solution on each subinterval of time defined by two blow-up points, because from the proof we will recognize that the behavior before and after the blow-up time are deeply linked. From this property the existence and invariance of uniquely well-defined mass and energy will be quite natural. For this particular problem, we answer positively the questions about existence, uniqueness  and regularity after blow-up posed by Merle in \cite{Merle}. See Theorem \ref{T2a} and its corollaries for a 
more detailed statement. 

\medskip

We finally prove that breathers behaving as standard solitons are asymptotically stable in the energy space. For previous results for the soliton and multi-soliton case, see Pego-Weinstein \cite{PW} and Martel-Merle \cite{MMnon}.

\begin{thm}\label{T3}
Under the hypotheses of Theorem \ref{T1}, there exists $c_0>0$ depending on $\eta$, with $c_0(\eta) \to 0$ as $\eta\to 0$, such that the following holds. There exist $\bt^*$ and $\al^*$ close enough to $\bt$ and $\al$ respectively (depending on $\eta$), for which  
\be\label{AS1}
\lim_{t\to +\infty} \|u(t) - B(t;\cdot, \al^*,\bt^*, x_1(t),x_2(t))\|_{H^1(x\geq c_0 t)} =0. 
\ee
In particular, the asymptotic of the  solution  $u(t)$ has new and explicit velocity parameters $\delta^* = (\al^*)^2 -3(\bt^*)^2$ and $\ga^* = 3(\al^*)^2 -(\bt^*)^2$ at the main order.
\end{thm}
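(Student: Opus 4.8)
The plan is to combine the modulated decomposition supplied by Theorem~\ref{T1} with the B\"acklund framework, reducing the asymptotic behavior of the perturbed breather to a \emph{local energy decay} statement for a small, real-valued solution of \eqref{mKdV}, which is then transported back to the breather. First I would refine \eqref{Fn1} by letting the scalings vary, writing $u(t)=B(t;\al(t),\bt(t),x_1(t),x_2(t))+\epsilon(t)$ with $\epsilon(t)$ small in $H^1$ and subject to four orthogonality conditions fixing the modulation parameters. Since $u$ is close to a breather behaving as a soliton — that is, one moving to the right, with velocity $-\ga=\bt^2-3\al^2>0$ — one can choose $0<c_0<\bt^2-3\al^2$ (with $c_0\to0$ as $\eta\to0$) so that the breather remains inside the region $\{x\ge c_0t\}$ while the dispersive part of $\epsilon$ escapes it to the left.

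The core of the argument is to bypass a direct virial analysis on the oscillatory breather background, whose internal oscillation (governed by $y_1$) makes monotonicity delicate, and instead exploit the B\"acklund transformation relating breathers, complex solitons and small solutions. Applying the iterated transformation to $u(t)$ produces a real-valued solution $w(t)$ of \eqref{mKdV} that is small in $H^1$ uniformly in time, the smallness being inherited from \eqref{Fn1} through the Lipschitz continuity of the transformation. At the level of $w$ there is no solitonic object to carry mass rightward, so asymptotic stability reduces to showing $\|w(t)\|_{H^1(x\ge c_0t)}\to0$ as $t\to+\infty$. I would obtain this from an almost-monotonicity formula for the localized mass $I(t)=\int\psi\big(\tfrac{x-c_0t}{L}\big)\,w^2(t,x)\,dx$, with $\psi$ a smooth increasing cutoff and $L$ large: differentiating and using the equation produces a flux of the form $-3\int\psi' w_x^2-c_0\int\psi' w^2$ together with the errors $\int\psi''' w^2$ and $\tfrac32\int\psi' w^4$, the former controlled by $L^{-2}$ and the latter by $\|w\|_{L^\infty}^2\lesssim\eta^2$. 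Hence $I$ is almost decreasing with time-integrable flux, which forces $\int_0^{+\infty}\|w(t)\|_{H^1(\text{window})}^2\,dt<\infty$ and, after sliding the cutoff and using the uniform continuity of the flow, the desired decay on all of $\{x\ge c_0t\}$. Crucially, because the reduction removes any soliton component, no Liouville-type rigidity theorem is required — this is the main simplification afforded by the B\"acklund method.

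It then remains to transport the decay of $w$ back to $u$ and to identify the limiting parameters. As the B\"acklund transformation and its inverse are Lipschitz in $H^1$ and local up to exponentially small tails (the breather and complex-soliton profiles decay like $e^{-\bt|x|}$ off their centers), the smallness of $w$ on $\{x\ge c_0t\}$ yields $\|u(t)-B(t;\al(t),\bt(t),x_1(t),x_2(t))\|_{H^1(x\ge c_0t)}\to0$. The almost-conservation of the localized quantities, together with the boundedness of the modulation parameters, forces $\al(t)\to\al^*$ and $\bt(t)\to\bt^*$ for some $\al^*,\bt^*$ close to $\al,\bt$; replacing the running scalings by their limits costs only a quantity vanishing on the region as $t\to+\infty$, which gives \eqref{AS1}. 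The asymptotic velocity parameters are then read off as $\de^*=(\al^*)^2-3(\bt^*)^2$ and $\ga^*=3(\al^*)^2-(\bt^*)^2$.

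I expect the main obstacle to be this transport step. The B\"acklund transformation is \emph{nonlocal} — it is defined through a first-order system involving a spatial primitive — so one must control carefully how restriction to the moving region $\{x\ge c_0t\}$ interacts with the map, and rule out that its nonlocal tails reintroduce non-decaying contributions on the right. A secondary difficulty is the calibration of the cutoff speed $c_0$ and width $L$ so that the breather and complex-soliton profiles sit cleanly on one side of the transition window while the flux term keeps its favorable sign; this is precisely where the hypothesis that the breather moves to the right (negative energy, $\bt^2>3\al^2$) is indispensable.
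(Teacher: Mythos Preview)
Your overall strategy---pass to the small real-valued seed $w=y_a$ via the double B\"acklund transformation, prove local energy decay for $y_a$ by a Kato-type monotonicity argument, then transport the decay back through the two B\"acklund steps---is exactly the paper's. The monotonicity argument you sketch for $I(t)=\int\psi\,w^2$ is essentially the one carried out in Section~\ref{8} (the paper uses $\phi(x)=\tfrac{K}{\pi}\arctan(e^{x/K})$ and also treats the localized energy $J(t)$). Two points, however, diverge from the paper and deserve correction.

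First, your time-dependent modulation of the scalings $\al(t),\bt(t)$ is unnecessary and is not what the paper does. The parameters $\al^*,\bt^*$ are \emph{fixed once at $t=0$} by the B\"acklund construction itself: they arise from the complex parameter $m=\bt+i\al+q^0=\bt^*+i\al^*$ produced by Lemma~\ref{L2} (see \eqref{ABStar}), and that same $m$ is used at every later time in Lemmas~\ref{L3} and~\ref{44}. There is no convergence of modulation parameters to establish; the limiting breather $B^*$ is the one the B\"acklund machinery hands you from the start. Related to this, your added hypothesis $\bt^2>3\al^2$ (breather moving right) is not part of Theorem~\ref{T3}; the paper proves the statement under the hypotheses of Theorem~\ref{T1} only, and the sign of $\ga$ merely affects how interesting the conclusion is, not its validity.

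Second, and more importantly, the transport step---which you correctly flag as the main obstacle---is left as an appeal to ``Lipschitz and local up to exponential tails,'' and that is not enough: the B\"acklund map involves the primitive $\widetilde u_a+\widetilde u_b$, so abstract Lipschitz continuity on $H^1(\R)$ does not localize. The paper's mechanism is concrete: from \eqref{312} one extracts a \emph{linear first-order ODE in $x$} for $\widetilde z_c:=\widetilde y_a+\widetilde z_b$, namely
\[
(\widetilde z_c)_x + m\,\widetilde z_c = g,\qquad g = 2y_a + O(|\widetilde z_c|^2) + (\hbox{terms with } Q^*\hbox{-tails}),
\]
valid on $\{x>c_0 t/2\}$ where $Q^*_x/Q^*\approx -m$. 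Solving with the integrating factor $e^{mx}$ and applying Young's inequality gives $\|\widetilde z_c(t)\|_{L^2(x\ge c_0 t)}$ controlled by $\|y_a(t)\|_{L^2(x\ge c_0 t/2)}$ plus exponentially small terms; the same scheme handles $z_c,(z_c)_x$, yielding \eqref{Asy1}. One then repeats the identical ODE argument with \eqref{311} to pass from $z_b$ to $z_a=u-B^*$. This explicit integration is the missing idea in your proposal.
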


The previous result is more interesting when $\ga<0$, see \eqref{deltagamma}. In this case, the breather has negative energy (see \cite[p. 9]{AM}), and it moves rightwards in space (the so-called physically relevant region). Theorem \ref{T3} states that breathers almost clean the right portion of the real line. We recall that  working in the energy space implies that small solitons moving to the right in a very slow fashion are allowed (the condition $c_0>0$ is essential, see e.g. Martel-Merle \cite{MMnon}). Indeed, there are explicit solutions of \eqref{mKdV} composed by one breather and one very small soliton moving rightwards, that contradicts any sort of \emph{global} asymptotic stability result in the energy space \cite{La}. Additionally, we cannot ensure that the left portion  of the real line $\{x<0\}$ corresponds to radiation only. Following \cite{La}, it is possible to construct a solution to \eqref{mKdV} composed  by two breathers, one very small with respect to the other one, the latter 
with positive velocity, and the former with small but still negative velocity (just take the corresponding scaling parameters $\al$ and $\bt$ both small such that $-\ga<0$). Such a solution has no radiation at infinity.  Of course, working in a neighborhood of the breather using weighted spaces rules out such small perturbations.

\medskip

The mechanism under which $\al^*$ and $\bt^*$ are chosen is very natural and reflects the power and simplicity of the arguments of the proof: under different scaling parameters, it was impossible to describe the dynamics as in Theorem \ref{T1}. We are indeed under two linked results: in some sense Theorem \ref{T1} is a consequence of Theorem \ref{T3} and vice versa.

\medskip

On the other hand, the fact that no shifts in \eqref{AS1} are needed can be contrasted with the Martel-Merle computations in \cite{MMcol1}. In that paper they calculated the leading order of the shift perturbation for the small-large soliton collision in the mKdV case. It was found that such shifts are very small ($\sim \eta^2$) compared with the size of the perturbation ($\sim \eta$).

\medskip

Finally, concerning the portion of the mass not considered in \eqref{AS1}, we have the following characterization of inelasticity.

\begin{cor}\label{Ine}

Assume that $u_0$ in \eqref{In} is non trivial, i.e.,
\[
\ell_0:= \|  u_0 - B(0,\cdot;\al,\bt,0,0) \|_{H^1(\R)} >0.
\]
Then there exists $c_0>0$ independent of $\eta$ such that 
\[
\liminf_{t\to +\infty} \|u(t) - B(t;\cdot, \al^*,\bt^*, 0,0)\|_{H^1(\R)}  \geq   c_0 \ell_0.
\]
Moreover, we have 
\[
c_0 \ell_0 \leq |\bt^* -\bt| +|\al^* -\al| \leq \frac{1}{c_0} \ell_0.
\]
\end{cor}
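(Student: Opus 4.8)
The plan is to read off both displayed inequalities from three ingredients that are already available: the asymptotic description on the right half-line given by Theorem \ref{T3}, the conservation of the mass and energy functionals (globally well defined and time-invariant by Theorem \ref{T2}), and the fact that the breather family is \emph{non-degenerate} with respect to these two conserved quantities. Writing $M$ and $E$ for the mass and energy, I would first record that the map
\[
\Phi:(\al,\bt)\longmapsto \big(M[B(\cdot\,;\al,\bt,0,0)],\,E[B(\cdot\,;\al,\bt,0,0)]\big)
\]
is a $C^1$ local diffeomorphism near the fixed pair $(\al,\bt)$. This non-degeneracy is exactly the computation that underlies the variational characterization of breathers in \cite{AM}, and it lets one pass, in both directions and quantitatively, between differences of conserved quantities and differences of scaling parameters.

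For the upper bound $|\bt^*-\bt|+|\al^*-\al|\le \ell_0/c_0$ I would use orbital stability. Since $\|u_0-B(0,\cdot;\al,\bt,0,0)\|_{H^1}=\ell_0$, the functionals satisfy $|M[u_0]-M[B(\al,\bt)]|+|E[u_0]-E[B(\al,\bt)]|\lesssim \ell_0$ (the quadratic remainders are harmless). By Theorem \ref{T3} the solution is asymptotically, on $\{x\ge c_0t\}$, the breather $B(t;\al^*,\bt^*,x_1(t),x_2(t))$, whose mass and energy equal $M[B(\al^*,\bt^*)]$ and $E[B(\al^*,\bt^*)]$; combined with conservation this identifies $\Phi(\al^*,\bt^*)$ with $\Phi(\al,\bt)$ up to the mass and energy that have escaped to $\{x<c_0t\}$, itself $O(\ell_0)$. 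Applying $\Phi^{-1}$ then yields $|\bt^*-\bt|+|\al^*-\al|\lesssim \ell_0$.

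The delicate part -- and the real content of the corollary -- is the lower bound, which encodes the inelasticity. Here I would argue that the excess carried by a nontrivial perturbation cannot be completely reabsorbed into a breather of the \emph{original} scalings. On $\{x<c_0t\}$ the comparison profile $B(t;\al^*,\bt^*,0,0)$ is negligible, since its center travels with the breather velocity $-\ga^*$, which exceeds $c_0$ in the relevant regime; hence $\|u(t)-B(t;\al^*,\bt^*,0,0)\|_{H^1}$ is bounded below by the $H^1$ content of $u(t)$ left behind on $\{x<c_0t\}$ together with the parameter and shift mismatch detected on $\{x\ge c_0t\}$. I expect the cleanest route to be a contradiction argument: if both $|\al^*-\al|+|\bt^*-\bt|$ and the $\liminf$ were $o(\ell_0)$, then via $\Phi$ the asymptotic breather would carry the full mass and energy of $u_0$ up to $o(\ell_0)$, the region $\{x<c_0t\}$ would retain no mass or energy in the limit, and $u(t)$ would be asymptotic on the whole line to a breather with the original scalings. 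Running the modulation and B\"acklund machinery of Theorems \ref{T1}--\ref{T3} backward (from $t=+\infty$ toward $t=0$) should then force $u_0=B(0,\cdot;\al,\bt,0,0)$, i.e. $\ell_0=0$, contradicting non-triviality.

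Finally, the two displayed inequalities are two faces of one estimate: once the $\liminf$ is bounded below by $c_0\ell_0$, the asymptotic on $\{x\ge c_0t\}$ shows that this gap is controlled by $|\bt^*-\bt|+|\al^*-\al|$ up to a vanishing error, giving $c_0\ell_0\le|\bt^*-\bt|+|\al^*-\al|$, while the bound of the second paragraph closes the comparison. The main obstacle throughout is precisely this reverse inequality: every orbital- and asymptotic-stability statement in the paper is naturally an upper bound, so extracting a matching lower bound that is \emph{linear} in $\ell_0$ and has a constant independent of $\eta$ requires the rigidity and non-degeneracy of $\Phi$ rather than the perturbative estimates that drive Theorems \ref{T1} and \ref{T3}.
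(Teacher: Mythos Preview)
Your route via the conserved quantities of $u$ and the non-degeneracy of the map $\Phi:(\al,\bt)\mapsto (M[B],E[B])$ is \emph{not} the paper's argument, and there is a real gap in your contradiction step for the lower bound. The quantities $M[u]$ and $E[u]$ give you two scalar constraints; even granting that $\Phi$ is a local diffeomorphism, these constrain only the pair $(\al^*,\bt^*)$, not the full $H^1$-size of the perturbation. Concretely, if $\|z_a(\tilde T_0)\|_{H^1}=o(\ell_0)$, your tools yield $|M[u]-M[B(\al^*,\bt^*)]|+|E[u]-E[B(\al^*,\bt^*)]|=o(\ell_0)$ and hence $(\al^*,\bt^*)$ close to $\Phi^{-1}(M[u],E[u])$; they do \emph{not} bound $\|z_a^0\|_{H^1}$ from above. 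The sentence ``running the modulation and B\"acklund machinery backward \ldots\ should then force $u_0=B$'' is the step that does not follow: stability backward in time (mKdV is time-reversible) only gives $\|u_0 - B(0;\al^*,\bt^*,\tilde x_1,\tilde x_2)\|_{H^1}\leq A_0\cdot o(\ell_0)$ for \emph{some} shifts $\tilde x_1,\tilde x_2$, which need not coincide with $0,0$; so you cannot conclude $\ell_0=0$.

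The paper's proof is shorter and bypasses $\Phi$ entirely. The point is that the double B\"acklund transform built in Sections~\ref{3}--\ref{6} is \emph{bi-Lipschitz} between $z_a$ and the small real-valued seed $y_a$, at any fixed time $t\neq t_k$, with constants independent of $\eta$. Combined with the fact that $\|y_a(t)\|_{H^1}$ is essentially conserved (mass and energy of a small solution), this gives directly $\|z_a(t)\|_{H^1}\sim\|y_a(t)\|_{H^1}\sim\|y_a(0)\|_{H^1}\sim\|z_a^0\|_{H^1}=\ell_0$. The contradiction argument is then one line: if $\|z_a(\tilde T_0)\|_{H^1}<c_0\ell_0$, two applications of the B\"acklund transform give $\|y_a(\tilde T_0)\|_{H^1}<Cc_0\ell_0$, hence $\|y_a(0)\|_{H^1}<Cc_0\ell_0$, and two inverse B\"acklund transforms give $\ell_0=\|z_a^0\|_{H^1}<Cc_0\ell_0$, absurd for $c_0$ small. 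The upper bound $|\al^*-\al|+|\bt^*-\bt|\lesssim\ell_0$ likewise comes for free from the Implicit Function Theorem estimate $|q^0|\leq C\|z_a^0\|_{H^1}$ in Lemmas~\ref{L1}--\ref{L2}, with no appeal to $\Phi$.
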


\medskip

It is also important to emphasize that \eqref{mKdV} is a well-known \emph{completely integrable} model \cite{Kruskal,AC,La,LAX1,Sch}, with infinitely many conserved quantities, and a suitable Lax-pair formulation. The Inverse Scattering Theory has been applied in \cite{Sch} to describe the evolution of \emph{rapidly decaying} initial data, by purely algebraic methods. Solutions are shown to decompose into a very particular set of solutions: solitons, breathers and radiation. Moreover, as a consequence of the integrability property, these nonlinear modes interact elastically during the dynamics, and no dispersive effects are present at infinity. In particular, even more complex solutions are present, such as \emph{multi-solitons} (explicit solutions describing the interaction of several solitons \cite{HIROTA1}).  Multi-solitons for mKdV and several integrable models of Korteweg-de Vries type are stable in $H^1$, see Maddocks-Sachs \cite{MS} for the KdV case and in a more general setting see Martel-Merle-Tsai 
\cite{MMT}. 

\medskip

However, the proof of Theorem \ref{T1} does not involve any method relying on the Inverse Scattering transform \cite{Kruskal, Sch}, nor the steepest descent machinery \cite{Deift},\footnote{Note that in \cite{Deift} the authors consider the \emph{defocusing} mKdV equation, which has no smooth solitons and breathers.} which allows to work in the \emph{very large} energy space $H^1(\R)$. Note that  if the Inverse Scattering methods are allowed, one could describe the dynamics of very general initial data with more detail. But if this is the case, additional decay and/or spectral assumptions are always needed, and except by well-prepared initial data, such conditions are difficult to verify. We claim that our proof works even if the initial data is in $L^2(\R)$, provided mKdV is locally well-posed at that level of regularity, which remains a very difficult open problem. 

\medskip

Comparing with \cite{AM}, where we have proved that mKdV breathers are $H^2$-stable, now we are not allowed to use the \emph{third} conservation law associated to mKdV:\footnote{See \eqref{M1} and \eqref{E1} for the other two low-regularity conserved quantities.}
\[
F[u](t) = \frac 12 \int_\R u_{xx}^2(t,x)dx -\frac 52 \int_\R u^2 u_x^2(t,x)dx +\frac 14\int_\R u^6(t,x)dx,
\]
nor the elliptic equation satisfied by {\bf any} breather profile:
\[
B_{(4x)} -2(\bt^2 -\al^2) (B_{xx} + B^3)  +(\al^2 +\bt^2)^2 B + 5 BB_x^2 + 5B^2 B_{xx} + \frac 32 B^5 =0,
\]
since the dynamics is no longer in $H^2$. Moreover, since breathers are bound states, there is no associated decoupling in the dynamics as time evolves as in the Martel-Merle-Tsai paper \cite{MMT}, which makes the proof of the $H^1$ case even more difficult. We need a different method of proof. 

\medskip

In this paper we follow a method of proof that it is in the spirit of the seminal work by Merle and Vega \cite{MV} (see also Alejo-Mu\~noz-Vega \cite{AMV}), where the $L^2$-stability of KdV solitons has been proved. In those cases the use of the Miura and Gardner transformations were the new ingredients to prove stability where the standard energy is missing. Recently, the Miura transformation has been studied at very low regularity. Using this information, Buckmaster and Koch showed that KdV solitons are stable even under $H^{-1}\cap H^{-3/4}$ perturbations \cite{BK}. 

\medskip

More precisely, in this paper we will make use of the B\"acklund transformation \cite[p. 257]{La} associated to mKdV to obtain new conserved quantities, additional to the mass and energy. We point out the recent works by Mizumachi-Pelinovsky \cite{MP} and Hoffmann-Wayne \cite{HW}, where a similar approach was described for the NLS and sine-Gordon equations and their corresponding \emph{one-solitons}. However, unlike those previous works, and in order to control any breather, we use the B\"acklund transformation twice: one to control an associated complex-valued mKdV soliton, and a second one to get  almost complete control of the breather.

\medskip

Indeed, solving the B\"acklund transformation in a vicinity of a breather leads (formally) to the emergence of complex-valued mKdV solitons, which blow-up in finite time. A difficult problem arises at the level of the Cauchy theory, and any attempt to prove stability must face the ill-posedness behavior of the complex-valued mKdV equation \eqref{mKdV}. However, after a new use of the B\"acklund transformation around the complex soliton we end-up with a small, real-valued $H^1(\R)$ solution of mKdV, which is stable for all time. The fact that a second application of the B\"acklund transformation leads to a real-valued solution is not trivial and is a consequence of a deep property called \emph{permutability theorem} \cite{La}. Roughly speaking, that result states that the order under which we perform two inversions of the B\"acklund transformation does not matter.  After some work we are able to give a rigorous proof of the following fact: we can invert a breather using B\"acklund towards two 
particularly well chosen 
complex solitons first, and then invert once again to obtain two small solutions --say $a$ and $b$--, and the final result \emph{must be the same}. Even better, one can show that $a$ has to be the conjugate of $b$, which gives the real character of the solution. Now the dynamics is real-valued and simple. We use the Kenig-Ponce-Vega \cite{KPV} theory to evolve the system to any given time. Using this trick we avoid dealing with the blow-up times of the complex soliton --for a while-- and at the same time we prove a new stability result for them.

\medskip

However, unlike the previous results \cite{MP,HW}, we cannot invert the B\"acklund transformation at any given time, and in fact each blow-up time of the complex-valued mKdV soliton is a dangerous obstacle for the breather stability. In order to extend the stability property up to the blow-up times we discard the method involving  the B\"acklund transformation. Instead we run a bootstrap argument starting from a fixed time very close to each singular point, using the fact the real-valued mKdV dynamics is continuous in time. Finally, using energy methods related to the stability of single solitons we are able to extend the uniform bounds in time to any singularity point, with a universal constant $A_0$ as in Theorem \ref{T1}. 

\medskip

From the proof it will be evident that even if there is no global well-posedness theory (with uniform bounds in time) below $H^s$, $s<\frac 14$, one can prove stability of breathers in spaces of the form $H^1 \cap H^s$, $s<\frac 14$, following the ideas of Buckmaster and Koch \cite{BK}. We thank professor Herbert Koch for mentioning to us this interesting property. 

\medskip

Our results apply without important modifications to the case of the sine-Gordon (SG) equation in $\R_t \times \R_x $
\be\label{SG}
u_{tt} -u_{xx} + \sin u =0, \quad (u,u_t)(t,x) \in \R^2,
\ee
and its corresponding breather \cite[p. 149]{La}. See \cite{BMW,D,SW} for related results. Note that SG is globally well-posed in $L^2\times H^{-1}$; then we have that breathers are stable under small perturbations in that space. Since the proofs are very similar, and in order to make this paper non redundant, we skip the details.

\medskip

Moreover, following our proof it is possible to give a new proof of the global $H^1$-stability of two-solitons proved by Martel, Merle and Tsai in \cite{MMT}.

\medskip

We also claim that $k$-breathers ($k\geq 2$), namely solutions composed by $k$ different breathers are also $H^1$-stable. Following the proof of Theorem \ref{T1}, one can show by induction that  a $k$-breather can be obtained from a $(k-1)$-breather after two B\"acklund transformations using a fixed set of complex conjugate parameters, as in Lemmas \ref{Equal0} and \ref{Equal}. After proving this identity, the rest of the proof adapts with no deep modifications.

\medskip

This paper is organized as follows. In Section \ref{2} we introduce the complex-valued soliton profiles. Section \ref{3} is devoted to the study of the mKdV B\"acklund transformation in the vicinity of a given complex-valued mKdV solution. In Section \ref{4} we apply the previous results to prove Theorem \ref{T2} (see Theorem \ref{T2a}). Section \ref{5} deals with the relation between complex soliton profiles and breathers. In Section \ref{6} we apply the results from Section \ref{3} to the case of a perturbation of a breather solution. Finally, in Sections \ref{7} and \ref{8} we prove Theorems \ref{T1} and \ref{T3} and Corollary \ref{Ine}.

\medskip

\noindent
{\bf Acknowledgments.} The first author acknowledges the financial support from the University of the Basque Country-EHU. Part of this work was done while  the second author was L.E. Dickson Instructor at the University of Chicago. He  would like to express his gratitude with the members of the Department of Mathematics, in particular professors Carlos Kenig and Wilhelm Schlag. We also thank Herbert Koch and Yvan Martel for several enlightening  discussions which improved the quality of this paper, and a gap in the first version of this one. 

\bigskip

\section{Complex-valued mKdV soliton profiles}\label{2}
 
\medskip

First of all, we recall the well-known complex-valued mKdV profile.

\begin{defn}\label{Qab}
Consider parameters $\al,\bt>0$, $ x_1$ and $ x_2 \in \R$. We introduce the complex-valued kink profile
\[
\widetilde Q =\widetilde Q(x; \al,\bt, x_1, x_2), 
\]
defined as
\be\label{tQ}
\widetilde Q  :=  2\sqrt{2}  \arctan \big( e^{ \bt y_2 + i \al y_1 }\big),
\ee
where $y_1$ and $y_2$ are (re)defined as
\be\label{y1y2mod}
y_1 : = x+ x_1, \quad y_2 := x + x_2.
\ee
Note that
\be\label{tQinfty}
\widetilde Q(-\infty) = 0, \quad \widetilde Q(+\infty) = \sqrt{2}\pi. 
\ee   
We define the complex-valued soliton profile as follows:
\bea
Q &:= & \partial_x \widetilde Q \nonu \\
& =& \frac{2\sqrt{2}  (\bt+i\al) e^{ \bt y_2 + i \al y_1 } }{ 1+e^{ 2(\bt y_2 + i \al y_1 )}} \label{Q0}\\
& =& \sqrt{2}\frac{\bt  \cosh(\bt y_2) \cos(\al y_1) + \al  \sinh (\bt y_2) \sin(\al y_1)}{\cosh^2(\bt y_2)  -  \sin^2(\al y_1)} +  \nonu \\
& &  + i  \sqrt{2} \frac{\al  \cosh(\bt y_2) \cos(\al y_1) - \bt  \sinh (\bt y_2) \sin(\al y_1)}{\cosh^2(\bt y_2)-  \sin^2(\al y_1)}. \label{Q} 
\eea
Finally we denote
\be\label{tQt0} 
\widetilde Q_t :=   -(\bt+i\al)^2 Q,
\ee
and
\be\label{tQ12}
\widetilde Q_1 := \partial_{x_1} \widetilde Q, \quad \widetilde Q_2 := \partial_{x_2} \widetilde Q.
\ee
\end{defn}

Note that $Q$ is complex-valued and is pointwise convergent to the soliton $Q_{\bt^2}$ as $\al \to 0$.  A second condition satisfied by $\widetilde Q$ and $Q$ is the following periodicity property: for all $k\in \Z$,
\be\label{perio}
\begin{cases}
\displaystyle{\widetilde Q(x; \al,\bt, x_1 + \frac{k \pi}\al, x_2) =(-1)^k \widetilde Q(x; \al,\bt, x_1, x_2), }\\
\hbox{ \;}\\
\displaystyle{Q(x; \al,\bt, x_1 + \frac{k \pi}\al, x_2) =(-1)^k  Q(x; \al,\bt, x_1 , x_2).}
\end{cases}
\ee

\medskip

In what follows, we remark that $\widetilde Q$ and $Q$ may blow-up in finite time. 

\begin{lem}\label{Good}
Consider the complex-valued soliton profile defined in \eqref{tQ}-\eqref{Q}. Assume that 
\be\label{BadCondFF}
\hbox{for $ x_2$ fixed and some $k\in \Z$}, \quad x_{1} =  x_2  +\frac{\pi}{\al} \Big(k+\frac 12 \Big).
\ee
Then $\widetilde Q$ and $Q$ cannot be defined at $x= - x_2.$ Moreover, if $x_1 =x_2 =0$, we have $Q(\cdot; \al,\bt,0,0)\in H^1(\R;\Com)$.
\end{lem}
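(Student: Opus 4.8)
The plan is to locate the singularities of $Q$ and $\widetilde Q$ by analysing when the common denominator in \eqref{Q0}--\eqref{Q} vanishes, and then to check that under \eqref{BadCondFF} this occurs precisely at $x=-x_2$. I would work first with the compact form \eqref{Q0}: the only candidate singularity is the zero set of $1+e^{2(\bt y_2 + i\al y_1)}$. Setting this to zero forces $e^{2(\bt y_2 + i\al y_1)}=-1$; taking moduli gives $e^{2\bt y_2}=1$, hence $y_2=0$ (as $\bt>0$), and then $e^{2i\al y_1}=-1$, i.e. $\al y_1\in\pi(\Z+\tfrac12)$. Equivalently, from the real form \eqref{Q}, the denominator $\cosh^2(\bt y_2)-\sin^2(\al y_1)$ vanishes only if $\cosh^2(\bt y_2)=\sin^2(\al y_1)$; since $\cosh^2\ge 1\ge\sin^2$, both sides must equal $1$, giving again $y_2=0$ and $\sin^2(\al y_1)=1$. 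Substituting $x=-x_2$ (so $y_2=0$) and $y_1=x_1-x_2$, the condition $\al y_1\in\pi(\Z+\tfrac12)$ becomes exactly \eqref{BadCondFF}, so $Q$ cannot be defined at $x=-x_2$ under that hypothesis. That $\widetilde Q$ is also undefined there follows because the complex $\arctan$ has branch-point singularities exactly at $\pm i$, and $e^{\bt y_2+i\al y_1}=\pm i$ forces the same locus $\{y_2=0,\ \al y_1\in\pi(\Z+\tfrac12)\}$.

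For the $H^1$ claim I would set $x_1=x_2=0$, so that $y_1=y_2=x$ and the denominator is $\cosh^2(\bt x)-\sin^2(\al x)$. By the analysis above this vanishes only if $x=0$ \emph{and} $\sin^2(\al x)=1$; but $\sin^2(0)=0$, a contradiction, so the denominator stays bounded away from zero and $Q$ is smooth on $\R$. It then remains to control the decay. From \eqref{Q0}, as $x\to+\infty$ the denominator is dominated by $e^{2(\bt+i\al)x}$ and $Q\sim 2\sqrt2(\bt+i\al)e^{-(\bt+i\al)x}$, while as $x\to-\infty$ the numerator gives $Q\sim 2\sqrt2(\bt+i\al)e^{(\bt+i\al)x}$; in both regimes $|Q|\lesssim e^{-\bt|x|}$. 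The same expansion applied to $Q_x=\partial_x^2\widetilde Q$ yields $|Q_x|\lesssim e^{-\bt|x|}$, whence $Q,Q_x\in L^2(\R)$ and $Q\in H^1(\R;\Com)$.

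The explicit asymptotic constants are harmless bookkeeping; the only genuinely delicate point is the first step, namely arguing that the apparently two-parameter denominator can vanish only on the rigid locus $\{y_2=0,\ \al y_1\in\pi(\Z+\tfrac12)\}$. I expect this — the observation that $\cosh^2\ge 1\ge\sin^2$ pins both factors to their extremal values simultaneously — to be the crux, together with verifying that the branch structure of the complex $\arctan$ introduces no singularities of $\widetilde Q$ beyond those already carried by its derivative $Q$.
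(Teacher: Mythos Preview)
Your argument is correct and follows essentially the same route as the paper: identify the zero locus of the denominator $\cosh^2(\bt y_2)-\sin^2(\al y_1)$ (equivalently of $1+e^{2(\bt y_2+i\al y_1)}$), check that it is exactly $\{y_2=0,\ \al y_1\in\pi(\Z+\tfrac12)\}$, and observe that $x_1=x_2=0$ never meets this locus. Your treatment is in fact more complete than the paper's, which records only the forward implication and the impossibility of $k+\tfrac12=0$; you explicitly supply the converse (these are the \emph{only} singularities) and the exponential decay estimate $|Q|,|Q_x|\lesssim e^{-\bt|x|}$ needed to conclude $Q\in H^1(\R;\Com)$, both of which the paper leaves implicit.
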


\begin{rem}
We emphasize that, given $x_2$ fixed, the set of points $x_1$ of the form \eqref{BadCondFF} for some $k\in \Z$ is a countable set of real numbers with no limit points.
\end{rem}

\begin{proof}
Fix $ x_2 \in \R$. If \eqref{BadCondFF} is satisfied for some $k\in \Z$,  we have that at $x=- x_2,$
\[
y_1= x + x_1 = \frac{\pi}{\al} \Big(k+\frac 12\Big), \quad y_2 = x+ x_2=0,
\]
and
\be\label{shco}
\sinh(\bt y_2) =0, \quad \cos(\al y_1) =0. 
\ee
Therefore, under \eqref{BadCondFF}, we have from \eqref{tQ} and \eqref{Q} that  $\widetilde Q$ and $Q$ cannot be defined at $x=- x_2$. Finally, if $x_1=x_2=0$, we have
\[
k+\frac 12 =0, \quad k\in \Z,
\]
which is impossible.
\end{proof}

\begin{lem}
Fix  $\al,\bt>0$ and $x_1,x_2\in \R$ such that \eqref{BadCondFF} is not satisfied. Then  we have
\be\label{ecQ}
Q_{xx} - (\bt+i\al)^2 Q +Q^3 =0, \quad \hbox{for all } x\in \R,
\ee
and
\be\label{Qx2}
Q_x^2 -(\bt+i\al)^2 Q^2 + \frac 12Q^4 =0, \quad \hbox{for all } x\in \R.
\ee
Moreover, the previous identities can be extended to any $x_1,x_2\in \R$ by continuity.
\end{lem}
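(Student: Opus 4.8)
The plan is to prove \eqref{ecQ} by recognizing that $Q$ is, up to the substitution $c \mapsto (\bt+i\al)^2$, exactly the complex-analytic continuation of the real soliton profile $Q_c$ which satisfies the ODE \eqref{ecQc}. Concretely, recall from \eqref{Sol} that the real soliton admits the representation $Q_c(s)=2\sqrt{2}\,c^{1/2}\partial_s[\arctan(e^{c^{1/2}s})]$ with $c^{1/2}$ real, and it solves $Q_c''-cQ_c+Q_c^3=0$. Comparing with \eqref{tQ}--\eqref{Q0}, the profile $Q=\partial_x\widetilde Q$ is obtained by the formal replacement $c^{1/2}s \rightsquigarrow \bt y_2+i\al y_1$, i.e. by allowing $\sqrt{c}:=\bt+i\al$ as in \eqref{Qc}. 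Since \eqref{ecQc} is a polynomial identity in $Q_c$ and its derivatives valid on an open subset of the real parameter axis, and since $\widetilde Q$ is (away from the bad set \eqref{BadCondFF}) a holomorphic function of $z=\bt y_2+i\al y_1$, the identity persists under analytic continuation. The cleanest way to make this rigorous without waving hands is a direct verification, so I would not rely on the continuation heuristic alone but use it only to motivate the computation.

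First I would set $z:=\bt y_2+i\al y_1$ and note that, because $y_1=x+x_1$ and $y_2=x+x_2$ both satisfy $\partial_x y_j=1$, we have $\partial_x z=\bt+i\al$; write $\mu:=\bt+i\al$. From \eqref{tQ}, $\widetilde Q=2\sqrt2\,\arctan(e^{z})$, so $Q=\partial_x\widetilde Q=\mu\cdot 2\sqrt2\,\dfrac{e^{z}}{1+e^{2z}}$, which matches \eqref{Q0}. Differentiating the elementary function $g(z):=2\sqrt2\,\dfrac{e^{z}}{1+e^{2z}}=\sqrt2\,\operatorname{sech}(z)$ one checks the scalar ODE $g''(z)-g(z)+\tfrac12 g(z)^3=0$, which is just \eqref{ecQc} for the normalized profile $Q=\sqrt2\,\operatorname{sech}$ at $c=1$. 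Since $Q=\mu\,g(z)$ and $\partial_x=\mu\,\partial_z$, the chain rule gives $Q_{xx}=\mu^3 g''(z)$, and substituting $g''=g-\tfrac12 g^3$ together with $Q^3=\mu^3 g^3$ and $\mu^2 Q=\mu^3 g$ yields
\be\label{verify}
Q_{xx}-\mu^2 Q+Q^3=\mu^3\Big(g''(z)-g(z)+\tfrac12 g(z)^3\Big)+\tfrac12\mu^3 g^3-\tfrac12\mu^3 g^3=0,
\ee
which is precisely \eqref{ecQ} since $\mu^2=(\bt+i\al)^2$. For the first-order identity \eqref{Qx2} I would proceed analogously: the scalar profile satisfies the first integral $g'(z)^2-g(z)^2+\tfrac12 g(z)^4=0$ (obtained by multiplying the second-order ODE by $g'$ and integrating, using $g,g'\to0$ at $+\infty$ along the real $z$-axis, then extending by analyticity). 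Multiplying through by $\mu^2$ and using $Q_x=\mu g'(z)$, $Q=\mu g(z)$ gives $Q_x^2-\mu^2 Q^2+\tfrac12 Q^4=\mu^2(g'^2-g^2+\tfrac12 g^4)=0$, which is \eqref{Qx2}.

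The only genuine subtlety—and the step I expect to be the main obstacle—is justifying the pointwise validity on all of $\R$ given the complex, potentially singular denominator in \eqref{Q}. The issue is that $g(z)=\sqrt2\,\operatorname{sech}(z)$ has poles where $\cosh(z)=0$, i.e. where $\cosh^2(\bt y_2)=\sin^2(\al y_1)$, and one must confirm that, away from the configuration \eqref{BadCondFF}, the curve $x\mapsto z(x)$ never meets these poles, so that $g$ and all its derivatives are genuine holomorphic functions along the line and the chain-rule manipulations are legitimate. This is exactly the content encoded in Lemma \ref{Good}: the denominator $\cosh^2(\bt y_2)-\sin^2(\al y_1)$ vanishes only when simultaneously $\sinh(\bt y_2)=0$ and $\cos(\al y_1)=0$ as in \eqref{shco}, which for fixed $x_2$ forces \eqref{BadCondFF}. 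Hence under the standing hypothesis that \eqref{BadCondFF} fails, the denominator is nonvanishing for every $x\in\R$, the identities hold pointwise there, and the final sentence of the lemma—extension to arbitrary $x_1,x_2$ by continuity—follows because both sides of \eqref{ecQ} and \eqref{Qx2} are continuous (indeed real-analytic) in the parameters $(x_1,x_2)$ wherever defined, and the complement of the bad set is dense.
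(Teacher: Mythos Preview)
Your approach---reducing to the scalar identity for $g(z)=\sqrt{2}\operatorname{sech}(z)$ via the substitution $z=\bt y_2+i\al y_1$ with $\partial_x z=\mu=\bt+i\al$---is exactly the ``direct from the definition'' argument the paper has in mind, and the discussion of why the denominator does not vanish under the hypothesis is correct and more explicit than what the paper writes. However, two arithmetic slips make your displayed verification false as written. First, the normalized profile satisfies $g''-g+g^3=0$, not $g''-g+\tfrac12 g^3=0$: with $g=\sqrt{2}\operatorname{sech}$ one has $g''=g-g^3$, so the $\tfrac12$ is spurious and your equation \eqref{verify} does not balance. Second, since $Q=\mu\,g(z)$ and $\partial_x=\mu\,\partial_z$, one has $Q_x=\mu^2 g'(z)$ (not $\mu g'$) and $Q_{xx}=\mu^3 g''(z)$; the correct bookkeeping is then simply
\[
Q_{xx}-\mu^2 Q+Q^3=\mu^3\big(g''-g+g^3\big)=0,\qquad
Q_x^2-\mu^2 Q^2+\tfrac12 Q^4=\mu^4\big((g')^2-g^2+\tfrac12 g^4\big)=0,
\]
which gives both identities at once. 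With these two fixes your argument is complete.
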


\begin{proof}
Direct from the definition.
\end{proof}

Assume that \eqref{BadCondFF} does not hold. Consider the $\sin$ and $\cos$ functions applied to complex numbers. We have from \eqref{tQ} and \eqref{Q0},
\bee
 \sin \Big(\frac{\widetilde Q}{\sqrt{2}}\Big) &=& \sin (2 \arctan e^{\bt y_2 + i\al y_1}) \\
 &=&2e^{\bt y_2 + i\al y_1} \cos^2(\arctan e^{\bt y_2 + i\al y_1})\\
 & =& \frac{ 2e^{\bt y_2 + i\al y_1} }{1+ e^{2(\bt y_2 + i\al y_1)}}=  \frac 1{\bt+i\al}\frac Q{\sqrt{2}}.
\eee
Similarly, from this identity we have
\[
Q_x - (\bt + i\al) \cos \Big( \frac{\widetilde Q}{\sqrt{2}}\Big) Q=0,
\]
so that from \eqref{tQt0} and \eqref{Qx2},
\bee
&  & \widetilde Q_t + (\bt+ i\al)  \Big[  Q_x  \cos \Big(\frac{\widetilde Q}{\sqrt{2}} \Big)   + \frac { Q^2}{\sqrt{2}}\sin \Big( \frac{\widetilde Q}{\sqrt{2}} \Big) \Big]\\
& & \qquad = -(\bt + i\al)^2Q  +    Q_x^2 Q^{-1}   +  \frac 12Q^3  = 0.
\eee
So far, we have proved the following result.

\begin{lem}\label{Equal0}
Let $Q$ be a complex-valued soliton profile with scaling parameters $\al,\bt > 0$ and shifts $x_1,x_2\in \R$, such that \eqref{BadCondFF} is not satisfied. Then we have
\be\label{zero0}
\frac Q{\sqrt{2}}  - (\bt + i\al) \sin \Big(\frac{\widetilde Q}{\sqrt{2}}\Big) \equiv 0 ,
\ee
and
\be\label{zero1}
 \widetilde Q_t    + (\bt+ i\al)  \Big[  Q_x  \cos \Big(\frac{\widetilde Q}{\sqrt{2}} \Big)   + \frac { Q^2}{\sqrt{2}}\sin \Big( \frac{\widetilde Q}{\sqrt{2}} \Big) \Big]  \equiv 0,
\ee
where $\sin z$ and $\cos z$  are defined on the complex plane in the usual sense. 
\end{lem}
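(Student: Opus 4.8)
The plan is to verify both identities by direct computation, exploiting the exponential representation \eqref{Q0} of $Q$ together with the defining formula \eqref{tQ} for $\widetilde Q$, and treating $\sin z$ and $\cos z$ as entire functions on $\Com$. The cleanest route uses the standard half-angle identity for $\arctan$: since $\widetilde Q/\sqrt 2 = 2\arctan(w)$ with $w := e^{\bt y_2 + i\al y_1}$, I would apply the double-angle formula $\sin(2\arctan w) = \frac{2w}{1+w^2}$, which holds as an identity between meromorphic functions on $\Com$ wherever $1+w^2 \neq 0$. This immediately yields
\[
\sin\Big(\frac{\widetilde Q}{\sqrt 2}\Big) = \frac{2w}{1+w^2} = \frac{1}{\bt + i\al}\frac{Q}{\sqrt 2},
\]
where the last equality is just \eqref{Q0}. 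Rearranging gives \eqref{zero0}. The only care needed is that \eqref{BadCondFF} is excluded, which guarantees $1+w^2 = 1 + e^{2(\bt y_2 + i\al y_1)} \neq 0$ at the relevant points, so that $Q$ and all the trigonometric expressions are well-defined; this is precisely the content of Lemma \ref{Good}.

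For \eqref{zero1}, I would first differentiate \eqref{zero0} in $x$. Since $\partial_x(\widetilde Q/\sqrt 2) = Q_x/\sqrt 2 \cdot \sqrt 2^{-1}\cdots$ — more precisely $\partial_x \widetilde Q = Q$ by definition \eqref{Q0}, so $\partial_x(\widetilde Q/\sqrt 2) = Q/\sqrt 2$ — differentiating the identity $Q/\sqrt 2 = (\bt+i\al)\sin(\widetilde Q/\sqrt 2)$ produces
\[
\frac{Q_x}{\sqrt 2} = (\bt + i\al)\cos\Big(\frac{\widetilde Q}{\sqrt 2}\Big)\,\frac{Q}{\sqrt 2},
\]
i.e. the relation $Q_x = (\bt+i\al)\cos(\widetilde Q/\sqrt 2)\,Q$ already recorded in the excerpt. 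I would then substitute this into the bracketed expression in \eqref{zero1}: the term $Q_x \cos(\widetilde Q/\sqrt 2)$ becomes $(\bt+i\al)\cos^2(\widetilde Q/\sqrt 2)\,Q$, and using \eqref{zero0} to write $\sin(\widetilde Q/\sqrt 2) = Q/(\sqrt 2(\bt+i\al))$ converts the second term $\frac{Q^2}{\sqrt 2}\sin(\widetilde Q/\sqrt 2)$ into $\frac{Q^3}{2(\bt+i\al)}$. Combining via $\cos^2 = 1 - \sin^2$ and using \eqref{tQt0}, the left side collapses to $-(\bt+i\al)^2 Q + Q_x^2 Q^{-1} + \tfrac12 Q^3$, which vanishes by the quadratic identity \eqref{Qx2}. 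This is exactly the chain of equalities displayed just before the lemma in the excerpt, so the work is essentially to certify that each step is a valid identity of complex analytic functions.

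The main obstacle, such as it is, is bookkeeping rather than conceptual: one must ensure that all the manipulations — in particular the use of $\cos^2 + \sin^2 = 1$, the half-angle formula, and division by $Q$ — are legitimate on the full real line $x \in \R$. Division by $Q$ is the delicate point, since $Q$ may have isolated zeros; however, \eqref{zero1} only contains $Q_x^2 Q^{-1}$ as an intermediate expression, and the final claimed identity \eqref{zero1} is a relation among $\widetilde Q_t$, $Q_x$, $Q^2$ and the entire functions $\cos,\sin$ applied to $\widetilde Q/\sqrt 2$, all of which are genuinely analytic where \eqref{BadCondFF} fails. Thus I would carry out the computation at points where $Q \neq 0$ and then invoke continuity (analyticity) of both sides to extend the identity to the isolated zeros of $Q$, exactly as the accompanying lemma extends \eqref{ecQ}–\eqref{Qx2} "to any $x_1,x_2 \in \R$ by continuity." Since everything reduces to algebraic identities between well-defined analytic functions, no genuine analytic difficulty arises, and the proof is indeed as short as the one-line "Direct from the definition" style used in the neighboring lemmas.
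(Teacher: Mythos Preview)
Your proposal is correct and follows essentially the same approach as the paper: the paper proves this lemma by the computation displayed immediately before its statement, using the half-angle identity $\sin(2\arctan w)=\tfrac{2w}{1+w^2}$ to get \eqref{zero0}, then differentiating to obtain $Q_x=(\bt+i\al)\cos(\widetilde Q/\sqrt2)\,Q$ and combining with \eqref{tQt0} and \eqref{Qx2} to get \eqref{zero1}. Your discussion of the division-by-$Q$ issue is a reasonable extra care, though one can also sidestep it by writing $Q_x^2 Q^{-1}=(\bt+i\al)\cos(\widetilde Q/\sqrt2)\,Q_x$ directly from the differentiated identity.
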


We finish with a simple computational lemma.

\begin{lem}
Fix $x_1,x_2$ such that \eqref{BadCondFF} is not satisfied. Then, for all $\al,\bt \neq 0$  we have
\be\label{N1}
\mathcal N := \frac12\int_{-\infty}^x Q^2 =\frac{ 2(\bt+i\al) \, e^{ 2(\bt y_2 + i\al y_1 )} }{1+ e^{2(\bt y_2 + i\al y_1)} }, 
\ee
and  
\be\label{MassQ}
\frac 12  \int_\R Q^2 = 2(\bt+i\al),
\ee
no matter what are  $x_1, x_2$.
Finally, if $L_1:= \log(1+ e^{2(\bt x_2 + i\al x_1)})$,
\be\label{LogL}
\int_{0}^x \mathcal N =  \log (1+ e^{2(\bt y_2 + i\al y_1) }) - L_1. 
\ee
Note that the previous formula is well-defined since $x_1$ and $x_2$ do not satisfy \eqref{BadCondFF}.
\end{lem}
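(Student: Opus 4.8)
The plan is to reduce all three identities to a single change of variable together with a guessed antiderivative. I would introduce the complex phase $w := \bt y_2 + i\al y_1$, so that $w = (\bt+i\al)x + (\bt x_2 + i\al x_1)$ and hence $w' = \bt + i\al$ is constant in $x$. With this notation \eqref{Q0} reads $Q = 2\sqrt{2}(\bt+i\al)e^{w}/(1+e^{2w})$, and squaring gives $\tfrac12 Q^2 = 4(\bt+i\al)^2 e^{2w}/(1+e^{2w})^2$. The hypothesis that \eqref{BadCondFF} fails for every $k\in\Z$ guarantees, via the analysis in Lemma \ref{Good}, that the denominator $1+e^{2w}$ never vanishes on $\R$, so that $Q$ is smooth and the integrals below are over a continuous integrand.

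For \eqref{N1} I would guess the antiderivative $F(x) := 2(\bt+i\al)e^{2w}/(1+e^{2w})$ and verify directly that $F' = \tfrac12 Q^2$: writing $u = e^{2w}$ with $u' = 2(\bt+i\al)u$, one has $(u/(1+u))' = u'/(1+u)^2$, which reproduces exactly the expression for $\tfrac12 Q^2$ above. Since $\bt>0$, the real part $2\bt y_2 \to -\infty$ as $x\to -\infty$, so $e^{2w}\to 0$ and $F(-\infty)=0$; integrating $F'$ from $-\infty$ to $x$ then yields $\mathcal N = F(x)$, which is \eqref{N1}. For \eqref{MassQ} I would instead let $x\to+\infty$: now $2\bt y_2\to+\infty$, so $e^{2w}\to\infty$ and $e^{2w}/(1+e^{2w})\to 1$, giving $F(+\infty)=2(\bt+i\al)$; hence $\tfrac12\int_\R Q^2 = F(+\infty)-F(-\infty)=2(\bt+i\al)$. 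Both limits depend only on the sign of the real part $\bt y_2$ and not on the shifts, which explains the assertion that the value is independent of $x_1,x_2$.

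For \eqref{LogL} the key observation is that $\mathcal N$ is itself an exact derivative: setting $G(x):=\log(1+e^{2w})$ one computes $G' = 2w'e^{2w}/(1+e^{2w}) = 2(\bt+i\al)e^{2w}/(1+e^{2w}) = \mathcal N$, using \eqref{N1}. Integrating from $0$ to $x$ gives $\int_0^x \mathcal N = G(x)-G(0) = \log(1+e^{2(\bt y_2+i\al y_1)}) - L_1$, since $w(0)=\bt x_2 + i\al x_1$ so that $G(0)=L_1$.

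The only genuinely delicate point is the meaning of the complex logarithm in $G$. Because \eqref{BadCondFF} is not satisfied, $1+e^{2w(x)}$ stays away from $0$ for all real $x$, so one can fix a continuous branch of $\log(1+e^{2w})$ along the real line; this continuous branch is precisely the antiderivative produced by $\int_0^x \mathcal N$, and \eqref{LogL} is to be read with that branch, which is exactly why the formula is well-defined. This is the step I expect to require the most care, since an arbitrary principal-value choice of $\log$ need not remain continuous across the range of values taken by $1+e^{2w}$; everything else is a routine verification of the guessed antiderivatives.
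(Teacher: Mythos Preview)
Your proof is correct and follows essentially the same approach as the paper: verify the proposed antiderivatives by differentiation and check the boundary behavior via the limits of $e^{2w}$ as $x\to\pm\infty$. Your treatment is in fact more explicit than the paper's (which simply says the identities are ``not difficult'' and ``easy to check''), and your remark about fixing a continuous branch of the complex logarithm along the real line is a welcome clarification that the paper leaves implicit.
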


\begin{proof}
It is not difficult to check that \eqref{N1} is satisfied. Note that 
\[
\lim_{x\to -\infty}\abs{\frac{ 2(\bt+i\al) \, e^{ 2(\bt y_2 + i\al y_1 )} }{1+ e^{2(\bt y_2 + i\al y_1)} }} =0.
\]
Identity \eqref{MassQ} is a consequence of the fact that
\[
\lim_{x\to + \infty} \frac{ 2(\bt+i\al) \, e^{ 2(\bt y_2 + i\al y_1 )} }{1+ e^{2(\bt y_2 + i\al y_1)} } = 2(\bt+i\al) .
\]
Finally, \eqref{LogL} is easy to check.
\end{proof}

\bigskip

\section{B\"acklund transformation for mKdV}\label{3}

\medskip

The previous properties (i.e., Lemma \ref{Equal0}) are consequence of a deeper result. In what follows, we fix a primitive $\tilde f$ of $f$, i.e., 
\be\label{tildeN}
\widetilde f_x := f,
\ee
where $f$ is assumed only in $L^2(\R)$. Notice that even if $f =f(t,x)$ is a solution of mKdV, then a corresponding term $\widetilde f(t,x)$ may be unbounded in space. 
\begin{defn}[See e.g. \cite{La}]
Let
\[
(u_a,u_b,v_a,v_b,m )\in H^1(\R;\Com)^2 \times H^{-1}(\R;\Com)^2 \times \Com.
\]
We set
\[
G: =(G_1,G_2), \quad G=G(u_a,u_b,v_a,v_b,m),
\]
where 
\be\label{G1}
 G_1(u_a,u_b,v_a,v_b,m) := \frac{(u_a-u_b)}{\sqrt{2}}  - m \sin \Big(\frac{\widetilde u_a+\widetilde u_b}{\sqrt{2}}\Big),
\ee
and
\bea\label{G2}
& & G_2(u_a,u_b,v_a,v_b,m)   :=   v_a-v_b   \nonu\\
& &  \qquad + \, m \,  \Big[ ( (u_a)_{x} +  (u_b)_{x}) \cos \Big(\frac{\widetilde u_a+\widetilde u_b}{\sqrt{2}} \Big)   + \frac { (u_a^2 + u_b^2)}{\sqrt{2}}\sin \Big( \frac{\widetilde u_a+\widetilde u_b}{\sqrt{2}} \Big) \Big] .
\eea
\end{defn}

For the moment we do not specify the  space where $G(u_a,u_b,  v_a, v_b,m)$ takes place if  $(u_a,u_b,v_a,v_b,m )\in H^1(\R;\Com)^2 \times H^{-1}(\R;\Com)^2 \times \Com$. However, thanks to Lemma \ref{Equal0}, we have the following result.

\begin{lem}
Assume that $x_1$ and $x_2$ do not satisfy \eqref{BadCondFF}. We have
\[
G( Q, 0,\widetilde Q_t, 0, \bt+i\al) \equiv (0,0).
\]
\end{lem}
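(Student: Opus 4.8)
The plan is to show that this identity is nothing more than a restatement of Lemma \ref{Equal0}, obtained by substituting the prescribed arguments into the definitions \eqref{G1} and \eqref{G2}. First I would record the effect of the substitution on the primitives entering those formulas: taking $u_b = 0$ forces the choice $\widetilde u_b = 0$ for its primitive, while for $u_a = Q$ the primitive is exactly $\widetilde u_a = \widetilde Q$, since $Q = \partial_x \widetilde Q$ by \eqref{Q0}. Consequently $\widetilde u_a + \widetilde u_b = \widetilde Q$ throughout, so the trigonometric arguments $\frac{\widetilde u_a + \widetilde u_b}{\sqrt 2}$ collapse to $\frac{\widetilde Q}{\sqrt 2}$. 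Together with $v_b = 0$, $(u_b)_x = 0$ and $u_b^2 = 0$, this makes the substitution completely explicit.

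With this in hand, the first component becomes
\[
G_1(Q, 0, \widetilde Q_t, 0, \bt + i\al) = \frac{Q}{\sqrt 2} - (\bt + i\al)\sin\Big(\frac{\widetilde Q}{\sqrt 2}\Big),
\]
which is precisely the left-hand side of \eqref{zero0}, and hence vanishes identically by Lemma \ref{Equal0}. Likewise, the second component reduces to
\[
G_2(Q, 0, \widetilde Q_t, 0, \bt + i\al) = \widetilde Q_t + (\bt + i\al)\Big[ Q_x \cos\Big(\frac{\widetilde Q}{\sqrt 2}\Big) + \frac{Q^2}{\sqrt 2}\sin\Big(\frac{\widetilde Q}{\sqrt 2}\Big)\Big],
\]
which is exactly the left-hand side of \eqref{zero1}, and therefore also vanishes by Lemma \ref{Equal0}. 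This yields $G(Q, 0, \widetilde Q_t, 0, \bt + i\al) \equiv (0, 0)$.

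The only point requiring genuine care — rather than a true obstacle — is the bookkeeping of primitives: one must fix the constant of integration so that $\widetilde u_b = 0$ when $u_b = 0$, and use the specific primitive $\widetilde Q$ from \eqref{tQ} associated with $Q$, since it is exactly this $\widetilde Q$ for which the identities \eqref{zero0} and \eqref{zero1} were established. The hypothesis that $x_1, x_2$ do not satisfy \eqref{BadCondFF} guarantees, via Lemma \ref{Good}, that $Q$, $\widetilde Q$ and hence all the expressions above are well-defined pointwise on $\R$, so the identity holds everywhere and the proof is complete.
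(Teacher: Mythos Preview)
Your proof is correct and follows exactly the approach the paper intends: the lemma is stated immediately after the sentence ``thanks to Lemma \ref{Equal0}, we have the following result'' and is given no further proof, so the substitution of $(Q,0,\widetilde Q_t,0,\bt+i\al)$ into \eqref{G1}--\eqref{G2} and the reduction to \eqref{zero0}--\eqref{zero1} is precisely what is meant. Your remark on fixing the primitives $\widetilde u_a=\widetilde Q$ and $\widetilde u_b=0$ is the only bookkeeping needed, and you handle it correctly.
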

Note that the previous identity can be extended by zero to the case where $x_1$ and $x_2$ satisfy \eqref{BadCondFF}, in such a form that now $G( Q, 0,\widetilde Q_t, 0, \bt+i\al)$, as a function of $(x_1,x_2) \in \R^2$, is now well-defined and continuous everywhere.

\medskip

In what follows we consider the invertibility of the B\"acklund transformation on complex-valued functions. See \cite{HW} for the statement involving the real-valued solitons in the sine-Gordon case and \cite{MP} for the case of nonlinear Schr\"odinger solitons.

\begin{prop}\label{IFT}
 Let $X^0:= (u_a^0,u_b^0, v_a^0,v_b^0,m^0) \in H^1(\R; \Com)^2 \times H^{-1}(\R;\Com)^2 \times \Com$ be such that
 \be\label{rem0}
 \re m^0 >0,
 \ee
 
\be\label{Conds11}
G(X^0)=(0,0),    
\ee

\be\label{Conds12}
 \sin\Big( \frac{\widetilde u_a^0 + \widetilde u_b^0}{\sqrt{2}}\Big) \in H^1(\R;\Com),
\ee
and

\be\label{Conds13}
\lim_{-\infty} \ ( \widetilde u_a^0 + \widetilde u_b^0) =0, \quad   \lim_{+\infty}\ (\widetilde u_a^0 + \widetilde u_b^0) =\sqrt{2} \pi.
\ee
Assume additionally that the ODE
\be\label{Mu0}
\mu^0_x-  m^0 \cos \Big(\frac{\widetilde u_a^0+\widetilde u_b^0}{\sqrt{2}}\Big) \mu^0 =0 ,
\ee
has a {\bf smooth} solution  $\mu^0=\mu^0(x)\in \Com$ satisfying 
\be\label{Cond110}
 \mu^0 \in H^1(\R;\Com), \quad |\mu^0(x)|>0, \quad \abs{\frac{\mu^0_x(x)}{\mu^0(x)}} \leq C, 
\ee
and
\be\label{Cond111}
\int_\R \sin \Big(\frac{\widetilde u_a^0+\widetilde u_b^0}{\sqrt{2}}\Big)\mu^0 \neq 0.
\ee

\smallskip

\noindent
Then there exist $\nu_0>0$ and $C>0$ such that the following is satisfied. For any $0<\nu<\nu_0$ and any $(u_a,v_a)\in H^1(\R; \Com) \times H^{-1}(\R;\Com)$ satisfying 

\be\label{ceroCond}
\|u_a-u_a^0\|_{H^1(\R;\Com)}  <\nu,
\ee
\smallskip

\noindent
$G$ is well-defined in a neighborhood of $X^0$ and there exists an unique $(u_b,v_b,m)$ defined in an {\bf open} subset of $ H^1(\R,\Com)\times H^{-1}(\R;\Com) \times \Com$ such that 

\be\label{cero}
G(u_a,u_b,v_a,v_b,m) \equiv (0,0),
\ee

\be\label{uno}
\|\widetilde u_a  +\widetilde u_b - \widetilde u_a^0  - \widetilde u_b^0\|_{H^2(\R;\Com)} \leq C\nu,
\ee

\be\label{dos}
\|u_b -u_b^0\|_{H^1(\R;\Com)}  + |m-m^0|< C\nu,
\ee

\be\label{tres}
 \sin\Big( \frac{\widetilde u_a + \widetilde u_b}{\sqrt{2}}\Big) \in H^1(\R;\Com),
\ee
and
\be\label{lim0}
\lim_{- \infty}  \, (\widetilde u_a+\widetilde u_b ) =0, \quad  \lim_{+ \infty}  \, (\widetilde u_a+\widetilde u_b ) = \sqrt{2}\pi.
\ee

\end{prop}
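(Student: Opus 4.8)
The plan is to read $G(X)=(0,0)$ not as a genuine five-variable system but, after eliminating the redundant variable $v_b$, as a single scalar first-order ODE carrying \emph{two} boundary conditions, which therefore selects the complex parameter $m$ by a shooting/solvability mechanism. First I would solve $G_2=0$ for $v_b$: since \eqref{G2} is affine in $v_b$, it gives explicitly
\[
v_b = v_a + m\Big[ ((u_a)_x+(u_b)_x)\cos\Big(\tfrac{\widetilde u_a+\widetilde u_b}{\sqrt 2}\Big) + \tfrac{u_a^2+u_b^2}{\sqrt 2}\sin\Big(\tfrac{\widetilde u_a+\widetilde u_b}{\sqrt 2}\Big)\Big],
\]
which lies in $H^{-1}(\R;\Com)$ near the base point because $(u_a)_x,(u_b)_x\in L^2$, $u_a^2,u_b^2\in H^1$, and the complex-valued $\cos,\sin$ factors are bounded there. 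Thus $v_b$ and its part of the estimate \eqref{dos} are automatic once $(u_b,m)$ are known, and the whole problem collapses onto $G_1=0$.

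Next I would set $w:=\widetilde u_a+\widetilde u_b$, so that $u_b=w_x-u_a$, and rewrite $G_1=0$ in \eqref{G1} as the scalar ODE
\[
w_x = 2u_a - \sqrt 2\, m\,\sin\Big(\tfrac{w}{\sqrt 2}\Big), \qquad w(-\infty)=0,\quad w(+\infty)=\sqrt 2\pi,
\]
the boundary data coming from \eqref{lim0}. The base point furnishes the exact solution $w^0=\widetilde u_a^0+\widetilde u_b^0$ at $m=m^0$ (this is precisely identity \eqref{zero0} in the soliton case). Writing $w=w^0+\phi$ with $\phi\in H^2(\R;\Com)$ builds in $w(-\infty)=0$ and, since $H^2$ functions vanish at $\pm\infty$, also $w(+\infty)=\sqrt 2\pi$ for free; what remains is to solve the ODE residual for $(\phi,m)$ in $H^2(\R;\Com)\times\Com$ as a function of $u_a$, by a quantitative implicit function theorem at $(u_a^0,w^0,m^0)$.

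The crux is the invertibility of the linearization. Linearizing the ODE gives for $\phi$ the equation
\[
\phi_x + m^0\cos\Big(\tfrac{w^0}{\sqrt 2}\Big)\phi = 2\,\delta u_a - \sqrt 2\,\delta m\,\sin\Big(\tfrac{w^0}{\sqrt 2}\Big),
\]
and here the function $\mu^0$ of \eqref{Mu0} is exactly the integrating factor, since \eqref{Mu0} reads $\mu^0_x=m^0\cos(w^0/\sqrt 2)\mu^0$, whence $(\mu^0\phi)_x=\mu^0(\phi_x+m^0\cos(w^0/\sqrt 2)\phi)$. Because $\mu^0\in H^1$ vanishes at $\pm\infty$, demanding a solution $\phi$ that also vanishes at both ends forces the Fredholm solvability condition $\int_\R\mu^0\cdot(\text{source})=0$, i.e.
\[
\delta m = \frac{\sqrt 2\int_\R \mu^0\,\delta u_a}{\int_\R \mu^0\,\sin(w^0/\sqrt 2)}.
\]
This is well-defined, and the combined map $(\phi,\delta m)\mapsto \phi_x+m^0\cos(w^0/\sqrt2)\phi+\sqrt2\,\delta m\,\sin(w^0/\sqrt2)$ is an isomorphism $H^2\times\Com\to H^1$, precisely because of the non-degeneracy hypothesis \eqref{Cond111}: the single obstruction killed by pairing with $\mu^0$ is matched by the single scalar $m$ (injectivity follows since the only homogeneous solution $\propto(\mu^0)^{-1}$ is unbounded, hence excluded from $H^2$). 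The pointwise bounds \eqref{Cond110} (in particular $|\mu^0|>0$ and $|\mu^0_x/\mu^0|\le C$) then make the variation-of-parameters inversion $\phi=(\mu^0)^{-1}\int^x\mu^0(\cdot)$ uniformly bounded from $H^1$ to $H^2$ with the correct decay. The implicit function theorem yields the unique $(\phi,m)=(w-w^0,m)$ near $(w^0,m^0)$ with the Lipschitz bounds \eqref{uno}--\eqref{dos} for every $u_a$ satisfying \eqref{ceroCond}; one then recovers $u_b=w_x-u_a\in H^1$, reads off $v_b$ as above, and verifies \eqref{tres} and \eqref{lim0}, both built into the construction.

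I expect the main obstacle to be the functional-analytic set-up that makes this rigorous: choosing spaces so that $w-w^0\in H^2$ while $w$ itself carries the non-zero limits of \eqref{lim0}; controlling the complex-valued (hence a priori unbounded) $\sin,\cos$ nonlinearities uniformly over the $\nu$-neighborhood and checking that they send $H^2$-perturbations back into $H^1$; and, above all, establishing the isomorphism property of the linearized operator together with the free scalar $m$, which is exactly the Lyapunov--Schmidt statement that the one-dimensional cokernel detected by $\mu^0$ is removed by \eqref{Cond111}. Once the estimates on $\mu^0$ from \eqref{Cond110} are combined with the algebra property of $H^1(\R)$, the remaining verifications are routine.
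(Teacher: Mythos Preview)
Your proposal is correct and follows essentially the same route as the paper. The paper also eliminates $v_b$ first, introduces the shifted variable $\widetilde u_c:=\widetilde u_a+\widetilde u_b-(\widetilde u_a^0+\widetilde u_b^0)$ (your $\phi$), applies the Implicit Function Theorem to $G_1$ viewed as a map $H^2\times\Com\to H^1$, inverts the linearization via the integrating factor $\mu^0$, and fixes $m$ by exactly the solvability condition you wrote; the $H^2$ bounds on $\widetilde u_c$ are then obtained by Young's inequality on the variation-of-parameters formula, just as you outline.
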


\begin{proof}

Given $u_a,  u_b$, $m$ and $v_a$ well-defined, $v_b$ is uniquely defined from \eqref{G2}. We solve for $ u_b$ and $m$ now. We will use the Implicit Function Theorem.  

\medskip

We make a change of variables in order to specify a suitable range for $G$ and being able to prove \eqref{lim0}. Define
\be\label{uc}
u_c:= u_a +  u_b - u_c^0,  \qquad {u}_c^0 := u_a^0 + u_b^0 \in H^1(\R;\Com),
\ee
and similar for $\widetilde u_c$ and $\widetilde u_c^0$:
\[
(\widetilde u_c)_x =u_c, \qquad   (\widetilde u_c^0)_x =u_c^0.
\]
In what follows, we will look for a suitable $\widetilde u_c$ with decay, and then we find $u_b$. Indeed, note that given $u_c$ and $u_a$, $u_b$ can be easily obtained.  Then, with a slight abuse of notation, we consider $G$ defined as follows:
\[
G = (G_1,G_2), \quad G=G(u_a, \widetilde u_c, v_a, v_b, m),
\]
and 
\[
 \begin{array}{ccc} G:  H^1(\R;\Com) \times H^2(\R;\Com) \times H^{-1}(\R;\Com)^2 \times \Com &  \longrightarrow &H^1(\R;\Com) \times H^{-1}(\R;\Com) ,\\ 
  (u_a,  \widetilde u_c, v_a, v_b, m)   & \longmapsto & G(u_a,  \widetilde u_c, v_a, v_b, m) \end{array} 
\]
where, from \eqref{G1},
\be\label{G1new}
G_1(u_a, \widetilde u_c,v_a,v_b,m):=   \frac{(2u_a - u_c^0 - u_c)}{\sqrt{2}}  - m \, \sin \Big(\frac{\widetilde u_c^0 + \widetilde u_c}{\sqrt{2}}\Big),
\ee
and from \eqref{G2},
\bea\label{G2new}
& & G_2(u_a, \widetilde u_c,  v_a, v_b,m)   :=   v_a- v_b   \nonu\\
& &  \qquad + \, m \,  \Big[ (u_c^0 + u_c)_{x} \cos \Big(\frac{\widetilde u_c^0 + \widetilde u_c}{\sqrt{2}} \Big)   + \frac { (u_a^2 + (u_c^0 + u_c -u_a)^2)}{\sqrt{2}}\sin \Big( \frac{\widetilde u_c^0 +\widetilde u_c}{\sqrt{2}} \Big) \Big] . \nonu\\
& & 
\eea
Clearly  $G$ as in \eqref{G1new}-\eqref{G2new} defines a $C^1$ functional in a small neighborhood of $X^1$ given by 
\be\label{X1}
X^1:= (u_a^0, 0, v_a^0, v_b^0, m^0) \in H^1(\R;\Com) \times H^2(\R;\Com) \times H^{-1}(\R;\Com)^2 \times \Com,
\ee
where $G$ is well-defined according to \eqref{Conds12}. Let us apply the Implicit Function Theorem at this point. From \eqref{G1new}  we have to show that 
\[
u_c  + m^0 \cos \Big(\frac{\widetilde u_c^0}{\sqrt{2}}\Big) \widetilde u_c  =f - m  \sin \Big(\frac{\widetilde u_c^0}{\sqrt{2}}\Big) 
\] 
has a unique solution $(\widetilde u_c,m)$ such that $ \widetilde u_c \in H^2(\R;\Com)$, for any $f \in H^1(\R;\Com)$ with linear bounds. From \eqref{Conds13} we have
\be\label{3p16}
\lim_{x\to \pm\infty}\cos\Big(\frac{\widetilde u_c^0}{\sqrt{2}}\Big) = \mp 1,
\ee
so that we can assume 
\[
\mu^0 (x)= \exp \Big(m^0\int_{0}^x \cos\Big(\frac{\widetilde u_c^0}{\sqrt{2}}\Big)\Big).
\]
Note that $\mu^0$ decays exponentially in space as $x\to \pm \infty$. We have
\[
\mu^0 u_c + (\mu^0)_x\widetilde u_c = \mu^0  \Big[ f- m \sin \Big(\frac{\widetilde u_c^0}{\sqrt{2}}\Big) \Big].
\]
Using \eqref{Cond111}, we choose $m \in \Com$ such that 
\be\label{nullity}
\int_\R \mu^0  \Big[ f- m \sin \Big(\frac{\widetilde u_c^0}{\sqrt{2}}\Big) \Big] =0,
\ee
so that
\[
|m| \leq C\|f\|_{L^2(\R;\Com)},
\]
with $C>0$ depending on the quantity $\displaystyle{\abs{\int_\R \mu^0   \sin \Big(\frac{\widetilde u_c^0}{\sqrt{2}}\Big)} } \neq 0$ and $\|\mu^0\|_{L^2(\R;\Com)}$.\footnote{Note that $\|\mu^0\|_{L^2(\R;\Com)}$ blows up as \eqref{BadCondFF} is attained.} We get
\[
 \widetilde u_c = \frac1{\mu^0} \int_{-\infty}^x \mu^0 \Big[ f- m \sin \Big(\frac{\widetilde u_c^0}{\sqrt{2}}\Big) \Big].
\]
Finally, note that we have $u_c\in H^1(\R;\Com)$. Indeed, first of all, thanks to \eqref{nullity}, \eqref{Mu0} and \eqref{3p16},
\[
\lim_{x\to \pm\infty}  \widetilde u_c = \lim_{x\to \pm\infty}\frac{\mu^0}{ \mu^0_x} \Big[ f- m \sin \Big(\frac{\widetilde u_c^0}{\sqrt{2}}\Big) \Big]=0.
\]
Note that if $s\leq x\ll -1$, from \eqref{3p16} we get
\[
\abs{\frac{\mu^0(s)}{\mu^0(x)}}= \abs{\exp \Big(-m^0\int_{s}^x \cos\Big(\frac{\widetilde u_c^0}{\sqrt{2}}\Big)\Big)} \leq C e^{-\re m^0 (x-s)},
\]
so that we have for $x<0$ and large\footnote{Here the symbol $\star$ denotes \emph{convolution}.}
\bee
|\widetilde u_c(x)| & \leq & C  \int_{-\infty}^x e^{-(\re m^0) (x - s)} \Big| f (s)- m \sin \Big(\frac{\widetilde u_c^0(s)}{\sqrt{2}}\Big) \Big|ds \\
&  \leq &   C {\bf 1}_{(-\infty,x]}  e^{-(\re m^0)(\cdot)} \star \Big| f - m \sin \Big(\frac{\widetilde u_c^0}{\sqrt{2}}\Big) \Big| ,  \qquad \re m^0>0. 
\eee
A similar result holds for $x>0$ large, after using \eqref{nullity}. Therefore, from the Young's inequality,
\[
\|\widetilde u_c\|_{L^2(\R;\Com)} \leq C \Big\|  f - m \sin \Big(\frac{\widetilde u_c^0}{\sqrt{2}}\Big) \Big\|_{L^2(\R;\Com)} \leq C\|f\|_{L^2(\R;\Com)},
\]
as desired. On the other hand,
\[
 (\widetilde u_c)_x = \Big[ f- m \sin \Big(\frac{\widetilde u_c^0}{\sqrt{2}}\Big) \Big] -  \frac{\mu^0_x}{(\mu^0)^2} \int_{-\infty}^x \mu^0 \Big[ f- m \sin \Big(\frac{\widetilde u_c^0}{\sqrt{2}}\Big) \Big].
\]
Since $\mu^0_x/\mu^0$ is bounded (see \eqref{Cond110}), we have $\widetilde u_c \in H^1(\R;\Com)$. Finally, it is easy to see that $ \widetilde u_c \in H^2(\R;\Com).$  Note that the constant involving the boundedness of the linear operator $f\mapsto \widetilde u_c$ depends on the $H^1$-norm of $\mu^0$, which blows up if \eqref{BadCondFF} is satisfied.

\medskip

It turns out that we can apply the Implict Function Theorem to the operator $G$ described in \eqref{G1new}-\eqref{G2new}, such that \eqref{cero} is satisfied, provided \eqref{ceroCond} holds. 

\medskip

First of all, let us prove \eqref{tres} and \eqref{lim0}. Note that $\tilde u_c \in H^2(\R;\Com)$, so that \eqref{tres} and \eqref{lim0} are easily obtained.

\medskip

On the other hand, estimate \eqref{uno} is equivalent to the estimate
\[
\|\tilde u_c \|_{H^2(\R;\Com)} \leq C\nu.
\]
We will obtain this estimate using the almost linear character of the operator $G$ around the point $X^1$. Since $\tilde u_c$ satisfies the equation \eqref{G1new}, we have
\[
\frac{(2(u_a-u_a^0) - (\widetilde u_c)_x)}{\sqrt{2}}  - m \Big[ \sin \Big(\frac{\widetilde u_c^0 + \widetilde u_c}{\sqrt{2}}\Big) - \sin \Big(\frac{\widetilde u_c^0 }{\sqrt{2}}\Big) \Big]=0,
\]
or
\[
\frac{(2(u_a-u_a^0) - (\widetilde u_c)_x)}{\sqrt{2}}  - m \Big[ \sin \Big(\frac{\widetilde u_c^0}{\sqrt{2}}\Big) \Big\{ \cos\Big(\frac{\widetilde u_c}{\sqrt{2}} \Big) -1\Big\}   + \cos\Big(\frac{\widetilde u_c^0 }{\sqrt{2}}\Big)  \sin \Big(\frac{\widetilde u_c}{\sqrt{2}}\Big)  \Big]=0.
\]
From \eqref{ceroCond} we know that $\|u_a-u_a^0\|_{H^1(\R;\Com)}  <\nu$.  Since $\widetilde u_c$ is already small in $H^2(\R;\Com)$, it is small in $L^\infty(\R;\Com)$, therefore we have
\[
\abs{ \sin \Big(\frac{\widetilde u_c}{\sqrt{2}}\Big) -\frac{\widetilde u_c}{\sqrt{2}} } \leq C|\widetilde u_c|^3,
\]
and
\[
\abs{ \cos \Big(\frac{\widetilde u_c}{\sqrt{2}}\Big) -1} \leq C |\widetilde u_c|^2.
\]
In other words,
\[ 
(\widetilde u_c)_x + m  \cos\Big(\frac{\widetilde u_c^0 }{\sqrt{2}}\Big)\widetilde u_c = O_{H^1(\R;\Com)}(\nu) +O( |\widetilde u_c|^2).
\]
If we define
\[
\mu (x) := \exp \Big(m \int_{0}^x \cos\Big(\frac{\widetilde u_c^0}{\sqrt{2}}\Big)\Big),
\]
which is exponentially decreasing in space, the solution to system satisfies
\[
|\widetilde u_c| \leq  \mu^{-1} \int_{0}^x \mu [ O_{H^1(\R;\Com)}(\nu) +O( |\widetilde u_c|^2) ]
\]

The proof is complete.
\end{proof}

Later we will need a second invertibility theorem. This time we assume that $m$ is fixed, $u_b \sim u_b^0$ is known and we look for $u_a \sim u_a^0$ this time. Note that the positive sign in front of \eqref{G1} will be essential for the proof, otherwise we cannot take $m$ fixed.

\begin{prop}\label{IFT2}
 Let $X^0= (u_a^0,u_b^0, v_a^0,v_b^0,m^0) \in H^1(\R; \Com)^2 \times H^{-1}(\R,\Com) \times \Com$ be such that \eqref{rem0}, \eqref{Conds11}, \eqref{Conds12} and \eqref{Conds13} are satisfied.
%
%
Assume additionally that the ODE
\be\label{Mu2}
(\mu^1)_x +   m \cos \Big(\frac{\widetilde u_a^0+\widetilde u_b^0}{\sqrt{2}}\Big) \mu^1 =0 ,
\ee
has a {\bf smooth} solution  $\mu^1=\mu^1(x)\in \Com$ satisfying 
\be\label{Cond212}
|\mu^1(x)|>0, \qquad \abs{\frac{\mu^1_x(x)}{\mu^1(x)}}\leq C, \quad \frac{1}{\mu^1} \in H^1(\R;\Com),
\ee
and $G$ is smooth in a small neighborhood of $X^0$. Then there exists $\nu_1>0$ and a fixed constant $C>0$ such that for all $0<\nu<\nu_1$ the following is satisfied. For any $(u_b,v_b,m)\in H^1(\R; \Com) \times H^{-1}(\R;\Com) \times \Com$ such that 
\be\label{Smallness}
\|u_b-u_b^0\|_{H^1(\R;\Com)}  + |m-m^0| <\nu,
\ee 
$G$ is well-defined and there exist unique $(u_a,v_a)\in H^1(\R,\Com)\times H^{-1}(\R;\Com)$ such that 

\[
G(u_a,u_b,v_a,v_b, m) \equiv (0,0),
\]

\be\label{ortho}
\int_\R (u_a-u_b) (\frac1{\mu^1})_x =0.
\ee

\be\label{Conds234}
\|  \widetilde u_a + \widetilde u_b - \widetilde u_a^0 - \widetilde u_b^0 \|_{H^2(\R;\Com)} \leq C\nu,
\ee

\be\label{Conds23a}
\lim_{-\infty} \ ( \widetilde u_a + \widetilde u_b) =0, \quad   \lim_{+\infty} \ (\widetilde u_a + \widetilde u_b) =\sqrt{2} \pi,
\ee
and
\be\label{Cua}
\|u_a-u_a^0\|_{H^1(\R;\Com)}  <C \nu.
\ee

\end{prop}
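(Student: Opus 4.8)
The plan is to mimic the proof of Proposition \ref{IFT}, applying the Implicit Function Theorem to the equation $G_1=0$ after reducing away $v_a,v_b$, but now solving for $u_a$ with $m$ held fixed. As before, once $u_a,u_b,m$ and $v_b$ are prescribed, $v_a$ is recovered explicitly and linearly from $G_2=0$, so the whole content lies in inverting $G_1$. I would set, exactly as in \eqref{uc}, $\widetilde u_c:=\widetilde u_a+\widetilde u_b-\widetilde u_a^0-\widetilde u_b^0\in H^2(\R;\Com)$ as the unknown, recovering $u_a=(\widetilde u_c)_x - u_b + u_a^0+u_b^0$ from the given $u_b$. Writing $u_a-u_b=(\widetilde u_c)_x+(u_a^0+u_b^0)-2u_b$, the equation $G_1=0$ becomes a first order ODE for $\widetilde u_c$ whose linearization at the base point $X^0$ (i.e. $\widetilde u_c=0$, $u_b=u_b^0$, $m=m^0$) is, up to the factor $1/\sqrt2$,
\[
L\psi:=\psi_x - m^0\cos\Big(\frac{\widetilde u_a^0+\widetilde u_b^0}{\sqrt2}\Big)\psi .
\]

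The decisive difference with Proposition \ref{IFT} is the sign of the relevant integrating factor. Here the integrating factor is precisely $\mu^1$ from \eqref{Mu2}, and since $\re m^0>0$ and $\cos(\widetilde u_a^0/\sqrt2+\widetilde u_b^0/\sqrt2)\to\mp1$ as $x\to\pm\infty$ (cf. \eqref{3p16}), $\mu^1$ now \emph{grows} exponentially at both ends while its reciprocal $1/\mu^1$ \emph{decays} and lies in $H^1$ by \eqref{Cond212}. A direct check shows $L(1/\mu^1)=0$, so $1/\mu^1$ is a genuine decaying element of the kernel of $L$. Consequently $L$ is onto $H^1(\R;\Com)$ with a one-dimensional kernel, in contrast with the injective-but-non-surjective operator of Proposition \ref{IFT}, whose cokernel forced the nullity condition \eqref{nullity} and hence the adjustment of $m$. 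This is exactly the mechanism behind the remark preceding the statement: the positive sign in front of $u_a$ in \eqref{G1} yields a kernel rather than a cokernel, so \textbf{no solvability condition is needed} and $m$ may be kept fixed. The price is uniqueness, which I would restore by the orthogonality condition \eqref{ortho}: since the free direction of the solution is precisely the kernel $1/\mu^1$, requiring $\int_\R(u_a-u_b)(1/\mu^1)_x=0$ pins down the kernel component and makes $L$ an isomorphism from $\{\psi:\eqref{ortho}\text{ holds}\}$ onto $H^1(\R;\Com)$.

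With this isomorphism in hand the Implicit Function Theorem produces a unique $C^1$ map $(u_b,m)\mapsto(u_a,v_a)$ near $X^0$ solving $G=0$ together with \eqref{ortho}, valid on the ball \eqref{Smallness}. The quantitative bounds \eqref{Conds234} and \eqref{Cua} would then be obtained exactly as in Proposition \ref{IFT}, expanding $\sin$ and $\cos$ of $\widetilde u_c/\sqrt2$ so that $\widetilde u_c$ satisfies $(\widetilde u_c)_x - m^0\cos(\cdots)\widetilde u_c=O_{H^1(\R;\Com)}(\nu)+O(|\widetilde u_c|^2)$ and closing a contraction in $H^2(\R;\Com)$. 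The asymptotics \eqref{Conds23a} are immediate once $\widetilde u_c\in H^2(\R;\Com)$, since then $\widetilde u_a+\widetilde u_b$ shares the limits \eqref{Conds13} of the base profile.

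I expect the main obstacle to be the invertibility step: the construction and $H^2$-control of the decaying solution of $L\widetilde u_c=F$. Because $\mu^1$ now grows at both ends, neither one-sided primitive $\frac1{\mu^1}\int_{-\infty}^x\mu^1F$ nor $\frac1{\mu^1}\int_x^{+\infty}\mu^1F$ is globally well defined; I would instead choose the direction of integration end-by-end so that the kernel $\mu^1(s)/\mu^1(x)$ stays bounded by $e^{-\re m^0|x-s|}$, and glue the two pieces using the kernel element $1/\mu^1$. Producing the correct splitting, checking that the resulting solution operator is bounded $H^1\to H^2$ by a Young-type convolution estimate (the reversed analogue of the one in Proposition \ref{IFT}), and verifying that \eqref{ortho} exactly annihilates the residual kernel freedom, is the delicate part; everything else is a transcription of the earlier argument.
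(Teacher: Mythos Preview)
Your proposal is correct and follows essentially the same approach as the paper: the same change of variables to $\widetilde u_c$, the same identification of the linearized operator and of $1/\mu^1$ as its decaying kernel element, and the same use of \eqref{ortho} to kill the kernel freedom so that the Implicit Function Theorem applies with $m$ fixed. The only difference is that the step you flag as ``delicate'' (constructing a bounded right inverse when $\mu^1$ blows up at both ends) is handled more simply in the paper: rather than gluing two one-sided primitives, one just integrates from a fixed interior point, writing $\widetilde u_c = \tfrac{1}{\mu^1}\mu^1(0)\widetilde u_c(0) + \tfrac{1}{\mu^1}\int_0^x \mu^1 f$, so the kernel freedom is exactly the constant $\widetilde u_c(0)$ and the Young-type estimate $|\mu^1(s)/\mu^1(x)|\lesssim e^{-\re m\,|x-s|}$ applies directly on each half-line.
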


\begin{proof}
Given $u_a$, $u_b$ and $v_b$ well-defined, $v_a$ is uniquely defined from \eqref{G2}. We solve for $u_a$ now. 

\medskip

We follow the ideas of the proof  of Proposition \ref{IFT}. However, this time we consider $G$ defined in the opposite sense: using \eqref{uc},
\[
G = (G_3,G_4), \quad G=G(\widetilde u_c, u_b, v_a, v_b, m),
\]

\[
 \begin{array}{ccc} G:  H^2(\R;\Com) \times H^1(\R;\Com) \times H^{-1}(\R;\Com)^2 \times \Com &  \longrightarrow &H^1(\R;\Com) \times H^{-1}(\R;\Com) ,\\ 
  ( \widetilde u_c, u_b,v_a, v_b, m)   & \longmapsto & G( \widetilde u_c, u_b, v_a, v_b, m) \end{array} 
\]
with
\be\label{ortho0}
\int_\R (\widetilde u_c)_x   (\frac{1}{\mu^1})_x  =0,
\ee
where, from \eqref{G1},
\be\label{G3}
G_3(\widetilde u_c, u_b,v_a,v_b,m):=   \frac{(u_c^0 + u_c - 2u_b)}{\sqrt{2}}  - m \, \sin \Big(\frac{\widetilde u_c^0 + \widetilde u_c}{\sqrt{2}}\Big),
\ee
and from \eqref{G2},
\bea\label{G4}
& & G_4(\widetilde u_c, u_b, v_a, v_b,m)   :=   v_a- v_b   \nonu\\
& &  \qquad + \, m \,  \Big[ (u_c^0 + u_c)_{x} \cos \Big(\frac{\widetilde u_c^0 + \widetilde u_c}{\sqrt{2}} \Big)   + \frac { ( (u_c^0 + u_c -u_b)^2 + u_b^2)}{\sqrt{2}}\sin \Big( \frac{\widetilde u_c^0 +\widetilde u_c}{\sqrt{2}} \Big) \Big] . \nonu\\
& & 
\eea
Clearly  $G$ as in \eqref{G3}-\eqref{G4} defines a $C^1$ functional in a small neighborhood of $X^2$ given by 
\be\label{X2}
X^2:= (0,u_b^0, v_a^0, v_b^0, m^0) \in H^2(\R;\Com) \times H^1(\R;\Com) \times H^{-1}(\R;\Com)^2 \times \Com,
\ee
where $G$ is well-defined according to \eqref{Conds12} and $G(X^2)=(0,0)$.

\medskip

Fix $m$ close enough to $m^0$. Now we have to show that 
\be\label{ua}
u_c  - m \cos \Big(\frac{\widetilde u_c^0}{\sqrt{2}}\Big) \widetilde u_c = f,
\ee
has a unique solution $\widetilde u_c$ such that $u_c  \in H^2(\R;\Com)$, for any $ f \in H^1(\R;\Com)$. Indeed, consider $\mu^1$ given by \eqref{Mu2}. It is not difficult to check that  (see conditions  \eqref{rem0}, \eqref{Smallness} and \eqref{Conds13})
\[
\re m >0,
\]

\be\label{3p7}
\lim_{\pm \infty}\cos\Big(\frac{\widetilde u_c^0}{\sqrt{2}}\Big) =\mp 1,
\ee
and
\[
\mu^1 = \exp \Big(-m \int_{0}^x \cos\Big(\frac{\widetilde u_c^0}{\sqrt{2}}\Big)\Big).
\]
Note that from \eqref{3p7} and \eqref{rem0} we have that $|\mu^1(x)|$ is exponentially growing in space as $x\to \pm\infty.$  We have from \eqref{ua}, 
\[
(\mu^1 \widetilde u_c)_x = \mu^1 f ,
\]
so that, thanks to \eqref{Cond212}, 
\[
\widetilde u_c = \frac{1}{\mu^1} \mu^1(0)\widetilde u_c(0) + \frac{1}{\mu^1} \int_0^x \mu^1 f.
\]
Clearly $\lim_{\pm \infty} \widetilde u_c =0$ for $f \in H^1(\R;\Com)$. In order to ensure uniqueness, we ask for $\widetilde u_c$ satisfying
\[
\int_\R u_c   (\frac{1}{\mu^1})_x  =0,
\]
which is nothing but \eqref{ortho0} and \eqref{ortho}, which is justified by \eqref{Cond212}. Let us show that $\widetilde u_c \in L^2(\R;\Com)$. We have for $x>0$ large
\[
|\widetilde u_c(x)| \leq C  \int_0^x e^{-(\re m) (x-s)}|f(s)|ds = C e^{-(\re m) (\cdot) } \star |f|,  \qquad \re m>0. 
\]
A similar estimate can be established if $x<0$. Therefore, using Young's inequality,
\[
\|\widetilde u_c\|_{L^2(\R;\Com)} \leq C \|f\|_{L^2(\R;\Com)},
\]
as desired. Now we check that $u_c \in H^1(\R;\Com).$ Indeed, we have
\[
u_c =  f -  \frac{\mu^1_x}{(\mu^1)^2} \int_0^x \mu^1 f.
\]
Since $\mu^1_x / \mu^1$ is bounded, we have proven that $u_c \in L^2(\R;\Com)$. A new iteration proves that $u_c \in H^1(\R;\Com)$. Estimates \eqref{Conds234}-\eqref{Conds23a}-\eqref{Cua} are consequence of the Implicit Function Theorem and can be proved as in the previous Proposition. The proof is complete. 
\end{proof}

We finish this Section by pointing out the role played by the B\"acklund transformation in the mKdV dynamics. We recall the following standard result.

\begin{thm}\label{Cauchy1}
Let $m\in \Com$ be a {\bf fixed} parameter, and $I \subset\R$ an open time interval. Assume that $u_b \in C(I; H^1(\R;\Com))$ and solves \eqref{mKdV}, i.e., 
\be\label{ecu0}
(u_b)_t +((u_b)_{xx} +u_b^3)_x=0,
\ee
in the $H^1$-sense. Define $v_b := \int_0^x (u_b)_t $ as  a distribution in $H^{-1}(\R;\Com)$. Then for each $t\in I$ the corresponding solution $(u_a(t), v_a(t))$ of \eqref{G1}-\eqref{G2} for $m$ fixed, obtained in the space $ H^1(\R;\Com) \times H^{-1}(\R;\Com)$, satisfies the following:  

\ben
\item $u_a\in C(I; H^1(\R;\Com))$; 
\item $(u_a)_t =(v_a)_x$ is well-defined in $H^{-2}(\R;\Com)$; and
\item  $u_a$ solves \eqref{mKdV} in the $H^1$-sense.
\een

\end{thm}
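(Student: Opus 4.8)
The plan is to combine the quantitative invertibility of the B\"acklund transformation with $m$ fixed, furnished by Proposition \ref{IFT2}, with the classical B\"acklund \emph{compatibility} computation that turns the two relations \eqref{G1}--\eqref{G2} into the mKdV equation for $u_a$. Throughout I abbreviate $w:=\tfrac1{\sqrt2}(\widetilde u_a+\widetilde u_b)$, so that \eqref{G1} reads $\tfrac1{\sqrt2}(u_a-u_b)=m\sin w$ and $w_x=\tfrac1{\sqrt2}(u_a+u_b)$. For item (1): with $m$ fixed and $u_b=u_b(t)$ known, Proposition \ref{IFT2} produces, in a neighborhood of each $t_0\in I$, a unique $u_a(t)\in H^1(\R;\Com)$ solving $G=(0,0)$, together with the Lipschitz control \eqref{Cua}. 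Since the inversion $u_b\mapsto u_a$ is locally Lipschitz into $H^1(\R;\Com)$ and $t\mapsto u_b(t)$ is continuous into $H^1(\R;\Com)$ by hypothesis, the composition $t\mapsto u_a(t)$ is continuous; covering $I$ by such neighborhoods yields $u_a\in C(I;H^1(\R;\Com))$.

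The algebraic heart is item (3). Using only $G_1=0$, I would differentiate twice in $x$ and expand $u_a^3-u_b^3$ to obtain the pointwise identity
\[
((u_a)_x+(u_b)_x)\cos w+\tfrac1{\sqrt2}(u_a^2+u_b^2)\sin w=\tfrac1m\big[((u_a)_{xx}+u_a^3)-((u_b)_{xx}+u_b^3)\big],
\]
so that the bracket appearing in \eqref{G2} collapses. Since $u_b$ solves \eqref{ecu0}, one has $v_b=-((u_b)_{xx}+u_b^3)$ modulo a function of $t$; substituting both facts into $G_2=0$ gives $v_a=-((u_a)_{xx}+u_a^3)$ modulo a function of $t$, hence $(v_a)_x=-((u_a)_{xx}+u_a^3)_x$.

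It then remains to identify $(u_a)_t$ with $(v_a)_x$; granting this, the previous paragraph is exactly \eqref{mKdV} for $u_a$ in the $H^1$-sense, and $(v_a)_x\in H^{-2}(\R;\Com)$ follows because solving $G_2=0$ places $v_a$ in $H^{-1}(\R;\Com)$ (the trigonometric factors are bounded, the remaining factors lie in $L^2(\R;\Com)$, and $v_b\in H^{-1}(\R;\Com)$). To get the identification I would differentiate $G_1=0$ in time: writing $\widetilde u_{b,t}=v_b$ and using $(u_a)_t=\partial_x\widetilde u_{a,t}$, this yields the first-order linear relation $\partial_x\widetilde u_{a,t}-m\cos w\,\widetilde u_{a,t}=(v_b)_x+m\cos w\,v_b$, and the same relation is satisfied by $v_a$ (a computation using $G_2=0$, $G_1=0$ and the equation for $u_b$). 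Thus $d:=\widetilde u_{a,t}-v_a$ solves the homogeneous equation $d_x=m\cos w\,d$, and the boundary behaviour from \eqref{Conds23a} together with $\re m>0$ (cf. \eqref{rem0}) forces $d_x\equiv0$, i.e. $(u_a)_t=\partial_x\widetilde u_{a,t}=(v_a)_x$. This proves (2) and, with the algebraic step, (3).

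I expect the genuine difficulty to lie in this last identification at $H^1$ regularity: the formal time-differentiation of the nonlinear relation \eqref{G1} and the ensuing ODE uniqueness must be justified where the potentials $\widetilde u_a,\widetilde u_b$ may be unbounded and where $v_b=-((u_b)_{xx}+u_b^3)+c(t)$ involves a trace at $x=0$ unavailable for $H^1$ data. I would keep every identity in $\partial_x$-form, so that only the genuinely $H^{-1}$ object $v_b$ enters and never a pointwise value of $(u_b)_{xx}$; and, as a robust backstop, I would run a density argument: approximate $u_b(t_0)$ by Schwartz data, evolve by the locally well-posed flow \cite{KPV}, invoke the classical compatibility for the resulting smooth solutions, and pass to the limit using the Lipschitz stability of the inversion from Proposition \ref{IFT2} together with the continuity of the flow map, so that (1)--(3) persist in the $H^1$/$H^{-2}$ limit.
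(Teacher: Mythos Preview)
Your approach mirrors the paper's proof closely: both use the continuity of the IFT inversion (Proposition \ref{IFT2}) for (1), perform the same algebraic collapse of the bracket in $G_2$ via two $x$-derivatives of $G_1$ to obtain $(v_a)_x+((u_a)_{xx}+u_a^3)_x=0$, and then identify $(u_a)_t$ with $(v_a)_x$ by time-differentiating $G_1$, differentiating $G_2$, and invoking a uniqueness argument (the paper appeals to the IFT uniqueness directly, while you phrase it as an ODE uniqueness, which is equivalent since the inversion in Proposition \ref{IFT2} is built on precisely that linear ODE). The paper likewise relies on density to $H^3$ to justify the formal manipulations, so your density backstop is exactly what is done there.
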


\begin{proof}
The first step is an easy consequence of the continuous character of the solution map given by the implicit function theorem.
By density we can assume $u_b(t) \in H^3(\R;\Com)$. From \eqref{G1} we have
\be\label{Pivote0}
(u_a)_x- (u_b)_x = m\cos\Big(\frac{\widetilde u_a+\widetilde u_b}{\sqrt{2}}\Big)(u_a+u_b),
\ee
and
\[
(u_a)_{xx}-(u_b)_{xx} = m\cos\Big(\frac{\widetilde u_a+\widetilde u_b}{\sqrt{2}}\Big)((u_a)_x+(u_b)_x)  -\frac m{\sqrt{2}}\sin\Big(\frac{\widetilde u_a+\widetilde u_b}{\sqrt{2}}\Big)(u_a+u_b)^2.
\]
Therefore, from \eqref{G2} and \eqref{G1},
\bee
v_a-  v_b & =&    -  ((u_a)_{xx}-(u_b)_{xx} )    -  \frac m{\sqrt{2}}\sin\Big(\frac{\widetilde u_a+\widetilde u_b}{\sqrt{2}}\Big)  [(u_a+u_b)^2  + (u_a^2 + u_b^2) ]  \\
&  = &  -  ((u_a)_{xx}-(u_b)_{xx} ) -  (u_a-u_b) (u_a^2+u_a u_b+u_b^2)\\
& =& -((u_a)_{xx} +u_a^3) -((u_b)_{xx} +u_b^3).
\eee
Since $(v_b)_x = (u_b)_t$, we have from \eqref{ecu0} that $(v_b)_x  +((u_b)_{xx} +u_b^3)_x =0$. Therefore
\be\label{Pivote}
(v_a)_x +((u_a)_{xx} +u_a^3)_x =0.
\ee
Finally, if $(u_a)_t = (v_a)_x$, we have that  $u_a$ solves \eqref{mKdV}. In order to prove this result, we compute the time derivative in \eqref{G1}: we get
\be\label{Pivote1}
(u_a)_t-(u_b)_t  = m \cos \Big(\frac{\widetilde u_a+\widetilde u_b}{\sqrt{2}}\Big)((\widetilde u_a)_t+(\widetilde u_b)_t).
\ee
Note that given $u_b$, the solution $u_a$ is uniquely defined, thanks to the Implicit Function Theorem.
Additionally, from \eqref{G2}
\bee
&& (v_a)_x-(v_b)_x + \\
& &  \quad + \, m \,  \Big[ ( (u_a)_{xx} +  (u_b)_{xx}) \cos \Big(\frac{\widetilde u_a+\widetilde u_b}{\sqrt{2}} \Big) \\
& & -  \frac 1{\sqrt{2}}( (u_a)_{x} +  (u_b)_{x})  (u_a+u_b) \sin \Big(\frac{\widetilde u_a+\widetilde u_b}{\sqrt{2}} \Big) \\
& & \quad   +  \sqrt{2}  (u_a (u_a)_x + u_b (u_b)_x) \sin \Big( \frac{\widetilde u_a+\widetilde u_b}{\sqrt{2}} \Big)\\
&&  + \frac{ (u_a^2 + u_b^2)}{2}(u_a+u_b)\cos \Big( \frac{\widetilde u_a+\widetilde u_b}{\sqrt{2}} \Big) \Big]  =0.
\eee
We use \eqref{G1} and \eqref{G2} in the previous identity we get
\bee
&& (v_a)_x-(v_b)_x + \\
& &  \quad +  \Big[ m ( (u_a)_{xx} +  (u_b)_{xx}) \cos \Big(\frac{\widetilde u_a+\widetilde u_b}{\sqrt{2}} \Big) + (u_a^2 -u_au_b +u_b^2) ((u_a)_x-(u_b)_x)  \Big]  =0.
\eee
Finally, we use \eqref{Pivote0} to obtain
\[
 (v_a)_x-(v_b)_x +m\cos \Big(\frac{\widetilde u_a+\widetilde u_b}{\sqrt{2}} \Big) ((u_a)_{xx} +u_a^3 +(u_b)_{xx} + u_b^3) =0,
\]
so \eqref{ecu0} and \eqref{Pivote} imply
\[
(v_a)_x-(v_b)_x = m\cos \Big(\frac{\widetilde u_a+\widetilde u_b}{\sqrt{2}} \Big) ( v_a +v_b),
\]
so that from \eqref{Pivote1} and the uniqueness we conclude.
\end{proof}

\bigskip

\section{Dynamics of complex-valued mKdV solitons}\label{4}

\medskip

In what follows we will apply the results from the previous section in a neighborhood of the complex soliton at time equals zero. 
Define (cf. \eqref{tQ}),
\be\label{Q00}
\widetilde Q^0 := \widetilde Q(x;\al,\bt, 0,0),
\ee
and similarly for $Q^0$ and $\widetilde Q_t^0$. Recall that from Lemma \ref{Good} the complex soliton $Q^0$ is everywhere well-defined since  \eqref{BadCondFF} is not satisfied.  Finally, given any 
\[
\widetilde z_b^0 \in \dot H^1(\R;\Com),
\]
we define $z_b^0$ by the identity (see \eqref{tildeN} for instance)
\[
 z_b^0 := (\widetilde z_b^0)_x,
\]
and in term of distributions,
\[
w_b^0:= -( (z_b^0)_{xx} + (z_b^0)^3) \in H^{-1}(\R;\Com).
\]

\begin{lem}\label{L2}
There exists $\nu_0>0$ and $C>0$ such that, for all $0<\nu<\nu_0$, the following holds. For all $z_b^0 \in H^1(\R;\Com)$ satisfying
\be\label{smallNU}
\|z_b^0\|_{H^1(\R;\Com)} <\nu,
\ee
there exist unique $ y_a^0\in H^1(\R,\Com)$, $ y_a^1\in H^{-1}(\R,\Com)$ and $ m \in \Com$, of the form 
\be\label{deco}
y_a^0(x)=  y_a^0[z_b^0](x) ,\quad y_a^1(x)=  y_a^1[z_b^0, w_b^0](x), \quad  m :=  \bt+ i\al + q^0,
\ee
such that 
\[
\|y_a^0\|_{H^1(\R;\Com)} +| q^0| \leq C\nu,
\]

\[
\widetilde z_a^0 + \widetilde y_a^0 \in H^2(\R;\Com), \footnote{Note that both $\widetilde z_a^0$  and $\widetilde y_a^0$ may be unbounded functions, but the addition is bounded on $\R$.}
\]
and
\be\label{Goo}
G(Q^0 + z_a^0, y_a^0,  \widetilde Q_t^0 +  w_a^0,  y_a^1 ,  m) \equiv (0,0).
\ee
\end{lem}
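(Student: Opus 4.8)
The plan is to read Lemma \ref{L2} off from Proposition \ref{IFT}, applied at the base point
\[
X^0 := (u_a^0,u_b^0,v_a^0,v_b^0,m^0) = (Q^0,\, 0,\, \widetilde Q_t^0,\, 0,\, \bt+i\al),
\]
with perturbed input data $u_a := Q^0 + z_b^0$ and $v_a := \widetilde Q_t^0 + w_b^0$. The output $(u_b,v_b,m)$ furnished by Proposition \ref{IFT} is then renamed $(y_a^0, y_a^1, m)$, and we set $q^0 := m-(\bt+i\al)$. The smallness hypothesis \eqref{ceroCond} holds because $\|u_a-u_a^0\|_{H^1}=\|z_b^0\|_{H^1}<\nu$ by \eqref{smallNU}; note Proposition \ref{IFT} imposes no condition on $v_a$ beyond $v_a\in H^{-1}(\R;\Com)$, which is clear since $\widetilde Q_t^0=-(\bt+i\al)^2Q^0\in H^1$ and $w_b^0\in H^{-1}$. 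Specializing \eqref{uno} and \eqref{dos} to $u_b^0=0$ gives exactly $\widetilde z_b^0+\widetilde y_a^0\in H^2$ and $\|y_a^0\|_{H^1}+|q^0|\le C\nu$, and the asserted dependence $y_a^0=y_a^0[z_b^0]$, $y_a^1=y_a^1[z_b^0,w_b^0]$ reflects that $(u_b,m)$ are found from $G_1=0$ (which involves only $u_a,u_b,m$) while $v_b=y_a^1$ is then read off from $G_2=0$ (which also sees $v_a$).

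The substance of the argument is verifying the hypotheses of Proposition \ref{IFT} at $X^0$. Conditions \eqref{rem0}, \eqref{Conds11}, \eqref{Conds12}, \eqref{Conds13} are short: $\re m^0=\bt>0$; the identity $G(X^0)=(0,0)$ is \eqref{zero0}--\eqref{zero1} of Lemma \ref{Equal0} with zero shifts (legitimate since \eqref{BadCondFF} fails there, by Lemma \ref{Good}); relation \eqref{zero0} gives $\sin(\widetilde Q^0/\sqrt2)=\frac{Q^0}{\sqrt2(\bt+i\al)}\in H^1$ because $Q^0=Q(\cdot;\al,\bt,0,0)\in H^1(\R;\Com)$ by Lemma \ref{Good}; and the boundary values $0$ and $\sqrt2\pi$ are \eqref{tQinfty}.

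The one step that needs an idea --- and the step I expect to be the main obstacle --- is exhibiting the auxiliary function $\mu^0$ of \eqref{Mu0}--\eqref{Cond111}. Instead of integrating $\mu^0=\exp(m^0\int_0^x\cos(\widetilde Q^0/\sqrt2))$ by hand, I would observe that $\sin(\widetilde Q^0/\sqrt2)$ already solves \eqref{Mu0}: differentiating and using $(\widetilde Q^0)_x=Q^0$ together with \eqref{zero0},
\[
\partial_x \sin\Big(\tfrac{\widetilde Q^0}{\sqrt2}\Big)=\cos\Big(\tfrac{\widetilde Q^0}{\sqrt2}\Big)\tfrac{Q^0}{\sqrt2}=(\bt+i\al)\cos\Big(\tfrac{\widetilde Q^0}{\sqrt2}\Big)\sin\Big(\tfrac{\widetilde Q^0}{\sqrt2}\Big).
\]
Since $\sin(\widetilde Q^0(0)/\sqrt2)=\sin(\tfrac\pi2)=1=\mu^0(0)$, uniqueness for this scalar linear ODE forces $\mu^0=\sin(\widetilde Q^0/\sqrt2)=\frac{Q^0}{\sqrt2(\bt+i\al)}$. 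From this closed form every requirement is transparent: $\mu^0\in H^1$ and $|\mu^0|>0$ since $Q^0\in H^1$ is nonvanishing on $\R$ (its denominator $1+e^{2(\bt+i\al)x}$ has no real zero), while $\mu^0_x/\mu^0=(\bt+i\al)\cos(\widetilde Q^0/\sqrt2)$ is bounded because $\cos(\widetilde Q^0/\sqrt2)\to\mp1$ at $\pm\infty$ by \eqref{3p16}. Finally the nondegeneracy \eqref{Cond111} collapses, through \eqref{zero0} and the mass identity \eqref{MassQ}, to
\[
\int_\R \sin\Big(\tfrac{\widetilde Q^0}{\sqrt2}\Big)\mu^0=\frac{1}{(\bt+i\al)^2}\int_\R\frac{(Q^0)^2}{2}=\frac{2(\bt+i\al)}{(\bt+i\al)^2}=\frac{2}{\bt+i\al}\neq0.
\]

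With $\mu^0$ constructed, Proposition \ref{IFT} applies verbatim and yields the existence, uniqueness, the stated form \eqref{deco}, and the estimates of Lemma \ref{L2}, with $\nu_0$ and $C$ depending only on $\al,\bt$ through $X^0$. Beyond the $\mu^0$ computation, the remaining care is purely bookkeeping: tracking that the given data $z_b^0,w_b^0$ occupy the $u_a,v_a$ slots of $G$ in \eqref{Goo} (so that $z_a^0,w_a^0$ there denote these same inputs), and that $u_b^0=0$ converts \eqref{dos} into the stated bound on $y_a^0$ and $q^0$.
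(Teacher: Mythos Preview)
Your proof is correct and follows essentially the same route as the paper: apply Proposition~\ref{IFT} at the base point $(Q^0,0,\widetilde Q_t^0,0,\bt+i\al)$, verify the standing hypotheses via Lemma~\ref{Equal0}, \eqref{tQinfty}, and Lemma~\ref{Good}, and identify $\mu^0$ as (a constant multiple of) $Q^0$, with the nondegeneracy \eqref{Cond111} coming from the mass identity \eqref{MassQ}. The only cosmetic difference is that the paper takes $\mu^0=Q^0$ directly (via \eqref{ecuQQ}) while you take $\mu^0=\sin(\widetilde Q^0/\sqrt2)=Q^0/(\sqrt2(\bt+i\al))$; since \eqref{Mu0} is linear homogeneous these differ by a nonzero constant and all verifications go through identically.
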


\begin{proof}
Let $Q^0$ be the soliton profile with parameters $\bt,\al$ and $x_1=x_2=0$ (cf. \eqref{Q00}). We apply Proposition \eqref{IFT} with 
\[
u_a^0 := Q^0, \quad u_b^0 := 0,  \quad v_a^0 := \widetilde Q_t^0, \quad v_b^0:=0,
\]
and
\[
  m^0 := \bt+ i\al. 
\]
Clearly $\widetilde u_a^0 + \widetilde u_b^0 = \widetilde Q_0 $ satisfies \eqref{Conds12}-\eqref{Conds13}.
 From \eqref{zero0} we have
\be\label{ecuQQ}
(Q^0)_x  -(\bt+i\al) \cos \Big(\frac{\widetilde Q^0}{\sqrt{2}} \Big) Q^0=0, \quad Q^0(-\infty) =0,
\ee
so that we have (cf. \eqref{Mu0}-\eqref{Cond110})
\[
\mu^0 = Q^0.
\]
Clearly $Q^0$ is never zero. Moreover, $|(Q^0)^{-1}Q^0_x|$ is bounded on $\R$. Now we prove that
\[
\int_\R \sin\Big(\frac{\widetilde Q^0}{\sqrt{2}}\Big)Q^0  \neq 0.
\]
From \eqref{zero0} and \eqref{MassQ},
\[
\int_\R \sin\Big(\frac{\widetilde Q^0}{\sqrt{2}} \Big)Q^0 = \frac1{\sqrt{2}(\bt+i\al)}\int_\R (Q^0)^2 =  \frac{4(\bt+i\al)}{\sqrt{2}(\bt+i\al)} = 2\sqrt{2}.
\]
\end{proof}

Before continuing, we need some definitions.  We denote
\be\label{ABStar}
\al^*:= \al + \ima q^0, \qquad \bt^* := \bt + \re q^0,
\ee
such that $m$ in \eqref{deco} satisfies
\[
m = \bt+ i\al + q^0 = \bt^* + i \al^*.
\]
Since $q^0$ is small, we have that $\bt^*$ and $\al^*$ are positive quantities. Similarly, define
\be\label{DGStar}
\delta^* := (\al^*)^2 -3(\bt^*)^2, \quad \ga^* := 3(\al^*)^2 -(\bt^*)^2,
\ee
and compare with \eqref{deltagamma}.

\medskip

Consider the profile $\widetilde Q$ introduced in \eqref{tQ}. We consider,  for all $t \in \R$, the complex soliton profile
\be\label{dinatQ}
\widetilde Q^*(t,x):= \widetilde Q(x;\al^*,\bt^*, \delta^* t +x_1, \ga^* t+x_2),
\ee
with $\delta^*$ and $\ga^*$ defined in \eqref{DGStar}, $x_1$ and $x_2$ \emph{possibly depending on time}, and
\be\label{dinaQ}
Q^*(t,x):= \partial_x \widetilde Q^*(t,x).
\ee
It is not difficult to see that (see e.g.  \eqref{Qc})
\[
Q^*(t,x) = Q_{c}(x-ct -\hat x), \quad \sqrt{c} =\bt^* + i\al^*, \quad \hat x  \in \Com,
\]
which is a \emph{complex-valued} solution of mKdV \eqref{mKdV}.  Technically, the complex soliton $Q^*(t)$ has velocity $-\ga^* = (\bt^*)^2 - 3(\al^*)^2$, a quantity that is always smaller than the corresponding speed $(= (\bt^*)^2)$ of  the associated real-valued soliton $Q_{(\bt^*)^2}$ obtained by sending $\al^*$ to zero. Finally, as in \eqref{tQt0}  we define
\[
\widetilde Q^*_t(t,x) :=  -(\bt^*+i\al^*)^2 Q^*(t,x).
\]

\begin{lem}\label{4p2}
Fix $\al,\bt>0$. Assume that $x_1,x_2$ are time dependent functions such that 
\be\label{sm}
|x_1'(t)| +|x_2'(t)| \ll |\delta^* -\ga^*| =2((\al^*)^2 +(\bt^*)^2).
\ee
Then there exists only a sequence of times $t_k \in \R$, $k\in \Z$ such that \eqref{BadCondFF} is satisfied.
In particular, $(t_k)$ is a sequence with no limit points. 
\end{lem}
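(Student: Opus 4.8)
The plan is to convert the spatial singularity condition \eqref{BadCondFF} for the \emph{moving} complex soliton $\widetilde Q^*$ into a single scalar equation in the time variable $t$, and then to read off the discreteness of its solution set from the near-rigid character of the motion imposed by \eqref{sm}.

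First I would identify the effective shift and scaling parameters of $\widetilde Q^*$. Comparing \eqref{dinatQ} with the static normalization \eqref{y1y2mod}, the profile $\widetilde Q^*(t,\cdot)$ is precisely the complex soliton with oscillatory scaling $\al^*$ and shifts $\delta^* t + x_1(t)$ and $\ga^* t + x_2(t)$. Hence \eqref{BadCondFF}, with $\al$ replaced by $\al^*$, holds at a time $t$ if and only if there is $k\in\Z$ with
\[
(\delta^* t + x_1(t)) - (\ga^* t + x_2(t)) = \frac{\pi}{\al^*}\Big(k+\tfrac12\Big).
\]
Setting $h(t):=(\delta^*-\ga^*)t + x_1(t) - x_2(t)$, the bad times are exactly the $t$ with $h(t)\in S$, where $S:=\{\tfrac{\pi}{\al^*}(k+\tfrac12): k\in\Z\}$ is the uniformly spaced bi-infinite set already appearing in the Remark after Lemma \ref{Good}.

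Next I would verify that $h$ is a $C^1$ diffeomorphism of $\R$ onto $\R$. Differentiating and using \eqref{DGStar} gives $h'(t)=(\delta^*-\ga^*)+(x_1'(t)-x_2'(t))$, while \eqref{sm} yields $|x_1'(t)-x_2'(t)|\le |x_1'(t)|+|x_2'(t)|\ll|\delta^*-\ga^*|$. Thus $h'$ never vanishes, retains the (negative) sign of $\delta^*-\ga^*$, and satisfies $\tfrac12|\delta^*-\ga^*|\le |h'(t)|\le 2|\delta^*-\ga^*|$ for all $t$. Consequently $h$ is strictly decreasing with $h(t)\to\pm\infty$ as $t\to\mp\infty$, hence a bijection of $\R$. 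So for each $k\in\Z$ there is a unique time $t_k$ solving $h(t_k)=\tfrac{\pi}{\al^*}(k+\tfrac12)$, and these exhaust the bad times, producing the asserted $\Z$-indexed sequence.

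Finally, for the absence of limit points I would argue by uniform separation. If $t\ne t'$ are two bad times, then $h(t),h(t')\in S$ forces $|h(t)-h(t')|\ge \pi/\al^*$, while the upper bound on $h'$ gives $|h(t)-h(t')|=\big|\int_{t'}^{t}h'(s)\,ds\big|\le 2|\delta^*-\ga^*|\,|t-t'|$; combining,
\[
|t-t'|\ \ge\ \frac{\pi}{2\al^*\,|\delta^*-\ga^*|}\ >0.
\]
Thus the bad times are uniformly spaced and form a discrete set with no limit points. The argument is elementary; the only point requiring a little care is the bookkeeping that correctly identifies the effective parameters $(\al^*,\delta^* t+x_1,\ga^* t+x_2)$ and confirms the surjectivity of $h$, but all the analytic content sits in the sign and size control of $h'$ furnished by \eqref{sm}.
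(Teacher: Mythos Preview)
Your proof is correct and follows essentially the same approach as the paper: both reduce the condition to the scalar equation $(\delta^*-\ga^*)t+(x_1-x_2)(t)=\frac{\pi}{\al^*}(k+\frac12)$ and use \eqref{sm} to guarantee that the left-hand side has nonvanishing derivative. The paper is terser, simply invoking the Inverse Function Theorem for each $k$, while you spell out the global diffeomorphism property of $h$ and the uniform separation estimate $|t-t'|\ge \pi/(2\al^*|\delta^*-\ga^*|)$; your version is in fact more complete on the ``no limit points'' claim.
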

\begin{proof}
Note that \eqref{BadCondFF} reads now
\[
(\delta^* -\ga^*)t_k +(x_1-x_2)(t_k) = \frac{\pi}{\al^*}\Big(k+\frac 12 \Big).
\]
Since from \eqref{DGStar}  $\delta^* -\ga^*  = -2((\al^*)^2 +(\bt^*)^2) \neq 0,$ and using \eqref{sm} and the Inverse Function Theorem applied for each $k$, we have the desired conclusion.
\end{proof}

We conclude that $\widetilde Q^*$ and $ Q^*$  defined in \eqref{dinatQ} and \eqref{dinaQ} are well-defined except for an isolated sequence of times $t_k$. We impose now the following condition:

\be\label{BadCond}
 t\in \R \hbox{ satisfies } t\neq t_k \hbox{ for all } k\in \Z.
\ee
\smallskip

In what follows we will solve the Cauchy problem associated to mKdV with suitable initial data.  Indeed, we will assume that 
\be\label{NiceCond}
\hbox{ $y_a^0$ is a real-valued function,  and $y_a^0 \in H^1(\R)$.}
\ee
We will need the following\footnote{We recall that this result is consequence of the local Cauchy theory and the conservation of mass and energy \eqref{M1}-\eqref{E1}.}

\begin{thm}[Kenig, Ponce and Vega \cite{KPV}]\label{KPV}
For any $y_a^0\in H^1(\R)$, there exists a unique\footnote{In a certain sense, see \cite{KPV}.} solution $y_a\in C(\R,H^1(\R))$ to mKdV, and
\[
\sup_{t\in \R}\|y_a(t)\|_{H^1(\R)} \leq C \|y_a^0\|_{H^1(\R)},
\]
with $C>0$ independent of time. Moreover, the \emph{mass}
\be\label{M1}
M[y_a](t)  :=  \frac 12 \int_\R y_a^2(t,x)dx = M[y_a^0],
\ee 
and \emph{energy} 
\be\label{E1}
E[y_a](t)  :=  \frac 12 \int_\R (y_a)_x^2(t,x)dx -\frac 14 \int_\R (y_a)^4(t,x)dx = E[y_a^0]
\ee
are conserved quantities.
\end{thm}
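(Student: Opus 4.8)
The plan is to assemble the statement from three ingredients: the local Cauchy theory of \cite{KPV}, the two conservation laws \eqref{M1} and \eqref{E1}, and a globalization argument driven by an a priori $H^1$ bound. The genuinely hard analytic input --- local well-posedness of \eqref{mKdV} in $H^1(\R)$ for real-valued data, together with uniqueness in the appropriate resolution space and continuous dependence on the data --- is precisely the content of \cite{KPV}, which I would take as given. It produces a solution $y_a \in C([-T,T];H^1(\R))$ on a time interval whose length $T$ depends \emph{only} on $\norm{y_a^0}_{H^1(\R)}$; this size-only dependence of the local existence time is the feature to be exploited in order to reach all of $\R$.

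First I would verify \eqref{M1} and \eqref{E1}. For data in $H^3(\R)$ the solution is regular enough to differentiate $M[y_a](t)$ and $E[y_a](t)$ in time; substituting \eqref{mKdV} in the form $(y_a)_t = -((y_a)_{xx}+y_a^3)_x$ and integrating by parts, all boundary terms vanish by decay and one finds $\frac{d}{dt}M[y_a] = \frac{d}{dt}E[y_a] = 0$. The general $H^1$ case then follows by approximating $y_a^0$ in $H^1(\R)$ by $H^3$ data and passing to the limit using the continuity of the flow map supplied by \cite{KPV}. Here real-valuedness is essential: $M$ and $E$ are the genuine $L^2$- and $H^1$-level invariants, in contrast with the complex-valued setting where such quantities need not be sign-definite.

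Next I would derive the a priori bound. Mass conservation gives $\norm{y_a(t)}_{L^2(\R)} = \norm{y_a^0}_{L^2(\R)}$ for every $t$. For the gradient I would combine \eqref{E1} with the one-dimensional Gagliardo--Nirenberg inequality $\norm{f}_{L^4(\R)}^4 \le C\norm{f}_{L^2(\R)}^3 \norm{f_x}_{L^2(\R)}$ and Young's inequality to absorb the quartic term into the kinetic one, obtaining $\norm{(y_a)_x(t)}_{L^2(\R)}^2 \le C\big(E[y_a^0] + M[y_a^0]^3\big)$, a bound depending only on the initial data. Since the exponent on $\norm{f_x}_{L^2(\R)}$ in this inequality is $1<2$, the absorption succeeds for data of arbitrary size, with no smallness or threshold required --- reflecting that the quartic energy term is $H^1$-subcritical. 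Combined with mass conservation this yields $\sup_{t\in\R}\norm{y_a(t)}_{H^1(\R)} \le C\norm{y_a^0}_{H^1(\R)}$.

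Finally, globalization. Because the local existence time from \cite{KPV} is bounded below by a decreasing function of the $H^1$ norm alone, and that norm is uniformly controlled in time by the previous step, the solution cannot leave $H^1(\R)$ in finite time: restarting the local theory from later (or earlier) times extends $y_a$ to all of $\R$, with the uniform bound persisting. Uniqueness on $\R$ is inherited from the local uniqueness statement. The only real obstacle is the local theory itself, resting on the delicate bilinear and smoothing estimates of \cite{KPV}; once that is invoked, the remaining steps are the routine conservation-law and Gagliardo--Nirenberg computations sketched above.
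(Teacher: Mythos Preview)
Your proposal is correct and matches the paper's treatment: the paper does not give its own proof of this theorem but simply cites \cite{KPV}, noting in a footnote that the result follows from the local Cauchy theory together with the conservation of mass and energy \eqref{M1}--\eqref{E1}. Your sketch --- local well-posedness from \cite{KPV} with existence time depending only on the $H^1$ norm, conservation laws by density, an a priori $H^1$ bound via Gagliardo--Nirenberg, and iteration --- is precisely the standard argument this footnote is alluding to.
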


\medskip

Assume \eqref{NiceCond}. Let $y_a \in C(\R,H^1(\R))$  denote the corresponding solution for mKdV with initial data $y_a^0$.  Since $\|y_a^0\|_{H^1} \leq C \eta$, we have for a (maybe different) constant $C>0$,
\be\label{MaMa}
\sup_{t\in \R}\|y_a(t)\|_{H^1(\R)} \leq C\eta.
\ee
In particular,  we can define, for all $t\in \R$,
\[
\widetilde y_a(t) := \int_0^x y_a(t,s)ds,
\]
and
\be\label{yt}
 (\widetilde y_a)_t (t) := -((y_a)_{xx}(t) + y_a^3(t)) \in H^{-1}(\R),
\ee
because $y_a(t) \in L^p(\R)$ for all $p\geq 2.$

\begin{lem}\label{L3}
Assume that a time $t \in \R$ and $y_a^0$ are such that \eqref{BadCond} and \eqref{NiceCond} hold.  Then there are unique $z_b=z_b(t) \in H^1(\R;\Com) $  and $w_b=w_b(t) \in H^{-1}(\R;\Com)$ such that for all $t\neq t_k$,

\be\label{H1ba}
\widetilde z_b+\widetilde y_a \in H^2(\R;\Com),
\ee

\be\label{312}
\frac 1{\sqrt{2}} (Q^* + z_b  -y_a) =(\bt + i\al + q^0) \sin\Big( \frac{\widetilde Q^*+\widetilde z_b+\widetilde y_a}{\sqrt{2}}\Big),
\ee
where $\widetilde Q$ and $Q$ are defined in \eqref{dinatQ} and \eqref{dinaQ}. Moreover, we have
\bea\label{412}
& & 0= \widetilde Q_t^*  + w_b - (\widetilde y_a)_t    \nonu\\
& &  \qquad + \, (\bt + i\al + q^0)  \Big[ (   Q_x^* + (z_b)_x + (y_a)_x) \cos \Big(\frac{\widetilde Q^* + \widetilde z_b +\widetilde y_a}{\sqrt{2}} \Big) \nonu\\
& & \qquad  \qquad \qquad \qquad \quad   + \frac { ( (Q^*+z_b)^2 + y_a^2)}{\sqrt{2}}\sin \Big( \frac{\widetilde Q^* + \widetilde z_b + \widetilde y_a}{\sqrt{2}}\Big) \Big],
\eea
and for all $t\neq t_k$,
\be\label{Cua2}
\|z_b(t)\|_{H^1(\R;\Com)}  <C \nu,
\ee
with $C$ divergent as $t$ approaches a $t_k$.
\end{lem}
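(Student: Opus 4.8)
The statement to prove is \lemref{L3}, which asserts the existence and uniqueness of $z_b(t) \in H^1(\R;\Com)$ and $w_b(t) \in H^{-1}(\R;\Com)$ solving the B\"acklund system \eqref{312}--\eqref{412} around the complex soliton profile $Q^*(t)$, given a real-valued small solution $y_a(t)$. The plan is to recognize this as a direct application of \textbf{Proposition \ref{IFT2}} (the second invertibility theorem) at each fixed time $t \neq t_k$. The roles in that Proposition are assigned as follows: the known quantity is $u_b = y_a(t)$ (the real-valued mKdV solution), and we solve for $u_a = Q^*(t) + z_b(t)$, with the fixed parameter being $m = \bt + i\al + q^0 = \bt^* + i\al^*$ determined in \lemref{L2}. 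The base point $X^0$ is built from the complex soliton: $u_a^0 = Q^*(t)$, $u_b^0 = 0$, $v_a^0 = \widetilde Q_t^*(t)$, $v_b^0 = 0$, and $m^0 = \bt + i\al + q^0$.

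\medskip

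**Verifying the hypotheses of Proposition \ref{IFT2}.** First I would check that the base point $X^0 = (Q^*, 0, \widetilde Q_t^*, 0, \bt^*+i\al^*)$ satisfies the structural conditions. Condition \eqref{rem0} ($\re m^0 = \bt^* > 0$) holds because $q^0$ is small and $\bt > 0$. Condition \eqref{Conds11}, $G(X^0) = (0,0)$, is precisely \eqref{zero0}--\eqref{zero1} from \lemref{Equal0} applied to the profile $Q^*$ with scalings $\al^*,\bt^*$. Conditions \eqref{Conds12} (that $\sin(\widetilde Q^*/\sqrt2) \in H^1$) and \eqref{Conds13} (the boundary limits $0$ and $\sqrt2\pi$) follow from \eqref{tQinfty} and the explicit exponential structure of $\widetilde Q$ in \eqref{tQ}, valid exactly because \eqref{BadCond} excludes the singular times $t_k$. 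The key nontrivial hypothesis is the construction of the auxiliary solution $\mu^1$ of the ODE \eqref{Mu2} satisfying \eqref{Cond212}. Here I would observe that, by analogy with the choice $\mu^0 = Q^0$ made in \lemref{L2}, the equation \eqref{ecuQQ} shows that $Q^*$ itself solves $(Q^*)_x - m \cos(\widetilde Q^*/\sqrt2) Q^* = 0$; hence $\mu^1 = 1/Q^*$ solves \eqref{Mu2}, and since $Q^*$ never vanishes with $|(Q^*)^{-1}(Q^*)_x|$ bounded, the requirements $|\mu^1| > 0$, $|\mu^1_x/\mu^1| \le C$, and $1/\mu^1 = Q^* \in H^1(\R;\Com)$ are all met for each $t \neq t_k$.

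\medskip

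**Transferring the conclusions.** With all hypotheses verified, Proposition \ref{IFT2} (applied with $\nu$ governing the smallness $\|y_a(t)\|_{H^1} \le C\eta < \nu$, recalling $u_b^0 = 0$, so \eqref{Smallness} reads $\|y_a(t)\|_{H^1} + 0 < \nu$) yields a unique $u_a(t) = Q^*(t) + z_b(t)$ with $\|z_b(t)\|_{H^1(\R;\Com)} < C\nu$, which is \eqref{Cua2} after renaming. The identity \eqref{cero}, namely $G(u_a, u_b, v_a, v_b, m) = (0,0)$, unpacks via definitions \eqref{G1}--\eqref{G2} into exactly \eqref{312} and \eqref{412}, where $w_b$ plays the role of $v_a - v_b = y_a^1 - (\text{the }v_b\text{ term})$; the distributional quantity $w_b \in H^{-1}$ is then defined by \eqref{412} itself. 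The regularity \eqref{H1ba}, $\widetilde z_b + \widetilde y_a \in H^2$, is the content of \eqref{Conds234} in the shifted variable $\widetilde u_c = \widetilde z_b + \widetilde y_a$, and the boundary limits are \eqref{Conds23a}.

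\medskip

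**The main obstacle.** The genuinely delicate point is the \emph{uniformity and divergence} of the constant $C$ as $t \to t_k$, stated at the end of \lemref{L3}. The constant in Proposition \ref{IFT2} depends on the $H^1$-size and lower bounds of $\mu^1 = 1/Q^*$ (equivalently on the control of $Q^*$ away from zero). As $t$ approaches a singular time $t_k$ where \eqref{BadCondFF} is attained, the profile $Q^*(t)$ develops a pole on the real axis, so $\|\mu^1\|$-type norms and the inverse bounds degenerate — precisely the blow-up of $\|\mu^0\|_{L^2}$ flagged in the footnote after \eqref{nullity}. I would therefore emphasize that the implicit function argument is applied \emph{pointwise in $t$} on the complement of $\{t_k\}$, with $\nu = \nu(t)$ shrinking (equivalently $C = C(t)$ growing) as $\dist(t, \{t_k\}) \to 0$; the lemma only claims a finite bound for each fixed admissible $t$, not a uniform one, which is consistent with the later strategy (described in the introduction) of running a separate bootstrap argument to cross each singular time.
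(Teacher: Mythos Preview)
Your proposal is correct and follows essentially the same approach as the paper: apply Proposition~\ref{IFT2} at the base point $X^0=(Q^*,0,\widetilde Q_t^*,0,\bt^*+i\al^*)$, identify $\mu^1=(Q^*)^{-1}$ via \eqref{ecuQQ}, and invoke the global smallness bound \eqref{MaMa} on $y_a(t)$ to feed \eqref{Smallness}. Your discussion of the degeneration of the constant as $t\to t_k$ is also the same mechanism the paper points to (the blow-up of the relevant norm of $\mu^1$, equivalently of $Q^*$, at the singular times).
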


\begin{proof}
We will use Proposition \ref{IFT2}.  For that it is enough to recall that from \eqref{zero0} and \eqref{zero1}, and for all $t\neq t_k,$\footnote{It is interesting to note that the shifts $x_1,x_2$ on $Q^*(t,x)$ cannot be modified, otherwise there is no continuity at $t=0$.}
\be\label{GQStar}
\frac 1{\sqrt{2}} Q^* =(\bt + i\al + q^0) \sin\Big( \frac{\widetilde Q^*}{\sqrt{2}}\Big),
\ee
and
\[
 \widetilde Q_t ^*   + (\bt+ i\al +q^0)  \Big[  Q_x^*  \cos \Big(\frac{\widetilde Q^*}{\sqrt{2}} \Big)   + \frac { (Q^*)^2}{\sqrt{2}}\sin \Big( \frac{\widetilde Q^*}{\sqrt{2}} \Big) \Big]  =0,
\]
so that we can apply Proposition \ref{IFT2} at $X^0 =(Q^*, 0, \widetilde Q^*_t,0,m)$, where from \eqref{GQStar} we have  $m = (\bt + i\al + q^0)$. It is not difficult to see that the function $\mu^1$ in \eqref{Mu2} is given by
\[
\mu^1 = (Q^*)^{-1},
\]
and \eqref{Cond212} is satisfied. Note that we require the estimate \eqref{MaMa} in order to obtain \eqref{312}-\eqref{412}.  Finally, \eqref{Cua2} is a direct consequence of \eqref{Cua} and the fact that $\norm{\mu^1}_{H^1(\R;\Com)}$ blows up as $t$ approaches some $t_k$.
\end{proof}

\begin{rem}
Since from \eqref{Goo} we get
\[
\frac 1{\sqrt{2}} (Q^0 + z_b^0 -y_a^0)  =(\bt + i\al + q^0) \sin\Big( \frac{\widetilde{Q}^0 + \widetilde{z_b}^0 + \widetilde{y_a}^0}{\sqrt{2}}\Big),
\]
we have that \eqref{312} implies by uniqueness
\[
(Q^* + z_b-y_a)(t=0) = Q^0 + z_b^0 -y_a^0,
\]
i.e.,
\[
(Q^* + z_b)(t=0) = Q^0 + z_b^0 .
\]
\end{rem}

We are ready to prove a detailed version of Theorem \ref{T2}, a result on complex-valued solitons.

\begin{thm}\label{T2a}
There exists $\nu_0>0$ such that for all $0<\nu<\nu_0$ the following holds. Consider the initial data $u_b^0:= Q^0 + z_b^0 \in H^1(\R;\Com)$, where
\[
\|z_b^0\|_{H^1(\R;\Com)} < \nu.
\] 
Assume in addition that the corresponding function $y_a^0$ given by Lemma \ref{L2} is real-valued and belongs to $H^1(\R)$. Fix $\ve_0>0$. Then for all $t$  such that $|t-t_k|\geq \ve_0$, with $t_k$ defined in Lemma \ref{4p2}, the function  $u_b = Q^*+z_b$ introduced in Lemma \ref{L3}  is an $H^1$ complex valued solution of mKdV, it satisfies $(u_b)_t = (Q^*+z_b)_t = (\tilde Q_t ^*+ w_b)_x$ and the following estimate holds
\be\label{ComplexStab}
\sup_{|t-t_k|\geq \ve_0}\| u_b(t) - Q^*(t)\|_{H^1(\R;\Com)} \leq C_{\ve_0} \nu.
\ee
\end{thm}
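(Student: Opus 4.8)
The plan is to exploit the \emph{double} B\"acklund structure already set up in the previous lemmas: invert the complex soliton once to land on a small \emph{real-valued} mKdV solution, evolve that solution globally by the Kenig--Ponce--Vega theory, and then invert a second time to reconstruct the perturbed complex soliton at every time away from the blow-up set $\{t_k\}$. The point is that the real intermediate solution never leaves $H^1(\R)$, so all the difficulty is concentrated in the two (static, time-by-time) inversions and in verifying that the reconstructed object genuinely solves \eqref{mKdV}.

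First I would feed the initial datum $u_b^0 = Q^0 + z_b^0$ into Lemma \ref{L2}, which produces the companion datum $y_a^0$ and the parameter $m = \bt + i\al + q^0 = \bt^* + i\al^*$, with $\|y_a^0\|_{H^1(\R;\Com)} + |q^0| \le C\nu$. Under the standing hypothesis \eqref{NiceCond} that $y_a^0$ is real-valued and lies in $H^1(\R)$, Theorem \ref{KPV} supplies a unique \emph{global} real-valued solution $y_a \in C(\R, H^1(\R))$ of mKdV with $\sup_{t}\|y_a(t)\|_{H^1(\R)} \le C\nu$, so smallness persists for all time. At this stage the scalings $\al^*,\bt^*$, and hence the singular times $t_k$ of Lemma \ref{4p2}, are frozen, determined solely by $q^0$ and thus by the initial data.

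Next, for each $t$ satisfying \eqref{BadCond}, I would run the reverse B\"acklund transformation of Lemma \ref{L3}, which applies Proposition \ref{IFT2} around $X^0 = (Q^*, 0, \widetilde Q_t^*, 0, m)$ with $\mu^1 = (Q^*)^{-1}$; this reconstructs $z_b(t) \in H^1(\R;\Com)$ and $w_b(t)\in H^{-1}(\R;\Com)$ obeying the algebraic identities \eqref{312}--\eqref{412} together with the bound \eqref{Cua2}. To upgrade these pointwise-in-time relations to the assertion that $u_b := Q^* + z_b$ actually \emph{solves} mKdV, I would invoke Theorem \ref{Cauchy1} on each interval $(t_k, t_{k+1})$: the pair $(u_a, u_b) = (Q^* + z_b,\, y_a)$ is exactly the B\"acklund pair for the fixed $m$, and since $y_a$ solves mKdV, that theorem returns $u_b \in C(I; H^1(\R;\Com))$, the identity $(u_b)_t = (\widetilde Q_t^* + w_b)_x$ (here $\widetilde Q_t^* + w_b$ is precisely the momentum partner $v_a$ appearing in \eqref{412}), and the mKdV equation for $u_b$. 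Correct initialization $u_b(0) = u_b^0$ follows from uniqueness together with the Remark preceding the statement, which identifies $(Q^*+z_b)(0) = Q^0 + z_b^0$.

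The stability estimate \eqref{ComplexStab} is then immediate from $u_b - Q^* = z_b$ and \eqref{Cua2}: on $\{|t-t_k|\ge \ve_0\}$ the constant in \eqref{Cua2} is bounded by some finite $C_{\ve_0}$, and this bound can be taken uniform in $k$ because $Q^*$ is a rigid translate of a fixed profile, so the singular structure recurs identically (up to translation) near each $t_k$. The main obstacle is exactly this loss of uniformity as $t \to t_k$: the back-transformation constant is governed by $\|\mu^1\|_{H^1(\R;\Com)} = \|(Q^*)^{-1}\|_{H^1(\R;\Com)}$, which is the very quantity that degenerates as \eqref{BadCondFF} is approached, so no estimate uniform \emph{up to} $t_k$ is available in the $H^1$ norm. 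This is the structural reason the theorem is stated only on $|t-t_k|\ge \ve_0$, and why the genuinely all-time content of Theorem \ref{T2} has to be carried by the invariant mass and energy rather than by the $H^1(\R;\Com)$ norm itself.
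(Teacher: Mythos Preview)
Your argument is correct and follows essentially the same route as the paper: apply Lemma \ref{L2} to obtain the small real seed $y_a^0$, evolve it globally via Theorem \ref{KPV}, rebuild $z_b(t)$ time-by-time through Lemma \ref{L3}, and close with Theorem \ref{Cauchy1} to certify that $Q^*+z_b$ solves mKdV. Your additional remarks on why $C_{\ve_0}$ is uniform in $k$ (rigid translation of the $Q^*$ profile) and on the degeneration of $\|(Q^*)^{-1}\|_{H^1}$ at $t_k$ are correct and go slightly beyond what the paper spells out.
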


\begin{rem}
The quantity $\ve_0>0$ is just an auxiliary parameter and it can be made as small as required; however the constant $C_{\ve_0}$ in \eqref{ComplexStab} becomes singular as $\ve_0$ approaches zero.
\end{rem}

\begin{rem}
In Corollary \ref{RealY} we will prove that there is an open set in $H^1(\R;\Com)$ leading to $y_a^0$ real-valued. The openness of this set will be a consequence of the Implicit Function Theorem.
\end{rem}

\begin{proof}
We apply Lemma \ref{L2}. Assuming \eqref{NiceCond} we have $y_a^0$ real-valued so that there is an mKdV dynamics $y_a(t)$  constructed in Theorem \ref{KPV}. Finally we apply Lemma \ref{L3} to obtain the dynamical function $Q^*(t)+z_b(t)$. Finally we conclude using Theorem \ref{Cauchy1}.
\end{proof}

Now we will prove that the mass and energy
\be\label{massQ}
\frac 12\int_\R u_b^2(t), \qquad \frac 12 \int_\R (u_b)_x^2(t) -\frac 14 \int_\R u_b^4(t),
\ee
remain conserved for all time, without using the mKdV equation \eqref{mKdV}, but only the B\"acklund transformation \eqref{312}. The fact that $\widetilde z_b + \widetilde y_a \in H^1(\R;\Com)$ will be essential for the proof.

\begin{cor}\label{CLaw0}
Assume that $t\neq t_k$  for all $k\in \Z$. Then the quantity
\be\label{Ident1}
\frac 12 \int_\R (Q^*+ z_b )^2(t)
\ee
is well-defined and independent of time, and
\be\label{Ident2}
\frac 12 \int_\R (Q^*+z_b)^2(t) = \frac 12 \int_\R (y^0_a)^2 + 2(\bt + i\al + q^0).
\ee
Moreover, \eqref{Ident2} can be extended in a continuous form to every $t\in \R.$ 
\end{cor}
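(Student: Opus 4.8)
The plan is to exploit the exact-derivative structure encoded in the B\"acklund identity \eqref{312}, in the same spirit as the elementary mass computation \eqref{N1}--\eqref{MassQ}. Write $u_b := Q^* + z_b$, set $m := \bt + i\al + q^0$, and introduce the phase $\Phi := \tfrac{1}{\sqrt 2}(\widetilde Q^* + \widetilde z_b + \widetilde y_a)$. Then \eqref{312} is precisely $u_b - y_a = \sqrt 2\, m \sin\Phi$, while differentiating the definition of $\Phi$ (using $(\widetilde z_b)_x = z_b$, $(\widetilde y_a)_x = y_a$, $(\widetilde Q^*)_x = Q^*$) gives $u_b + y_a = \sqrt 2\, \Phi_x$. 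Multiplying these two relations, I obtain the pointwise identity
\[
u_b^2 - y_a^2 = (u_b - y_a)(u_b + y_a) = 2m\,\sin\Phi\,\Phi_x = -2m\,\partial_x\!\left(\cos\Phi\right).
\]
This is the crux: the difference of the two mass densities is a total derivative with explicit coefficient $m$.

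Next I would integrate over $\R$. For $t\neq t_k$ both $Q^*(t)$ and $z_b(t)$ belong to $H^1(\R;\Com)$ (Lemma \ref{Good} together with \eqref{Cua2}) and $y_a(t)\in H^1(\R)$, so $u_b^2 - y_a^2 \in L^1(\R)$ and hence $\partial_x(\cos\Phi)\in L^1(\R)$. The boundary values of $\cos\Phi$ are fixed by the asymptotics of its argument: since $\widetilde z_b + \widetilde y_a \in H^2(\R;\Com)$ vanishes at $\pm\infty$ (cf. \eqref{H1ba}) while $\widetilde Q^*$ carries the kink limits \eqref{tQinfty}, one has $\widetilde Q^* + \widetilde z_b + \widetilde y_a \to 0$ as $x\to -\infty$ and $\to \sqrt 2\,\pi$ as $x\to +\infty$, so that $\Phi(-\infty) = 0$, $\Phi(+\infty) = \pi$, and $\cos\Phi \to 1, -1$ respectively. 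By the fundamental theorem of calculus applied to the absolutely continuous function $\cos\Phi$ with $L^1$ derivative,
\[
\tfrac12\int_\R (u_b^2 - y_a^2) = -m\big(\cos\Phi(+\infty) - \cos\Phi(-\infty)\big) = -m(-1 - 1) = 2m.
\]

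Finally I would bring in the real-valued dynamics. Since $y_a$ solves the real mKdV equation with data $y_a^0$, Theorem \ref{KPV} gives $\tfrac12\int_\R y_a^2(t) = \tfrac12\int_\R (y_a^0)^2$ for every $t$; combining with the previous display yields $\tfrac12\int_\R u_b^2(t) = \tfrac12\int_\R (y_a^0)^2 + 2(\bt + i\al + q^0)$, which is \eqref{Ident2}. As the right-hand side is manifestly independent of $t$, the quantity \eqref{Ident1} is constant on $\{t\neq t_k\}$, and its continuous extension to all of $\R$ is simply this constant value. The one point demanding genuine care---the main obstacle---is the boundary term: I must ensure that $\cos\Phi$ truly admits limits at $\pm\infty$ and that the integration by parts loses no mass near a singular time $t_k$. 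Both are guaranteed once $t\neq t_k$ is fixed, because then $\widetilde Q^* + \widetilde z_b + \widetilde y_a$ lies in the correct function class with the prescribed kink limits; the divergence of the constant in \eqref{Cua2} as $t\to t_k$ is irrelevant, since the identity is established pointwise in $t$ before any limit is taken.
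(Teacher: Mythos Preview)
Your argument is correct and is essentially the paper's own proof: multiply the B\"acklund relation \eqref{312} by $\tfrac{1}{\sqrt 2}(Q^*+z_b+y_a)$ to recognize $(Q^*+z_b)^2 - y_a^2$ as the total derivative $-2m\,\partial_x\cos\Phi$, integrate using the kink limits \eqref{tQinfty} together with $\widetilde z_b+\widetilde y_a\to 0$ at $\pm\infty$, and invoke mass conservation for $y_a$. Your notation $u_b$, $m$, $\Phi$ makes the computation cleaner, but the route is identical.
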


\begin{proof}
Using \eqref{312} and multiplying each side by $ \frac 1{\sqrt{2}}(Q^* + z_b + y_a)$ we obtain
\[
\frac1{2} (Q^* +z_b -y_a)(Q^* + z_b - y_a) = -(\bt +i\al +q^0)  \Big[\cos \Big(\frac{\widetilde Q^* +\widetilde z_b+ \widetilde y_a}{\sqrt{2}}\Big)\Big]_x .
\]
We integrate on $\R$ to obtain
\[
\frac1{2}  \int_\R(Q^* +z_b -y_a)(Q^*+ z_b - y_a) =- (\bt +i\al +q^0) \cos \Big(\frac{\widetilde Q^* + \widetilde z_b +\widetilde y_a}{\sqrt{2}}\Big) \Bigg|_{-\infty}^\infty .
\]
Since $\lim_{-\infty} \widetilde Q^* =0$, $\lim_{+\infty} \widetilde Q^* = \sqrt{2}\pi$ (see \eqref{tQinfty}) and $\lim_{\pm \infty}( \widetilde z_b +\widetilde y_a) =0$,  we get \eqref{Ident1}-\eqref{Ident2}, because the mass of $y_a(t)$ is conserved.
\end{proof}

\begin{cor}\label{CLaw3}
Assume that $t\neq t_k$  for all $k\in \Z$. Then the quantity
\be\label{Ident5}
E[Q^*+ z_b] (t):= \frac 12 \int_\R (Q^*+ z_b )_x^2(t)  -\frac 14 \int_\R (Q^*+ z_b )^4(t) 
\ee
is well-defined and independent of time. Moreover, it satisfies
\[
\frac 12\int_\R  (Q^* + z_b)^2_x(t)  -\frac14 \int_\R (Q^* + z_b)^4(t)   =E[y_a^0]  - \frac23 (\bt^* + i\al^*)^3.
\]
Finally, this quantity can be extended in a continuous form to every $t\in \R.$ 
\end{cor}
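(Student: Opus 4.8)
The plan is to mimic the proof of Corollary \ref{CLaw0}, but now extracting the \emph{energy} rather than the mass from the B\"acklund relation \eqref{312}. The key algebraic fact is that the two conserved quantities for the $u_b$-dynamics are tied, through the transformation, to the corresponding quantities for the real-valued solution $y_a$, which are genuinely conserved by Theorem \ref{KPV}. So the strategy is: (i) produce a pointwise identity, valid for each fixed $t\neq t_k$, expressing the integrand of $E[Q^*+z_b]$ as a total $x$-derivative plus terms involving only $y_a$ and the soliton profile $Q^*$; ( only then (ii) integrate over $\R$ using the boundary behavior $\lim_{-\infty}(\widetilde z_b+\widetilde y_a)=0$, $\lim_{+\infty}(\widetilde z_b+\widetilde y_a)=0$ together with \eqref{tQinfty}; and (iii) identify the resulting boundary contribution as $E[y_a^0]-\tfrac23(\bt^*+i\al^*)^3$ and invoke conservation of $E[y_a]$.

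\textbf{Step (i): the differentiated B\"acklund relation.} First I would differentiate \eqref{312} in $x$ to produce an expression for $(Q^*+z_b)_x-(y_a)_x$ in terms of $\cos(\cdots)$ times $(Q^*+z_b+y_a)$, exactly as in \eqref{Pivote0}. This, combined with \eqref{312} itself and with the algebraic identity $(Q^*+z_b)^2-(Q^*+z_b)(y_a)+y_a^2 = \tfrac12[(Q^*+z_b)^2+y_a^2]+\tfrac12(Q^*+z_b-y_a)^2$, is precisely the kind of manipulation performed in the proof of Theorem \ref{Cauchy1} (see the computation of $v_a-v_b$ there). The goal is to show that
\[
\tfrac12(Q^*+z_b)_x^2-\tfrac14(Q^*+z_b)^4-\Big[\tfrac12(y_a)_x^2-\tfrac14 y_a^4\Big]
\]
equals an exact $x$-derivative $[\,\Phi\,]_x$, where $\Phi$ is built from $Q^*+z_b$, $y_a$ and the trigonometric factor $\cos\big((\widetilde Q^*+\widetilde z_b+\widetilde y_a)/\sqrt2\big)$. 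I would look for $\Phi$ by combining the mass-type cancellation already exploited in Corollary \ref{CLaw0} with the next conservation law of mKdV; the natural candidate for $\Phi$ involves $m(Q^*+z_b)_x\cos(\cdots)$ and lower-order products, and one verifies the claimed derivative identity by using \eqref{312} to eliminate the $\sin(\cdots)$ factors whenever they appear after differentiation.

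\textbf{Steps (ii)--(iii): integration and evaluation.} After integrating the derivative identity over $\R$, the left side gives $E[Q^*+z_b](t)-E[y_a](t)$, and the right side reduces to the boundary values of $\Phi$. Here the decay $z_b,y_a,(z_b)_x,(y_a)_x\to0$ and $\widetilde z_b+\widetilde y_a\to0$ kill all perturbative contributions, so only the pure-soliton part survives; that part is a fixed number computable from $Q^*$ and $\widetilde Q^*$ alone, using \eqref{tQinfty} and \eqref{Qx2}/\eqref{ecQ} (with $(\bt+i\al)$ replaced by $m=\bt^*+i\al^*$). One expects this boundary term to be exactly $-\tfrac23(\bt^*+i\al^*)^3$, consistent with the fact that the energy of the complex soliton $Q^*$ itself is $-\tfrac23(\bt^*+i\al^*)^3$; indeed the cleanest route is to first check the identity on the reference configuration $z_b=0$, $y_a=0$ (where $u_b=Q^*$ and the formula must read $E[Q^*]=-\tfrac23(\bt^*+i\al^*)^3$, computable directly from \eqref{Qx2}), and then argue that the perturbation adds precisely $E[y_a^0]$. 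Using $E[y_a](t)=E[y_a^0]$ from Theorem \ref{KPV} then yields the stated formula, and the final continuous extension to every $t\in\R$ follows from the continuity of $Q^*+z_b$ in $H^1$ away from the $t_k$ together with the fact that the right-hand side is manifestly time-independent.

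\textbf{Main obstacle.} I expect the principal difficulty to be identifying the correct total-derivative structure $\Phi$ in Step (i): unlike the mass, whose integrand collapses immediately to $[\cos(\cdots)]_x$ after a single multiplication, the energy requires combining the first \emph{and} second spatial derivatives of \eqref{312} and repeatedly re-substituting the B\"acklund relation to cancel non-integrable or non-derivative terms. Keeping careful track of which terms are genuine $x$-derivatives versus which must be handled by the conserved energy of $y_a$ is the delicate point; the positive sign and the precise constant $\tfrac23$ in $-\tfrac23(\bt^*+i\al^*)^3$ are the places where a computational error is most likely, so I would pin them down by the $z_b=y_a=0$ consistency check described above before trusting the general identity.
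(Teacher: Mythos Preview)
Your overall strategy matches the paper's: differentiate \eqref{312}, combine it with the undifferentiated relation, push everything to total derivatives, integrate, and read off the constant from the boundary values of $\cos\big((\widetilde Q^*+\widetilde z_b+\widetilde y_a)/\sqrt2\big)$, then conclude by conservation of $E[y_a]$. Your sanity check $E[Q^*]=-\tfrac23 m^3$ via \eqref{Qx2} is a good idea and is consistent with the outcome.

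Where your sketch diverges from the paper is in the packaging of Step~(i). You look for a single primitive $\Phi$ with
\[
\tfrac12(Q^*+z_b)_x^2-\tfrac14(Q^*+z_b)^4-\big[\tfrac12(y_a)_x^2-\tfrac14 y_a^4\big]=\Phi_x,
\]
and you anticipate needing the \emph{second} $x$-derivative of \eqref{312}. The paper avoids both. It multiplies the differentiated relation \eqref{312x} by $(Q^*+z_b)_x+(y_a)_x$ and, after two further uses of \eqref{312} and \eqref{312x} (no second derivative), obtains the pointwise identity
\[
(Q^*+z_b)_x^2-(y_a)_x^2
= m\cos\Big(\tfrac{\widetilde Q^*+\widetilde z_b+\widetilde y_a}{\sqrt2}\Big)\big[(Q^*+z_b)^2+y_a^2\big]_x
+\Big[\tfrac{2}{3}m^3\cos^3\Big(\tfrac{\widetilde Q^*+\widetilde z_b+\widetilde y_a}{\sqrt2}\Big)\Big]_x.
\]
Only the second term is an honest total derivative; the first is handled by one integration by parts, and the resulting bulk term is converted, via \eqref{312} once more, into exactly $\tfrac12\int\big[(Q^*+z_b)^4-y_a^4\big]$. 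The boundary contribution $\tfrac23 m^3\cos^3(\cdot)\big|_{-\infty}^{+\infty}=-\tfrac43 m^3$ then yields the constant. So the ``quartic'' part of the energy is not hidden inside $\Phi$; it is \emph{produced} by the integration by parts. If you try to force a single $\Phi$ you will end up re-deriving this same structure, but the paper's route is shorter and needs only the first derivative of the B\"acklund relation.
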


\begin{proof}
Denote $m=(\bt + i\al + q^0). $ From \eqref{312} we have
\be\label{312x}
 (Q^* + z_b)_x  -(y_a)_x =m \cos\Big( \frac{\widetilde Q^*+\widetilde z_b+\widetilde y_a}{\sqrt{2}}\Big) (Q^* + z_b +y_a).
\ee
Multiplying by $ (Q^* + z_b)_x  + (y_a)_x$ we get
\bea
 (Q^* + z_b)^2_x  -(y_a)^2_x  & = &m \cos\Big( \frac{\widetilde Q^*+\widetilde z_b+\widetilde y_a}{\sqrt{2}}\Big) (Q^* + z_b +y_a) (Q^* + z_b +y_a)_x \nonu \\
 & =& m \cos\Big( \frac{\widetilde Q^*+\widetilde z_b+\widetilde y_a}{\sqrt{2}}\Big)  \Big[  (Q^* + z_b )   (Q^* + z_b )_x + y_a (y_a)_x   \nonu \\
 & & \qquad \qquad \qquad  y_a  (Q^* + z_b )_x + (Q^* + z_b ) (y_a)_x \Big]. \label{Ene1}
\eea
On the other hand, we multiply \eqref{312x} by $y_a$ and $(Q^* + z_b )$ to obtain
\[
y_a (Q^* + z_b)_x  - y_a (y_a)_x =m \cos\Big( \frac{\widetilde Q^*+\widetilde z_b+\widetilde y_a}{\sqrt{2}}\Big) (Q^* + z_b +y_a) y_a,
\]
and
\[
(Q^* + z_b )(Q^* + z_b)_x  -  (Q^* + z_b )(y_a)_x =m \cos\Big( \frac{\widetilde Q^*+\widetilde z_b+\widetilde y_a}{\sqrt{2}}\Big) (Q^* + z_b +y_a)(Q^* + z_b ).
\]
If we subtract the latter from the former we get
\bea
y_a (Q^* + z_b)_x + (Q^* + z_b ) (y_a)_x  &=&  (Q^* + z_b )(Q^* + z_b)_x + y_a (y_a)_x \nonu  \\
& &  + m \cos\Big( \frac{\widetilde Q^*+\widetilde z_b+\widetilde y_a}{\sqrt{2}}\Big) [ y_a^2 -(Q^* + z_b )^2 ]. \nonu\\
& & \label{Ene2}
\eea
Replacing \eqref{Ene2} into \eqref{Ene1},
\bea
 (Q^* + z_b)^2_x  -(y_a)^2_x  & = & m \cos\Big( \frac{\widetilde Q^*+\widetilde z_b+\widetilde y_a}{\sqrt{2}}\Big) [ (Q^* + z_b )^2 + y_a^2]_x   \nonu \\
 & & +  m^2  \cos^2 \Big( \frac{\widetilde Q^*+\widetilde z_b+\widetilde y_a}{\sqrt{2}}\Big) [  y_a^2 -(Q^* + z_b )^2  ]. \label{Ene3}
\eea
Finally we use once again \eqref{312}. We multiply by $(Q^* + z_b  + y_a)$: 
\[
\frac 1{\sqrt{2}} [(Q^* + z_b)^2  -y_a^2] =m \sin\Big( \frac{\widetilde Q^*+\widetilde z_b+\widetilde y_a}{\sqrt{2}}\Big)(Q^* + z_b  + y_a).
\]
Replacing in \eqref{Ene3} we finally arrive to the following identity:
\bea
& &  (Q^* + z_b)^2_x  -(y_a)^2_x   = \nonu\\
& & \qquad =  m \cos\Big( \frac{\widetilde Q^*+\widetilde z_b+\widetilde y_a}{\sqrt{2}}\Big) [ (Q^* + z_b )^2 + y_a^2]_x   \nonu \\
 & & \qquad \quad   -  m^3\sqrt{2}  \cos^2 \Big( \frac{\widetilde Q^*+\widetilde z_b+\widetilde y_a}{\sqrt{2}}\Big) \sin\Big( \frac{\widetilde Q^*+\widetilde z_b+\widetilde y_a}{\sqrt{2}}\Big)(Q^* + z_b  + y_a).\nonu
\eea
The last term on the right hand side above can be recognized as a total derivative. After integration, we obtain
\bee
\int_\R [ (Q^* + z_b)^2_x  -(y_a)^2_x ] & = & m \int_\R \cos\Big( \frac{\widetilde Q^*+\widetilde z_b+\widetilde y_a}{\sqrt{2}}\Big) [ (Q^* + z_b )^2 + y_a^2]_x   \nonu \\
 & &  + \frac23 m^3  \cos^3 \Big( \frac{\widetilde Q^*+\widetilde z_b+\widetilde y_a}{\sqrt{2}}\Big)\Big|_{-\infty}^{+\infty} \\
 & =& \frac m{\sqrt{2}} \int_\R \sin\Big( \frac{\widetilde Q^*+\widetilde z_b+\widetilde y_a}{\sqrt{2}}\Big) (Q^* + z_b  + y_a) [ (Q^* + z_b )^2 + y_a^2]   \nonu \\
 & &   - \frac43 m^3  \\
 & =& \frac 1{2} \int_\R [ (Q^* + z_b)^2  - y_a^2] [ (Q^* + z_b )^2 + y_a^2]  - \frac43 m^3 \\
 & =& \frac12 \int_\R [(Q^* + z_b)^4  - y_a^4 ] - \frac43 m^3.
\eee
Finally
\[
\frac 12\int_\R  (Q^* + z_b)^2_x  -\frac14 \int_\R (Q^* + z_b)^4   = \frac 12\int_\R (y_a)^2_x  -\frac 14 \int_\R y_a^4  - \frac23 (\bt + i\al + q^0)^3.
\]
Since the right hand side above is conserved for all time, we have proved \eqref{Ident5}.
\end{proof}

\bigskip

\section{Complex solitons versus breathers}\label{5}

\medskip

We introduce now the notion of breather profile. Given parameters  $x_1 ,x_2 \in \R$ and $\al, \bt>0$, we consider $y_1$ and $y_2$ defined in \eqref{y1y2mod}. Let $\tilde B$ be the kink profile
\be\label{tB}
\widetilde B = \widetilde B(x; \al,\bt, x_1, x_2) := 2\sqrt{2} \arctan \Big( \frac{\bt}{\al} \frac{\sin(\al y_1)}{\cosh(\bt y_2)}\Big),
\ee
and with a slight abuse of notation, we redefine
\be\label{BBB}
B:=  \widetilde B_x.
\ee
Note that
\be\label{tBinfty}
\widetilde B(-\infty) =\widetilde B(+\infty) =0,
\ee
and for $k\in \Z,$
\be\label{perio2}
\begin{cases}
\displaystyle{\widetilde B(x; \al,\bt, x_1 + \frac{k \pi}\al, x_2) =(-1)^k \widetilde B(x; \al,\bt, x_1, x_2), }\\
\hbox{\;}\\
\displaystyle{B(x; \al,\bt, x_1 + \frac{k \pi}\al, x_2) =(-1)^k  B(x; \al,\bt, x_1 , x_2).}
\end{cases}
\ee

\medskip

Now we introduce the directions associated to the shifts $ x_1$ and $ x_2$. Given a breather profile of parameters $ \al$, $\bt$, $ x_1$ and $ x_2$, we define
\be\label{B1}
B_1 = B_1 (x; \al, \bt,  x_1, x_2)  := \partial_{x_1} B ,
\ee
\be\label{B2}
B_2 =B_2 (x; \al, \bt,  x_1, x_2)  := \partial_{ x_2}  B .
\ee
and for $\delta$ and $\gamma$ defined in \eqref{deltagamma}, 
\be\label{tBt}
\widetilde B_t := \delta B_1 +\ga B_2.
\ee
We also have
\be\label{tBteq}
\widetilde B_t + B_{xx} + B^3 =0,
\ee
see \cite{AM} for a proof of this identity. 

\medskip

If $ x_1$ or $ x_2$ are time-dependent variables, we assume that the associated $B_j$ corresponds to the partial derivative with respect to the time-independent variable $ x_j$, evaluated at $ x_j(t)$.

\medskip

In this Section we will prove that there is a deep interplay between complex solitons and  breather profiles. We start with the following identities.

\begin{lem}\label{Equal}
Let $(B,Q)$ be a pair breather-soliton profiles with scaling parameters $\al,\bt>0$ and shifts $x_1,x_2\in \R$. Assume that \eqref{BadCondFF} is not satisfied. Then we have
\be\label{zerot}
\frac{(B-Q)}{\sqrt{2}}  - (\bt-i\al) \sin \Big(\frac{\widetilde B+\widetilde Q}{\sqrt{2}}\Big) \equiv 0 ,
\ee
and
\be\label{zerot2}
\widetilde B_t -\widetilde Q_t    + (\bt-i\al)  \Big[ ( B_x + Q_x) \cos \Big(\frac{\widetilde B+\widetilde Q}{\sqrt{2}} \Big)   + \frac { (B^2 + Q^2)}{\sqrt{2}}\sin \Big( \frac{\widetilde B+\widetilde Q}{\sqrt{2}} \Big) \Big] \equiv 0.
\ee
\end{lem}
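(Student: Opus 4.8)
The two identities assert precisely that $G(B,Q,\widetilde B_t,\widetilde Q_t,\bt-i\al)\equiv(0,0)$, where $G$ is the B\"acklund operator of \eqref{G1}--\eqref{G2}; in other words, the breather is obtained from the complex soliton $Q$ by a B\"acklund transformation with the \emph{conjugate} parameter $\bt-i\al$, in perfect analogy with Lemma~\ref{Equal0} (which corresponds to the parameter $\bt+i\al$ and the pair $(Q,0)$). My plan is to prove the first-order identity \eqref{zerot} by a direct computation, and then to deduce the evolution identity \eqref{zerot2} from \eqref{zerot} by differentiation alone, with no further explicit calculation.

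For \eqref{zerot} I would set $p:=\frac{\bt}{\al}\frac{\sin(\al y_1)}{\cosh(\bt y_2)}$ and $q:=e^{\bt y_2+i\al y_1}$, so that $\widetilde B=2\sqrt2\arctan p$ and $\widetilde Q=2\sqrt2\arctan q$ by \eqref{tB} and \eqref{tQ}, whence $B=2\sqrt2\,p_x/(1+p^2)$ and $Q=2\sqrt2\,q_x/(1+q^2)$. Using the addition formula for the arctangent together with $\sin(2\arctan r)=2r/(1+r^2)$ (both valid as identities of analytic functions, so that the branch is irrelevant after applying $\sin(2\cdot)$), one first obtains
\[
\sin\Big(\frac{\widetilde B+\widetilde Q}{\sqrt2}\Big)=\frac{2(p+q)(1-pq)}{(1-pq)^2+(p+q)^2}=\frac{2(p+q)(1-pq)}{(1+p^2)(1+q^2)},
\]
the last step using $(1-pq)^2+(p+q)^2=(1+p^2)(1+q^2)$. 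Since $\frac{B-Q}{\sqrt2}=2\,\frac{p_x(1+q^2)-q_x(1+p^2)}{(1+p^2)(1+q^2)}$, the claim \eqref{zerot} reduces to the pointwise identity
\[
p_x(1+q^2)-q_x(1+p^2)=(\bt-i\al)(p+q)(1-pq),
\]
where $q_x=(\bt+i\al)q$ and $p_x=\frac{\bt\cos(\al y_1)}{\cosh(\bt y_2)}-\frac{\bt^2}{\al}\frac{\sin(\al y_1)\sinh(\bt y_2)}{\cosh^2(\bt y_2)}$; writing $q=(\cosh(\bt y_2)+\sinh(\bt y_2))(\cos(\al y_1)+i\sin(\al y_1))$, one verifies it by expanding everything into monomials in $\cos(\al y_1),\sin(\al y_1),\cosh(\bt y_2),\sinh(\bt y_2)$ and using $\cos^2+\sin^2=1$ and $\cosh^2-\sinh^2=1$.

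For \eqref{zerot2} I would argue purely by differentiation. Differentiating \eqref{zerot} in $x$ and using $\partial_x(\widetilde B+\widetilde Q)=B+Q$ gives $B_x-Q_x=(\bt-i\al)\cos\big(\tfrac{\widetilde B+\widetilde Q}{\sqrt2}\big)(B+Q)$, and differentiating once more yields
\[
B_{xx}-Q_{xx}=(\bt-i\al)\cos\Big(\frac{\widetilde B+\widetilde Q}{\sqrt2}\Big)(B_x+Q_x)-(\bt-i\al)\frac{(B+Q)^2}{\sqrt2}\sin\Big(\frac{\widetilde B+\widetilde Q}{\sqrt2}\Big).
\]
On the other hand \eqref{tBteq} gives $\widetilde B_t=-(B_{xx}+B^3)$, while \eqref{tQt0} with the profile equation \eqref{ecQ} gives $\widetilde Q_t=-(\bt+i\al)^2Q=-(Q_{xx}+Q^3)$, so $\widetilde B_t-\widetilde Q_t=-(B_{xx}-Q_{xx})-(B^3-Q^3)$. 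Substituting the second-derivative identity, the $\cos$-terms match those in \eqref{zerot2}, and what remains is $(\bt-i\al)\frac{(B+Q)^2+(B^2+Q^2)}{\sqrt2}\sin\big(\tfrac{\widetilde B+\widetilde Q}{\sqrt2}\big)=B^3-Q^3$; since $(B+Q)^2+(B^2+Q^2)=2(B^2+BQ+Q^2)$ and $(\bt-i\al)\sin\big(\tfrac{\widetilde B+\widetilde Q}{\sqrt2}\big)=\frac{B-Q}{\sqrt2}$ by \eqref{zerot}, the left-hand side equals $(B^2+BQ+Q^2)(B-Q)=B^3-Q^3$, closing \eqref{zerot2}.

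The main obstacle is the algebraic identity in the first step: although the arctangent-addition reduction makes its shape transparent, its verification requires a careful expansion of $p_x$ and of the powers of $q$ into trigonometric and hyperbolic monomials, and this is the one genuinely computational point. Everything else is formal: once \eqref{zerot} is established, \eqref{zerot2} follows from it using only the two evolution identities \eqref{tBteq} and \eqref{tQt0}--\eqref{ecQ} and the cubic factorization $B^3-Q^3=(B-Q)(B^2+BQ+Q^2)$.
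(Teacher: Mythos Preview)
Your proposal is correct and follows essentially the same route as the paper: the proof of \eqref{zerot2} from \eqref{zerot} via two differentiations, the identities $\widetilde B_t=-(B_{xx}+B^3)$ and $\widetilde Q_t=-(Q_{xx}+Q^3)$, and the cubic factorization is exactly the argument given in the body of the lemma, while your reduction of \eqref{zerot} to the identity $p_x(1+q^2)-q_x(1+p^2)=(\bt-i\al)(p+q)(1-pq)$ coincides (after factoring $(p+q)(1-pq)$) with the identity \eqref{Tri2} of Appendix~\ref{AppA}, which the paper then verifies by the same kind of trigonometric/hyperbolic expansion you describe. The only cosmetic difference is that you obtain the formula for $\sin\big(\tfrac{\widetilde B+\widetilde Q}{\sqrt2}\big)$ via the arctangent addition formula rather than by expanding $\sin(2(\arctan\Theta_1+\arctan\Theta_2))$ directly, but both yield the same expression.
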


\begin{proof}
Let us assume \eqref{zerot} and prove \eqref{zerot2}. We have from \eqref{tQt0} and \eqref{ecQ}
\[
\widetilde Q_t = -(\bt+i\al)^2 Q = -(Q_{xx} + Q^3).
\]
Using \eqref{zerot} we have
\[
B_x-Q_x  - (\bt-i\al)(B+Q) \cos \Big(\frac{\widetilde B+\widetilde Q}{\sqrt{2}}\Big) = 0 ,
\] 
and
\bee
& & B_{xx}-Q_{xx}  - (\bt-i\al) (B_x+Q_x) \cos \Big(\frac{\widetilde B+\widetilde Q}{\sqrt{2}}\Big)\\
& & \qquad + (\bt-i\al) \frac{(B+Q)^2}{\sqrt{2}}\sin \Big(\frac{\widetilde B+\widetilde Q}{\sqrt{2}}\Big) = 0 ,
\eee
so that using once again \eqref{zerot} and \eqref{tBteq} 
\bee
& & \widetilde B_t -\widetilde Q_t    + (\bt-i\al)  \Big[ ( B_x + Q_x) \cos \Big(\frac{\widetilde B+\widetilde Q}{\sqrt{2}} \Big)  
+ \frac { (B^2 + Q^2)}{\sqrt{2}}\sin \Big( \frac{\widetilde B+\widetilde Q}{\sqrt{2}} \Big) \Big] \\
& & = -(B_{xx} + B^3) + Q_{xx} + Q^3   +\Big[  B_{xx}-Q_{xx} + (\bt-i\al) \frac{(B+Q)^2}{\sqrt{2}}\sin \Big(\frac{\widetilde B+\widetilde Q}{\sqrt{2}}\Big) \Big] \\
& &  \qquad +  (\bt-i\al) \frac { (B^2 + Q^2)}{\sqrt{2}}\sin \Big( \frac{\widetilde B+\widetilde Q}{\sqrt{2}} \Big) \\
& & = Q^3-B^3 + \sqrt{2}(\bt-i\al)(B^2 +Q^2 + BQ) \sin \Big(\frac{\widetilde B+\widetilde Q}{\sqrt{2}}\Big) \\
& & =  Q^3-B^3 + (B^2 +Q^2 + BQ)(B-Q)  =0.
\eee 

\smallskip

The proof of \eqref{zerot} is a tedious but straightforward computation which deeply requires the nature of the breather and soliton profiles.  For the proof of this result, see Appendix \ref{AppA}.
\end{proof}

\begin{cor}\label{Cor0a}
Under the assumptions of Lemma \ref{Equal}, for any $x \in \R$ one has
\[
\frac{(B- \overline{Q})}{\sqrt{2}}  - (\bt + i\al) \sin \Big(\frac{\widetilde B+\widetilde{\overline Q}}{\sqrt{2}}\Big) \equiv 0  \quad \hbox{ in } \R,
\]
where $\overline{Q}$ is the complex-valued soliton with parameters $\bt$ and $-\al$.
\end{cor}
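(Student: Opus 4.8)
The plan is to read off the corollary from Lemma \ref{Equal} by exploiting the behaviour of the three profiles under the reflection $\al\mapsto-\al$.

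First I would record the relevant symmetries, all immediate from the explicit formulas. In \eqref{tB} the argument $\tfrac{\bt}{\al}\tfrac{\sin(\al y_1)}{\cosh(\bt y_2)}$ is even in $\al$, so $\widetilde B$, and hence $B=\widetilde B_x$, are invariant under $\al\mapsto-\al$. In \eqref{Q0} and \eqref{tQ}, sending $\al\mapsto-\al$ merely conjugates the exponent $\bt y_2+i\al y_1$; since $x,y_1,y_2,\al,\bt$ are real and $\arctan$ has real Taylor coefficients (so that $\arctan(\bar z)=\overline{\arctan z}$), this yields $Q(x;-\al,\bt,x_1,x_2)=\overline Q$ and $\widetilde Q(x;-\al,\bt,x_1,x_2)=\widetilde{\overline Q}$, which is exactly the object $\overline Q$ named in the statement.

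Next I would apply the identity \eqref{zerot} of Lemma \ref{Equal} with $\al$ replaced by $-\al$. Then the coefficient becomes $\bt-i(-\al)=\bt+i\al$, the difference $B-Q$ becomes $B-\overline Q$, and the phase $\widetilde B+\widetilde Q$ becomes $\widetilde B+\widetilde{\overline Q}$, giving at once
\[
\frac{(B-\overline Q)}{\sqrt{2}}-(\bt+i\al)\sin\Big(\frac{\widetilde B+\widetilde{\overline Q}}{\sqrt{2}}\Big)\equiv 0 .
\]

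The only delicate point, and the main obstacle, is that Lemma \ref{Equal} is stated for $\al>0$ while I invoke it at the negative first scaling $-\al$. This is harmless, and can be justified in either of two ways. The identity \eqref{zerot} is proved in Appendix \ref{AppA} by a pointwise manipulation of the explicit profiles that uses only $\al\neq0$ together with the non-degeneracy \eqref{BadCondFF}, never the sign of $\al$, so the same computation holds verbatim at $-\al$. Alternatively, for fixed $x\neq-x_2$ the left-hand side of \eqref{zerot} is real-analytic in the first scaling on all of $\R$---there the denominator $\cosh^2(\bt y_2)-\sin^2(\al y_1)$ stays strictly positive for every value of the scaling---and it vanishes for all positive values by Lemma \ref{Equal}, hence also at $-\al$ by analytic continuation; the single remaining point $x=-x_2$ is then covered by continuity in $x$, using that the $-\al$ profiles are everywhere continuous by Lemma \ref{Good} once \eqref{BadCondFF} fails (a condition even in $\al$).
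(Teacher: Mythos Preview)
Your argument is correct, and the paper gives no explicit proof for this corollary, leaving it as an immediate consequence of Lemma~\ref{Equal}. Your route via the substitution $\al\mapsto-\al$ works, but it creates an artificial obstacle (the sign restriction $\al>0$) that you then have to talk your way around. The more direct reading---and almost certainly what the authors have in mind---is simply to take the complex conjugate of \eqref{zerot}. Since $B$ and $\widetilde B$ are real-valued, $\bt,\al\in\R$, and $\overline{\sin z}=\sin\bar z$, conjugating
\[
\frac{(B-Q)}{\sqrt{2}}  - (\bt-i\al) \sin \Big(\frac{\widetilde B+\widetilde Q}{\sqrt{2}}\Big)=0
\]
gives the statement in one line, with no need to revisit Appendix~\ref{AppA} or invoke analytic continuation. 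Your symmetry observations are exactly what make this conjugation work, so the content is the same; the difference is only in packaging.
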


In order to prove some results in the next Section, we need several additional identities. 

\begin{cor}\label{Cor1}
Under the assumptions of Lemma \ref{Equal}, for any $x \in \R$ one has
\[
 \cos \Big(\frac{\widetilde B+\widetilde Q}{\sqrt{2}}\Big) = 1 - \frac1{2(\bt-i\al)} \int_{-\infty}^x (B^2-Q^2) ,
\]
and
\[
\lim_{x\to \pm \infty}\cos \Big(\frac{\widetilde B+\widetilde Q}{\sqrt{2}}\Big)(x) =  \mp 1.
\]
\end{cor}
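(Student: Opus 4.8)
The plan is to derive both assertions from the single algebraic identity \eqref{zerot} of Lemma \ref{Equal}, combined with the known endpoint behaviour of the kink profiles $\widetilde B$ and $\widetilde Q$. Set $\theta := (\widetilde B+\widetilde Q)/\sqrt{2}$. The first move is to differentiate $\cos\theta$ in $x$. Since $\partial_x\widetilde B = B$ and $\partial_x\widetilde Q = Q$ (recall \eqref{BBB} and the definition of $Q$ in \eqref{Q0}), one has $\theta_x = (B+Q)/\sqrt{2}$, so the chain rule gives
\[
\partial_x \cos\Big(\frac{\widetilde B+\widetilde Q}{\sqrt{2}}\Big) = -\sin\Big(\frac{\widetilde B+\widetilde Q}{\sqrt{2}}\Big)\,\frac{B+Q}{\sqrt{2}}.
\]

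Next I would substitute the value of the sine supplied by \eqref{zerot}, namely $\sin\theta = (B-Q)/\big(\sqrt{2}\,(\bt-i\al)\big)$. This converts the previous expression into
\[
\partial_x \cos\Big(\frac{\widetilde B+\widetilde Q}{\sqrt{2}}\Big) = -\frac{(B-Q)(B+Q)}{2(\bt-i\al)} = -\frac{B^2-Q^2}{2(\bt-i\al)},
\]
which is manifestly an exact $x$-derivative. Integrating from $-\infty$ to $x$ and using that $\widetilde B(-\infty)=\widetilde Q(-\infty)=0$ (see \eqref{tBinfty} and \eqref{tQinfty}), so that $\cos\theta \to \cos 0 = 1$ as $x\to-\infty$, yields precisely the claimed formula
\[
\cos\Big(\frac{\widetilde B+\widetilde Q}{\sqrt{2}}\Big)(x) = 1 - \frac{1}{2(\bt-i\al)}\int_{-\infty}^x (B^2-Q^2).
\]

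For the two limits I would simply read off the endpoint values of $\theta$. At $-\infty$ we have already used $\theta\to 0$, giving $\cos\theta\to 1$. At $+\infty$, identity \eqref{tBinfty} gives $\widetilde B\to 0$ while \eqref{tQinfty} gives $\widetilde Q\to\sqrt{2}\pi$, hence $\theta\to\pi$ and $\cos\theta\to -1$; together these are the asserted values $\mp 1$.

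The only point needing a word of justification—and the nearest thing to an obstacle—is the convergence of the improper integral $\int_{-\infty}^x (B^2-Q^2)$, in particular its existence at $x=+\infty$. This follows from the spatial localization of both profiles: $B^2$ is integrable because the breather decays exponentially, and $\int_\R Q^2 = 4(\bt+i\al)$ by \eqref{MassQ}. As a \emph{consistency check}, evaluating the formula just obtained at $x=+\infty$ (where the left side equals $-1$) forces $\int_\R (B^2-Q^2) = 4(\bt-i\al)$, equivalently $\int_\R B^2 = 8\bt$, which is coherent with $B$ being real-valued and localized. No genuinely hard step arises; the whole proof is a one-line differentiation followed by an integration fixed by the boundary data.
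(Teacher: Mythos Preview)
Your proof is correct and follows essentially the same approach as the paper: both multiply identity \eqref{zerot} by $(B+Q)/\sqrt{2}$ (equivalently, differentiate $\cos\theta$ and substitute for $\sin\theta$), recognize the result as $\partial_x\cos\big((\widetilde B+\widetilde Q)/\sqrt{2}\big)$, and integrate from $-\infty$ using the boundary values \eqref{tQinfty}, \eqref{tBinfty}. Your additional remarks on integrability and the consistency check at $+\infty$ are sound extras not present in the paper's terser version.
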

\begin{proof}
We multiply by $\frac1{\sqrt{2}} (B + Q)$ in \eqref{zerot}. We get
\[
\frac1{2} (B^2-Q^2) -  (\bt-i\al) \sin \Big(\frac{\widetilde B+\widetilde Q}{\sqrt{2}}\Big) \times \frac1{\sqrt{2}} (B + Q) =0,
\]
i.e.,
\[
\frac1{2} (B^2-Q^2) +  (\bt-i\al) \partial_x \cos \Big(\frac{\widetilde B+\widetilde Q}{\sqrt{2}}\Big) =0.
\]
Since from \eqref{tQ} and \eqref{tB} one has
\[ 
\lim_{x\to -\infty} \cos \Big(\frac{\widetilde B+\widetilde Q}{\sqrt{2}}\Big) = 1.
\]
Therefore, after integration,
\[
\frac1{2} \int_{-\infty}^x (B^2-Q^2) +  (\bt-i\al) \Big[  \cos \Big(\frac{\widetilde B+\widetilde Q}{\sqrt{2}}\Big)  -1\Big] =0,
\]
as desired.
\end{proof}

\begin{lem}
We have
\bee
 \mathcal M  &:= & \frac 12\int_{-\infty}^x  B^2 \nonu\\
 & =& 2\bt \Big[ 1+  \frac{2\al (\bt \sin(2\al y_1) +\al \sinh(2\bt y_2)) }{\al^2 +\bt^2  - \bt^2 \cos(2\al y_1) +\al^2 \cosh (2\bt y_2) }  \Big] .
\eee
\end{lem}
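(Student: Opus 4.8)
The cleanest plan is to not guess an antiderivative, but to read $\mathcal M$ off the B\"acklund identity of Corollary \ref{Cor1}. Rewriting that corollary gives
\[
\frac12\int_{-\infty}^x (B^2-Q^2)=(\bt-i\al)\Big[1-\cos\Big(\tfrac{\widetilde B+\widetilde Q}{\sqrt2}\Big)\Big],
\]
so, adding the closed form of $\mathcal N=\tfrac12\int_{-\infty}^x Q^2$ from \eqref{N1},
\[
\mathcal M=\frac12\int_{-\infty}^x B^2=\mathcal N+(\bt-i\al)\Big[1-\cos\Big(\tfrac{\widetilde B+\widetilde Q}{\sqrt2}\Big)\Big].
\]
Since $B$ is real-valued the left side is real, hence $\mathcal M$ is the real part of the right side; equivalently, averaging with the $\overline Q$-version from Corollary \ref{Cor0a} (scalings $\bt,-\al$) already yields a manifestly real expression. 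The additive constant is then fixed by the limit $x\to-\infty$: there $\widetilde B\to0$ and $\widetilde Q\to0$ by \eqref{tBinfty} and \eqref{tQinfty}, so $\cos(\cdot)\to1$ and $\mathcal N\to0$, forcing $\mathcal M\to0$ as it must.

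The one genuine computation is to make $\cos\big((\widetilde B+\widetilde Q)/\sqrt2\big)$ explicit. I would use $\tfrac{\widetilde B}{\sqrt2}=2\arctan g$ with $g=\tfrac{\bt}{\al}\tfrac{\sin(\al y_1)}{\cosh(\bt y_2)}$ real, and $\tfrac{\widetilde Q}{\sqrt2}=2\arctan w$ with $w=e^{\bt y_2+i\al y_1}$, see \eqref{tB} and \eqref{tQ}. The double-angle identities give $\cos(\widetilde B/\sqrt2)=\tfrac{1-g^2}{1+g^2}$ and $\sin(\widetilde B/\sqrt2)=\tfrac{2g}{1+g^2}$, and likewise for $\widetilde Q$ with $w$ in place of $g$; the addition formula then collapses to the key identity
\[
1-\cos\Big(\tfrac{\widetilde B+\widetilde Q}{\sqrt2}\Big)=\frac{2(g+w)^2}{(1+g^2)(1+w^2)}.
\]
Substituting this and $\mathcal N=\tfrac{2(\bt+i\al)w^2}{1+w^2}$, putting everything over $(1+g^2)(1+w^2)$, replacing $w=e^{\bt y_2}(\cos(\al y_1)+i\sin(\al y_1))$, and taking the real part should collapse to the stated quotient. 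The target denominator is no accident: $\al^2+\bt^2-\bt^2\cos(2\al y_1)+\al^2\cosh(2\bt y_2)=2\big(\al^2\cosh^2(\bt y_2)+\bt^2\sin^2(\al y_1)\big)$ via $\cos(2\al y_1)=1-2\sin^2(\al y_1)$ and $\cosh(2\bt y_2)=2\cosh^2(\bt y_2)-1$, which is exactly $2\al^2\cosh^2(\bt y_2)(1+g^2)$, i.e.\ the real denominator emerging after clearing $1+g^2$ and rationalizing $1+w^2$.

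A fully self-contained alternative, mirroring the proof of \eqref{N1}, is to differentiate the claimed right-hand side in $x$ (recall $\partial_x y_1=\partial_x y_2=1$) and check that it equals $\tfrac12 B^2$, with
\[
B=\frac{2\sqrt2\,\al\bt\big[\al\cos(\al y_1)\cosh(\bt y_2)-\bt\sin(\al y_1)\sinh(\bt y_2)\big]}{\al^2\cosh^2(\bt y_2)+\bt^2\sin^2(\al y_1)}.
\]
Clearing the denominator $\big(\al^2\cosh^2(\bt y_2)+\bt^2\sin^2(\al y_1)\big)^2$ reduces $\partial_x(\text{RHS})=\tfrac12 B^2$ to a polynomial identity in $\sin(\al y_1),\cos(\al y_1),\sinh(\bt y_2),\cosh(\bt y_2)$, verifiable after expanding the double angles; the $x\to-\infty$ limit again pins the constant.

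The main obstacle is purely algebraic bookkeeping rather than any conceptual point: in the B\"acklund route it is the real-part extraction and the recognition of the common denominator after substituting $w$, while in the direct route it is the polynomial simplification after squaring the denominator. I expect the most economical write-up to combine the two — use Corollary \ref{Cor1} together with the identity for $1-\cos$ to obtain $\mathcal M$ up to an additive constant, and then fix that constant by the explicit $x\to-\infty$ value.
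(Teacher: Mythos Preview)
Your approach is sound, but note that the paper does not actually prove this lemma at all: the entire proof reads ``See e.g.\ \cite{AM}.'' So there is nothing to compare against here beyond observing that you are supplying an argument where the paper merely cites one.

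Your B\"acklund route is correct and non-circular: Corollary~\ref{Cor1} precedes this lemma and depends only on Lemma~\ref{Equal}, not on the formula for $\mathcal M$. The key identity
\[
1-\cos\Big(\tfrac{\widetilde B+\widetilde Q}{\sqrt2}\Big)=\frac{2(g+w)^2}{(1+g^2)(1+w^2)}
\]
is easily checked (numerator $(1+g^2)(1+w^2)-(1-g^2)(1-w^2)+4gw=2(g+w)^2$), and since $B$ is real the real-part extraction is legitimate. Your observation that the target denominator equals $2\al^2\cosh^2(\bt y_2)(1+g^2)$ is the right organizing principle for the remaining algebra. The direct-differentiation alternative is also valid and is likely closer in spirit to what \cite{AM} does, since that reference derives the breather's conserved quantities by explicit computation rather than via B\"acklund identities.

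The only caveat is that your write-up is a sketch: phrases like ``should collapse to the stated quotient'' and ``verifiable after expanding the double angles'' defer the decisive algebra. Nothing conceptual is missing, but if you want a self-contained proof you will need to actually carry out either the real-part extraction or the polynomial identity check rather than assert that it works.
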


\begin{proof}
See e.g. \cite{AM}.
\end{proof}

The following result is not difficult to prove.

\begin{cor}\label{Cor0}
We have
\be\label{LogL0}
\int_0^x\mathcal M  =  2\bt x + \log(\al^2 +\bt^2  - \bt^2 \cos(2\al y_1) +\al^2 \cosh (2\bt y_2) ) -L_0,
\ee
where
\[
L_0:=  \log(\al^2 +\bt^2  - \bt^2 \cos(2\al x_1) +\al^2 \cosh (2\bt  x_2) ).
\]
\end{cor}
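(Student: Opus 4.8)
The plan is to establish \eqref{LogL0} by differentiation: I will exhibit the claimed right-hand side as an explicit primitive of the integrand $\mathcal M$ furnished by the previous lemma, and pin down the constant of integration by evaluating at $x=0$. The whole point is a single algebraic observation, so no analytic machinery is needed.

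First I would isolate the denominator occurring in the formula for $\mathcal M$, namely
\[
D(x) := \al^2 + \bt^2 - \bt^2 \cos(2\al y_1) + \al^2 \cosh(2\bt y_2),
\]
with $y_1 = x+x_1$ and $y_2 = x+x_2$ as in \eqref{y1y2mod}, so that $y_1' = y_2' = 1$. The key remark is that the numerator appearing in $\mathcal M$ is, up to a multiplicative constant, exactly $D'$: differentiating directly gives
\[
D'(x) = 2\al\bt\big(\bt \sin(2\al y_1) + \al \sinh(2\bt y_2)\big).
\]
Hence the bracketed fraction in the expression for $\mathcal M$ is proportional to the logarithmic derivative $D'/D$, and $\mathcal M$ takes the shape of the constant $2\bt$ plus a multiple of $(\log D)'$. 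Integrating term by term over $[0,x]$, the constant $2\bt$ produces the affine term $2\bt x$, while the logarithmic part produces $\log D(x) - \log D(0)$. Since at $x=0$ one has $y_1 = x_1$ and $y_2 = x_2$, the quantity $\log D(0)$ is precisely the constant $L_0$ of the statement, and collecting the three contributions yields \eqref{LogL0}.

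The computation is routine; the only points deserving attention are the exact constant relating the fractional part of $\mathcal M$ to $D'/D$ (so that the coefficient of $\log D$ comes out to $1$, which is most safely checked by differentiating the proposed right-hand side and comparing with the lemma's expression for $\mathcal M$), and the legitimacy of writing $\log D$. For the latter I would note that $\cosh \geq 1$ and $-\cos \geq -1$ force $D(x) \geq \al^2 + \bt^2 - \bt^2 + \al^2 = 2\al^2 > 0$ for every $x$, so that $D$ never vanishes and the primitive $\log D$ is globally smooth with no branch ambiguity. No boundary or decay considerations enter, since the identity is between a definite integral starting at $0$ and an explicit function, with the constant of integration fixed automatically by the value at $x=0$.
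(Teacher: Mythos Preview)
Your approach is correct and is precisely what the paper intends: it states that the corollary ``is not difficult to prove'' and omits any argument, so the expected verification is exactly the differentiation-and-evaluate-at-zero check you outline. Your positivity observation $D(x)\ge 2\al^2>0$ justifying the global use of $\log D$ is a nice touch the paper does not make explicit.
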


\medskip

\begin{cor}\label{CosT}
Under the assumptions of Lemma \ref{Equal}, we have
\bee
-(\bt-i\al) \int_0^x \cos \Big(\frac{\widetilde B+\widetilde Q}{\sqrt{2}}\Big) &=&   (\bt+i \al) x \\
& &  +\log(\al^2 +\bt^2  - \bt^2 \cos(2\al y_1) +\al^2 \cosh (2\bt  y_2) ) \\
& &  -  \log (1+ e^{2\bt y_2 + 2i\al y_1 })  -L_0 +L_1  , 
\eee
with $L_0$ and $L_1$ defined in \eqref{LogL0} and \eqref{LogL}.
\end{cor}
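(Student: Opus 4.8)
The plan is to integrate the pointwise identity from Corollary~\ref{Cor1} and then substitute the two explicit antiderivatives already computed. First I would multiply the identity of Corollary~\ref{Cor1} through by $-(\bt-i\al)$, which gives, for each $s\in\R$,
\[
-(\bt-i\al)\cos\Big(\frac{\widetilde B+\widetilde Q}{\sqrt{2}}\Big)(s) = -(\bt-i\al) + \frac12\int_{-\infty}^s (B^2-Q^2).
\]
Recognizing $\frac12\int_{-\infty}^s B^2 = \mathcal M(s)$ and $\frac12\int_{-\infty}^s Q^2 = \mathcal N(s)$, the right-hand side becomes $-(\bt-i\al) + \mathcal M - \mathcal N$, so the integrand is now expressed entirely through quantities for which closed forms are available.

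Next I would integrate this identity in $s$ over $[0,x]$. The constant term contributes $-(\bt-i\al)x$, while the remaining part is $\int_0^x\mathcal M - \int_0^x\mathcal N$. At this stage I invoke the two previously established formulas: Corollary~\ref{Cor0} supplies $\int_0^x\mathcal M = 2\bt x + \log(\al^2+\bt^2 - \bt^2\cos(2\al y_1) + \al^2\cosh(2\bt y_2)) - L_0$, and \eqref{LogL} supplies $\int_0^x\mathcal N = \log(1 + e^{2(\bt y_2 + i\al y_1)}) - L_1$. Substituting both reproduces the right-hand side of the claimed formula, up to the linear term in $x$.

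The only thing left to verify is the coefficient of $x$: the constant contribution $-(\bt-i\al)x$ combines with the $2\bt x$ coming from $\int_0^x\mathcal M$ to give $(-\bt + i\al + 2\bt)x = (\bt+i\al)x$, exactly as stated. I do not expect any genuine obstacle here — the computation is essentially bookkeeping, and the only points needing mild care are tracking the signs of the boundary constants $L_0$ and $L_1$ (which appear precisely because the antiderivatives in Corollary~\ref{Cor0} and \eqref{LogL} are normalized at $x=0$ rather than at $-\infty$) and confirming the cancellation that produces the clean coefficient $\bt+i\al$.
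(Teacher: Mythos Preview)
Your proof is correct and follows essentially the same route as the paper's: both integrate the identity of Corollary~\ref{Cor1} over $[0,x]$ and substitute the explicit antiderivatives from Corollary~\ref{Cor0} and \eqref{LogL}. The only cosmetic difference is that you multiply through by $-(\bt-i\al)$ before integrating, whereas the paper integrates first and multiplies afterward.
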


\begin{proof}
We have from Corollaries \ref{Cor1} and \ref{Cor0}, and  \eqref{LogL},
\bee
&& \int_0^x \cos \Big(\frac{\widetilde B+\widetilde Q}{\sqrt{2}}\Big)  =  x  -  \frac1{(\bt-i\al)} \int_{0}^x (\mathcal M-\mathcal N )\\
&  & \qquad=  x  -  \frac1{\bt-i\al} \Big[  2\bt x + \log(\al^2 +\bt^2  - \bt^2 \cos(2\al y_1) +\al^2 \cosh (2\bt y_2) )   \\
&  &\qquad \qquad-  \log (1+ e^{2(\bt y_2 + i\al y_1) })  -L_0 + L_1 \Big] \\
&  & \qquad =-  \frac1{\bt-i\al} \Big[  (\bt +i\al) x + \log(\al^2 +\bt^2  - \bt^2 \cos(2\al y_1) +\al^2 \cosh (2\bt y_2) )   \\
&  &\qquad \qquad -  \log (1+ e^{2(\bt y_2 + i\al y_1) })  -L_0 + L_1 \Big] , 
\eee
as desired.
\end{proof}

\begin{cor}\label{3p10}
Assume that $x_1,x_2 \in \R$ do not satisfy \eqref{BadCondFF}. Consider the function 
\bea
\mu(x; \al,\bt, x_1,x_2) & := & 2\sqrt{2}\al^2 \bt^2  \frac{\cosh(\bt y_2)\cos(\al y_1) + i \sinh(\bt y_2)\sin(\al y_1)}{\al^2 \cosh^2(\bt y_2) +\bt^2\sin^2(\al y_1)}  \nonu \\
& =& \bt \widetilde B_1 - i \al\widetilde B_2. \label{mu0}
\eea
Then we have
\be\label{infi1}
\lim_{x\to \pm \infty} \mu(x) =0,
\ee
and
\be\label{infi2}
\mu_x = (\bt -i\al)  \cos \Big(\frac{\widetilde B +\widetilde Q}{\sqrt{2}}\Big) \mu.
\ee
\end{cor}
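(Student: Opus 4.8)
The plan is to exploit a first-order differential operator in the shift variables that annihilates the soliton potential $\widetilde Q$. Define
\[
L := \bt\,\partial_{x_1} - i\al\,\partial_{x_2}.
\]
Since $y_1 = x+x_1$ and $y_2=x+x_2$, the operators $\partial_{x_1},\partial_{x_2}$ and $\partial_x$ all act as constant-coefficient derivatives in $(y_1,y_2)$, so they commute; in particular $\partial_x$ commutes with $L$. First I would record the shift-derivatives of $\widetilde B$: differentiating \eqref{tB} and collecting terms shows that $\bt\widetilde B_1 - i\al\widetilde B_2 = L\widetilde B$ equals the closed-form rational expression for $\mu$ displayed in \eqref{mu0}, which verifies that the two presentations of $\mu$ agree. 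The limit \eqref{infi1} is then immediate from this closed form: as $x\to\pm\infty$ one has $y_2\to\pm\infty$, and the numerator grows like $e^{\bt|y_2|}$ while the denominator $\al^2\cosh^2(\bt y_2)+\bt^2\sin^2(\al y_1)$ grows like $e^{2\bt|y_2|}$, so $\mu\to 0$.

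The heart of the matter is \eqref{infi2}, and here the operator $L$ does the work. Differentiating \eqref{tQ} (equivalently using \eqref{Q0}) gives
\[
\partial_{x_1}\widetilde Q = \frac{i\al}{\bt+i\al}\,Q, \qquad \partial_{x_2}\widetilde Q = \frac{\bt}{\bt+i\al}\,Q,
\]
so that $L\widetilde Q = \bt\,\partial_{x_1}\widetilde Q - i\al\,\partial_{x_2}\widetilde Q = 0$, and consequently $LQ = (L\widetilde Q)_x = 0$ because $L$ commutes with $\partial_x$. Likewise $\mu_x = (L\widetilde B)_x = L(\widetilde B_x) = LB = \bt B_1 - i\al B_2$.

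Now I would simply apply $L$ to the B\"acklund identity \eqref{zerot}, namely $\tfrac1{\sqrt2}(B-Q) = (\bt-i\al)\sin\big(\tfrac{\widetilde B+\widetilde Q}{\sqrt2}\big)$. On the left, $L$ produces $\tfrac1{\sqrt2}(LB - LQ) = \tfrac1{\sqrt2}\mu_x$, using $LQ=0$. On the right, the constant $(\bt-i\al)$ is unaffected by $L$ and the chain rule yields $(\bt-i\al)\cos\big(\tfrac{\widetilde B+\widetilde Q}{\sqrt2}\big)\cdot\tfrac1{\sqrt2}\,L(\widetilde B+\widetilde Q)$; since $L\widetilde Q=0$ and $L\widetilde B=\mu$, this equals $(\bt-i\al)\cos\big(\tfrac{\widetilde B+\widetilde Q}{\sqrt2}\big)\tfrac{\mu}{\sqrt2}$. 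Equating the two sides and cancelling $1/\sqrt2$ gives precisely \eqref{infi2}.

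The only genuinely delicate point is the annihilation identity $L\widetilde Q = 0$; everything else is bookkeeping. I would therefore present the computation of $\partial_{x_1}\widetilde Q$ and $\partial_{x_2}\widetilde Q$ in terms of $Q$ with some care, since it is exactly the observation that collapses the whole argument to a one-line differentiation of \eqref{zerot}. The alternative route---verifying \eqref{infi2} by brute-force differentiation of the rational form of $\mu$ together with an explicit closed form for $\cos\big((\widetilde B+\widetilde Q)/\sqrt2\big)$ obtained from Corollary \ref{Cor1}---is feasible but reduces to a lengthy hyperbolic/trigonometric identity, so the operator approach is decisively cleaner.
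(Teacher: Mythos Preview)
Your proposal is correct and follows essentially the same approach as the paper: the paper also differentiates \eqref{zerot} with respect to the shift parameters $x_1,x_2$ and uses the annihilation identity $\bt\widetilde Q_1 - i\al\widetilde Q_2 \equiv 0$ (stated as \eqref{Qis0}) to eliminate the $\widetilde Q$-contributions. Your packaging via the operator $L=\bt\partial_{x_1}-i\al\partial_{x_2}$ is a clean way to present exactly the same computation the paper carries out componentwise.
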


\begin{proof}

Identity \eqref{infi1} is trivial. Let us prove \eqref{infi2}.  First of all, note that (cf. \eqref{tQ12})
\be\label{Qis0}
\bt \widetilde Q_1 - i \al\widetilde Q_2 \equiv 0.
\ee
On the other hand, from  \eqref{zerot} we have
\[
(\widetilde B_1- \widetilde Q_1)_x  - (\bt-i\al) \cos \Big(\frac{\widetilde B+\widetilde Q}{\sqrt{2}}\Big)(\widetilde B_1 + \widetilde Q_1)=0.
\]
Similarly, 
\[
(\widetilde B_2- \widetilde Q_2)_x  - (\bt-i\al) \cos \Big(\frac{\widetilde B+\widetilde Q}{\sqrt{2}}\Big)(\widetilde B_2 + \widetilde Q_2) =0.
\]
We have then
\bee
\mu_x &  =&  ( \bt \widetilde B_1 - i \al\widetilde B_2)_x \\
& =& (\bt-i\al) \cos \Big(\frac{\widetilde B+\widetilde Q}{\sqrt{2}}\Big)\mu \\
& &  + (\bt \widetilde Q_1 - i \al\widetilde Q_2)_x +  (\bt-i\al) \cos \Big(\frac{\widetilde B+\widetilde Q}{\sqrt{2}}\Big)(\bt \widetilde Q_1 -i\al  \widetilde Q_2) \\
& =& (\bt-i\al) \cos \Big(\frac{\widetilde B+\widetilde Q}{\sqrt{2}}\Big)\mu.
\eee
The proof is complete.
\end{proof}

\begin{lem}\label{Nondeg}
Assume that  \eqref{BadCondFF} does not hold. Then $\mu$ defined in \eqref{mu0} has no zeroes, i.e. $|\mu(x)|>0$ for all $x\in \R$.
\end{lem}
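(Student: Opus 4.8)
The plan is to exploit the explicit rational form of $\mu$ given in \eqref{mu0}. First I would observe that the denominator $\al^2 \cosh^2(\bt y_2) + \bt^2 \sin^2(\al y_1)$ is strictly positive for every $x\in\R$: since $\al,\bt>0$ and $\cosh^2(\bt y_2)\geq 1$, it is bounded below by $\al^2>0$. As the prefactor $2\sqrt{2}\al^2\bt^2$ is also nonzero, $\mu(x)=0$ holds if and only if the numerator vanishes. Because $y_1,y_2\in\R$, the numerator $\cosh(\bt y_2)\cos(\al y_1) + i\sinh(\bt y_2)\sin(\al y_1)$ is a complex number whose real and imaginary parts are real, so $\mu(x)=0$ forces both $\cosh(\bt y_2)\cos(\al y_1)=0$ and $\sinh(\bt y_2)\sin(\al y_1)=0$ simultaneously.

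Next I would analyze this pair of equations exactly as in the proof of Lemma \ref{Good}. Since $\cosh(\bt y_2)\geq 1>0$, the first equation yields $\cos(\al y_1)=0$, whence $\sin(\al y_1)=\pm 1\neq 0$; inserting this into the second equation forces $\sinh(\bt y_2)=0$, i.e. $y_2=0$ (using $\bt\neq 0$). Therefore a zero of $\mu$ can occur only at a point where simultaneously $\cos(\al y_1)=0$ and $\sinh(\bt y_2)=0$, which is precisely the pair of conditions \eqref{shco} already encountered in Lemma \ref{Good}.

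Finally, using $y_1=x+x_1$ and $y_2=x+x_2$ from \eqref{y1y2mod}, the constraint $y_2=0$ pins the location to $x=-x_2$, and then $\cos(\al y_1)=0$ reads $\al(x_1-x_2)=\pi(k+\tfrac12)$ for some $k\in\Z$, that is, $x_1 = x_2 + \tfrac{\pi}{\al}(k+\tfrac12)$. This is exactly the excluded condition \eqref{BadCondFF}. Since by hypothesis \eqref{BadCondFF} does not hold, the numerator of $\mu$ never vanishes, and hence $|\mu(x)|>0$ for all $x\in\R$, as claimed.

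There is essentially no analytic difficulty here; the only point requiring care is recognizing that the complex numerator vanishes only when its real and imaginary parts vanish together, and that these two real constraints coincide precisely with \eqref{BadCondFF}. In fact the argument yields the sharp statement that $\mu$ possesses a zero if and only if \eqref{BadCondFF} is satisfied, mirroring the singularity analysis of $\widetilde Q$ and $Q$ carried out in Lemma \ref{Good}.
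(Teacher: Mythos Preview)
Your proof is correct and follows essentially the same approach as the paper: both arguments observe that $\mu(x)=0$ forces the real and imaginary parts of the numerator in \eqref{mu0} to vanish simultaneously, yielding $\cos(\al y_1)=0$ and $\sinh(\bt y_2)=0$, which via \eqref{shco} is precisely the excluded condition \eqref{BadCondFF}. Your version is simply more detailed, making explicit why the denominator and prefactor never vanish and spelling out the final translation back to $x_1,x_2$.
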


\begin{proof}
From \eqref{mu0} we have $\mu(x)=0$ if and only if $\cos(\bt y_1) =0$ and $\sinh(\al y_2) =0$, i.e. from \eqref{shco} we have that \eqref{BadCondFF} is satisfied.  
\end{proof}

Now we consider the opposite case, where the sign in front of \eqref{infi2} is negative. We finish this section with the following result. 

\begin{lem}\label{lemmanu2}
Assume that \eqref{BadCondFF} does not hold. Then 
\[
\mu^1(x;\al,\bt,x_1,x_2) := \frac 1\mu (x;\al,\bt,x_1,x_2),
\]
with $\mu$ defined in \eqref{mu0}, is well-defined, it has no zeroes and satisfies
\[
\lim_{x\to \pm \infty} |\mu^1(x)| = +\infty,
\]
and
\[
\mu^1_x = -(\bt -i\al)  \cos \Big(\frac{\widetilde B +\widetilde Q}{\sqrt{2}}\Big) \mu^1.
\] 
\end{lem}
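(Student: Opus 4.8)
The plan is to obtain every assertion of Lemma \ref{lemmanu2} as an immediate consequence of the two results that precede it: Corollary \ref{3p10}, which records the asymptotics \eqref{infi1} and the first-order ODE \eqref{infi2} for $\mu$, and Lemma \ref{Nondeg}, which guarantees that $\mu$ never vanishes precisely when \eqref{BadCondFF} fails. Since we assume throughout that \eqref{BadCondFF} does not hold, both of these are available, and nothing beyond elementary calculus is needed. There is, in fact, no serious obstacle here; the content of the lemma is simply that inverting a nowhere-zero function exchanges decay for blow-up and flips the sign of the logarithmic derivative.

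First I would establish that $\mu^1 = 1/\mu$ is well-defined and has no zeroes. By Lemma \ref{Nondeg}, under the standing hypothesis we have $|\mu(x)| > 0$ for every $x \in \R$, so the reciprocal $\mu^1(x) = 1/\mu(x)$ is defined at each point and is itself never zero. For the boundary behavior, I would combine this non-vanishing with the decay \eqref{infi1}: since $\lim_{x\to \pm\infty} \mu(x) = 0$ while $\mu$ stays nonzero, we get
\[
\lim_{x\to \pm\infty} |\mu^1(x)| = \lim_{x\to \pm\infty} \frac{1}{|\mu(x)|} = +\infty,
\]
which is the claimed asymptotic.

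Finally I would verify the differential equation by differentiating the reciprocal and substituting \eqref{infi2}. Writing $\mu^1 = \mu^{-1}$ and using the chain rule gives $\mu^1_x = -\mu^{-2}\mu_x$; plugging in $\mu_x = (\bt - i\al)\cos\big(\tfrac{\widetilde B + \widetilde Q}{\sqrt{2}}\big)\mu$ from \eqref{infi2} yields
\[
\mu^1_x = -\frac{1}{\mu^2}\,(\bt - i\al)\cos\Big(\frac{\widetilde B + \widetilde Q}{\sqrt{2}}\Big)\mu = -(\bt - i\al)\cos\Big(\frac{\widetilde B + \widetilde Q}{\sqrt{2}}\Big)\mu^1,
\]
which is exactly the stated ODE with the sign in front reversed relative to \eqref{infi2}. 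The only point meriting a word of care is that each of the three steps rests on the hypothesis that \eqref{BadCondFF} is not satisfied: this is what makes $\mu$ smooth and nonvanishing, and hence what licenses both the division and the differentiation above. I expect no technical difficulty, so the proof can be stated in a couple of lines once these references are invoked.
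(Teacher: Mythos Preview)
Your proposal is correct and follows exactly the paper's approach: the paper's proof reads in its entirety ``A direct consequence of Corollary \ref{3p10} and Lemma \ref{Nondeg},'' and you have simply spelled out the elementary reciprocal computation that this sentence leaves implicit.
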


\begin{proof}
A direct consequence of Corollary \ref{3p10} and Lemma \ref{Nondeg}.
 \end{proof}

\bigskip

\section{Double B\"acklund transformation for mKdV}\label{6}

\medskip

Assume that $x_1$ and $x_2$ do not satisfy \eqref{BadCondFF}. Consider the breather and soliton profiles $B$ and $Q$ defined in \eqref{BBB} and \eqref{Q}, well-defined according to Lemma \ref{Good}.  From Lemma \ref{Equal}, we have the following result.

\begin{lem}\label{BQ0}
We have, for all $x\in \R$,
\[
G(  B, Q,\widetilde B_t, \widetilde Q_t, \bt-i\al) =(0,0).
\]
\end{lem}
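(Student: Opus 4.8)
The plan is to observe that this lemma is nothing more than a direct rewriting of the two identities \eqref{zerot} and \eqref{zerot2} of Lemma \ref{Equal} in the language of the functional $G=(G_1,G_2)$ defined in \eqref{G1}--\eqref{G2}. So the entire argument reduces to a substitution of the specified arguments into the definition of $G$, followed by pattern-matching against the conclusions of Lemma \ref{Equal}.

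Concretely, I would evaluate $G$ at $(u_a,u_b,v_a,v_b,m)=(B,Q,\widetilde B_t,\widetilde Q_t,\bt-i\al)$. For the first component, \eqref{G1} gives
\[
G_1(B,Q,\widetilde B_t,\widetilde Q_t,\bt-i\al)=\frac{B-Q}{\sqrt{2}}-(\bt-i\al)\sin\Big(\frac{\widetilde B+\widetilde Q}{\sqrt{2}}\Big),
\]
which is precisely the left-hand side of \eqref{zerot}, hence identically zero. For the second component, \eqref{G2} gives
\[
G_2(B,Q,\widetilde B_t,\widetilde Q_t,\bt-i\al)=\widetilde B_t-\widetilde Q_t+(\bt-i\al)\Big[(B_x+Q_x)\cos\Big(\frac{\widetilde B+\widetilde Q}{\sqrt{2}}\Big)+\frac{B^2+Q^2}{\sqrt{2}}\sin\Big(\frac{\widetilde B+\widetilde Q}{\sqrt{2}}\Big)\Big],
\]
which is exactly the left-hand side of \eqref{zerot2}, hence also identically zero. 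Combining the two, $G(B,Q,\widetilde B_t,\widetilde Q_t,\bt-i\al)\equiv(0,0)$ for all $x\in\R$.

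The only hypothesis needed is that $x_1,x_2$ do not satisfy \eqref{BadCondFF}, which is assumed here and which is exactly what Lemma \ref{Equal} requires so that $B$, $Q$, $\widetilde B$, $\widetilde Q$ and the relevant trigonometric expressions are well-defined pointwise. There is essentially no obstacle in this lemma itself: all the analytic content has already been absorbed into Lemma \ref{Equal} (whose second identity \eqref{zerot2} is derived from the first, with the verification of \eqref{zerot} itself deferred to Appendix \ref{AppA}). The role of this lemma is purely organizational, repackaging those identities so that the machinery of Section \ref{3}—in particular the inversion results of Propositions \ref{IFT} and \ref{IFT2}—can later be applied around the breather-soliton pair $(B,Q)$ with the specific parameter choice $m=\bt-i\al$.
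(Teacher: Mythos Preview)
Your proposal is correct and takes essentially the same approach as the paper: the paper simply states that the lemma follows from Lemma \ref{Equal} without writing out any details, and your substitution of $(B,Q,\widetilde B_t,\widetilde Q_t,\bt-i\al)$ into \eqref{G1}--\eqref{G2} and identification with \eqref{zerot}--\eqref{zerot2} is exactly the intended (and only) argument.
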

Note that the previous identity can be extended by zero to the case where $x_1$ and $x_2$ satisfy \eqref{BadCondFF}, in such a form that now $G(B,Q,\widetilde B_t, \widetilde Q_t, \bt-i\al)$ as a function of $x_1$ and $x_2$ is well-defined and continuous everywhere in $\R^2$ (and identically zero).

\medskip

Define (cf. \eqref{tB}-\eqref{tBt}),
\be\label{B00}
\begin{cases}
\widetilde B^0(x;\al,\bt)  := \widetilde B(x;\al,\bt, 0,0), \\
\widetilde B^0_t (x;\al,\bt) :=  \delta \widetilde B_1(x;\al,\bt, 0,0) +\ga \widetilde B_2(x;\al,\bt, 0,0)  ,\\
B^0 (x;\al,\bt) : = \partial_x \widetilde B(x;\al,\bt, 0,0).
\end{cases}
\ee
Finally, for $z_a^0 \in H^1(\R)$ we define
\be\label{theta0}
\omega_a^0:= -((z_a^0)_{xx} + (z_a^0)^3) \in H^{-1}(\R).
\ee

\medskip

In what follows we will use Lemma \ref{BQ0} and apply Propositions \ref{IFT} and \ref{IFT2} in a neighborhood of the complex soliton and the breather at time zero.  Recall that from Lemma \ref{Good} the complex soliton $Q^0$ is everywhere well-defined since  \eqref{BadCondFF} is not satisfied.

\begin{lem}\label{L1}
There exists $\eta_0>0$ and a constant $C>0$ such that, for all $0<\eta<\eta_0$, the following is satisfied. Assume that $z_a^0\in H^1(\R)$ satisfies
\[
 \|z_a^0\|_{H^1(\R)} <\eta, \quad \omega_a^0 \; \hbox{defined by} \; \eqref{theta0}.
\]
Then there exist unique $z_b^0 \in H^1(\R,\Com)$, $\omega_b^0\in H^{-1}(\R;\Com)$ and $m_1 \in \Com$, of the form 
\[
z_b^0(x)=  z_b^0[z_a^0](x),\quad \omega_b^0(x)=\omega_b^0[z_a^0,\omega_a^0](x) , \quad m_1 =m_1 [z_a^0]:=  \bt- i\al + p^0,
\]
such that 
\[
\|z_b^0\|_{H^1(\R;\Com)} +|p^0| \leq C\eta,
\]

\[
\widetilde z_a+\widetilde z_b \in H^2(\R;\Com),
\]
and
\[
G( B^0 +z_a^0,  Q^0 + z_b^0 ,  \widetilde B^0_t + \omega_a^0 , \widetilde Q^0_t +\omega_b^0,m_1) \equiv (0,0).
\]
\end{lem}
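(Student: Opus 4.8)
The plan is to apply Proposition \ref{IFT} directly, with the base point $X^0$ taken to be the exact breather-soliton pair from Lemma \ref{BQ0}. Concretely, I would set
\[
u_a^0 := B^0, \quad u_b^0 := Q^0, \quad v_a^0 := \widetilde B^0_t, \quad v_b^0 := \widetilde Q^0_t, \quad m^0 := \bt - i\al,
\]
and then verify the hypotheses of Proposition \ref{IFT} one at a time. The sign condition \eqref{rem0}, namely $\re m^0 = \bt > 0$, is immediate. The identity $G(X^0) = (0,0)$ required by \eqref{Conds11} is exactly the content of Lemma \ref{BQ0}. Conditions \eqref{Conds12} and \eqref{Conds13} on the combined phase $\widetilde u_a^0 + \widetilde u_b^0 = \widetilde B^0 + \widetilde Q^0$ follow from the boundary behavior \eqref{tBinfty} of the breather kink and \eqref{tQinfty} of the soliton kink: indeed $\widetilde B^0(\pm\infty) = 0$ while $\widetilde Q^0(-\infty) = 0$ and $\widetilde Q^0(+\infty) = \sqrt{2}\pi$, so the sum converges to $0$ and $\sqrt{2}\pi$ respectively, and one checks the $H^1$ requirement on $\sin$ of the phase directly.

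The heart of the verification is the auxiliary ODE \eqref{Mu0} and its associated nondegeneracy conditions \eqref{Cond110}--\eqref{Cond111}. Here I would invoke the explicit function $\mu$ constructed in Corollary \ref{3p10}: equation \eqref{infi2} states precisely that $\mu_x = (\bt - i\al)\cos\big(\tfrac{\widetilde B + \widetilde Q}{\sqrt{2}}\big)\mu$, which is exactly \eqref{Mu0} with $m^0 = \bt - i\al$ and the phase $\widetilde u_a^0 + \widetilde u_b^0 = \widetilde B^0 + \widetilde Q^0$. Thus we may take $\mu^0 := \mu$. The nonvanishing $|\mu^0(x)| > 0$ is Lemma \ref{Nondeg}, the decay $\mu^0 \in H^1(\R;\Com)$ follows from \eqref{infi1} together with the exponential character of $\mu$ visible in \eqref{mu0}, and the bound $|\mu^0_x/\mu^0| \leq C$ follows from \eqref{infi2} since $\cos$ of the phase is bounded. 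The remaining integral condition \eqref{Cond111}, that $\int_\R \sin\big(\tfrac{\widetilde B^0 + \widetilde Q^0}{\sqrt{2}}\big)\mu^0 \neq 0$, is the one genuinely arithmetic point; I expect to evaluate it using Corollary \ref{Cor1} to rewrite $\sin$ of the phase as a derivative of $\cos$, or to use the explicit formulas for $B^0$, $Q^0$ and $\mu$, reducing the integral to an explicit nonzero constant much as in the computation $\int_\R \sin(\widetilde Q^0/\sqrt 2)Q^0 = 2\sqrt 2$ carried out in the proof of Lemma \ref{L2}.

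Once all hypotheses of Proposition \ref{IFT} are in place, the conclusion is essentially immediate: for $u_a = B^0 + z_a^0$ with $\|z_a^0\|_{H^1(\R)} < \eta$ small (playing the role of the perturbation $u_a$ in \eqref{ceroCond}), the proposition produces a unique $(u_b, v_b, m)$ in an open neighborhood satisfying \eqref{cero}. Relabeling $u_b = Q^0 + z_b^0$, $m = \bt - i\al + p^0$, and setting $\omega_b^0$ via \eqref{G2} so that $v_b = \widetilde Q^0_t + \omega_b^0$, the estimates \eqref{uno}--\eqref{dos} give exactly $\|z_b^0\|_{H^1(\R;\Com)} + |p^0| \leq C\eta$ and the statement $\widetilde z_a + \widetilde z_b \in H^2(\R;\Com)$ (this is \eqref{uno}, since the base phases cancel). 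The identity $G(B^0 + z_a^0, Q^0 + z_b^0, \widetilde B^0_t + \omega_a^0, \widetilde Q^0_t + \omega_b^0, m_1) \equiv (0,0)$ is then precisely \eqref{cero}. The functional dependence $z_b^0 = z_b^0[z_a^0]$, $\omega_b^0 = \omega_b^0[z_a^0, \omega_a^0]$ and $m_1 = m_1[z_a^0]$ is inherited from the uniqueness clause of the Implicit Function Theorem used inside Proposition \ref{IFT}.

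The main obstacle I anticipate is the verification of the nondegeneracy integral \eqref{Cond111}, since this is the only step that cannot be outsourced wholesale to an earlier lemma and requires either a direct (somewhat delicate) computation with the explicit profiles or a clever application of Corollary \ref{Cor1}. A secondary technical point is confirming that $\mu^0 = \mu$ genuinely lies in $H^1(\R;\Com)$ rather than merely decaying pointwise; one must check the decay is exponential and uniform, which the closed form \eqref{mu0} makes plausible but which should be stated carefully, since the constant $C$ in the final estimate degenerates precisely as \eqref{BadCondFF} is approached — consistent with the blow-up of $\|\mu^0\|_{H^1}$ noted in the proof of Proposition \ref{IFT}.
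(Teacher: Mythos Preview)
Your proposal is correct and follows essentially the same route as the paper: apply Proposition~\ref{IFT} at the base point $X^0=(B^0,Q^0,\widetilde B^0_t,\widetilde Q^0_t,\bt-i\al)$ from Lemma~\ref{BQ0}, take $\mu^0$ to be the function $\mu$ of Corollary~\ref{3p10}, and identify the nondegeneracy integral \eqref{Cond111} as the one substantive step. For that integral the paper's computation differs slightly from what you anticipate: rather than using Corollary~\ref{Cor1} directly, it exploits the identity $\bt\widetilde Q_1-i\al\widetilde Q_2\equiv 0$ from \eqref{Qis0} to rewrite $\mu^0\sin\big(\tfrac{\widetilde B^0+\widetilde Q^0}{\sqrt2}\big)$ as a combination of $\partial_{x_1}$ and $\partial_{x_2}$ of $\cos\big(\tfrac{\widetilde B+\widetilde Q}{\sqrt2}\big)$, interchanges the shift-derivatives with the $x$-integral, and then evaluates via Corollary~\ref{CosT}, obtaining the explicit nonzero value $\tfrac{4i\al\bt}{\bt-i\al}$.
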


\begin{proof}
Let $Q^0$ and $B^0$ be the soliton and breather profiles defined in \eqref{Q00} and \eqref{B00}.  We will apply Proposition \ref{IFT} with
\[
u_a^0:= B^0, \quad u_b^0:= Q^0,  \quad v_a^0:= \widetilde B_t^0,\quad v_b^0:= \widetilde Q_t^0, \quad m^0:= \bt+i\al. 
\]
Clearly $\re m^0 =\bt>0$, so that \eqref{rem0} is satisfied. On the other hand, \eqref{Conds11} is a consequence of Lemma \ref{BQ0}. From \eqref{zerot} condition \eqref{Conds12} reads
\[
\sin \Big( \frac{\widetilde B^0 + \widetilde Q^0}{\sqrt{2}}\Big) = \frac{(B^0-Q^0)}{\sqrt{2}(\bt -i\al)} \in H^1(\R;\Com).
\]
Condition \eqref{Conds13} is clearly satisfied (see \eqref{tQinfty} and \eqref{tBinfty}). From Corollary \ref{3p10} we have 
\[
\mu^0 = \bt (\widetilde B_1)^0 -i\al (\widetilde B_2)^0.
\]
Note that from Lemmas \ref{Good} and \ref{Nondeg}  $\mu^0$ has no zeroes in the complex plane and it is exponentially decreasing in space.  Finally, let us show that 
\[
\int_\R \mu^0 \sin \Big( \frac{\widetilde B^0 + \widetilde Q^0}{\sqrt{2}} \Big) = \frac{4i\al\bt}{\bt -i\al}.
\]
First of all, we have from \eqref{Qis0}
\bee
 & &  [\bt(\widetilde B_1)^0 -i\al (\widetilde B_2)^0 ] \sin \Big(\frac{\widetilde B^0+\widetilde Q^0}{\sqrt{2}}\Big) \\
 & & \qquad =  [ \bt(\widetilde B_1 + \widetilde Q_1)^0 -i\al (\widetilde B_2 + \widetilde Q_2)^0  ]  \sin \Big(\frac{\widetilde B^0+\widetilde Q^0}{\sqrt{2}}\Big) \\
 & &   \qquad \qquad   +  [  -\bt (\widetilde Q_1)^0 + i\al (\widetilde Q_2)^0 ]  \sin \Big(\frac{\widetilde B^0+\widetilde Q^0}{\sqrt{2}}\Big).
\eee 
Consequently, 
\bee
&&   [\bt(\widetilde B_1)^0 -i\al (\widetilde B_2)^0 ] \sin \Big(\frac{\widetilde B^0+\widetilde Q^0}{\sqrt{2}}\Big) \\
 & & \qquad  =  - \sqrt{2}  \bt \partial_{x_1}\Big[   \cos \Big(\frac{\widetilde B+\widetilde Q}{\sqrt{2}}\Big) \Big]\Big|^0  + i\al  \sqrt{2} \partial_{x_2}\Big[   \cos \Big(\frac{\widetilde B+\widetilde Q}{\sqrt{2}}\Big) \Big] \Big|^0.
\eee 
Therefore, if $R_1,R_2>0$ are independent of $x_1$ and $x_2$,
\bee
& &\int_{-R_2}^{R_1} \mu^0 \sin \Big( \frac{\widetilde B^0 + \widetilde Q^0}{\sqrt{2}} \Big) \\
& & \qquad = \sqrt{2}\int_{-R_2}^{R_1}   \Big\{ -   \bt \partial_{x_1}\Big[   \cos \Big(\frac{\widetilde B+\widetilde Q}{\sqrt{2}}\Big) \Big]\Big|^0  + i\al   \partial_{x_2}\Big[   \cos \Big(\frac{\widetilde B+\widetilde Q}{\sqrt{2}}\Big) \Big] \Big|^0  \Big\}\\
& & \qquad =   \sqrt{2}  \Big\{ -   \bt \partial_{x_1} \int_{-R_2}^{R_1}   \cos \Big(\frac{\widetilde B+\widetilde Q}{\sqrt{2}}\Big) + i\al   \partial_{x_2} \int_{-R_2}^{R_1}  \cos \Big(\frac{\widetilde B+\widetilde Q}{\sqrt{2}}\Big)   \Big\} \Big|^0 .
\eee
Now we use Corollary \ref{CosT}: we have
\bee
& & \partial_{x_1} \int_{-R_2}^{R_1}   \cos \Big(\frac{\widetilde B+\widetilde Q}{\sqrt{2}}\Big) \\
& & \quad = -\frac1{\bt-i\al} \Big[ \frac{2i\al e^{2\bt y_2 + 2i\al y_1}}{1+e^{2\bt y_2 + 2 i \al y_1}} - \frac{2\al\bt^2 \sin (2\al y_1)}{\al^2 + \bt^2 -\bt^2 \cos(2\al y_1) +\al^2 \cosh(2\bt y_2)} \Big]\Big|_{-R_1}^{R_2}.
\eee
We have that
\[
\lim_{R_1,R_2 \to \infty}\partial_{x_1} \int_{-R_2}^{R_1}   \cos \Big(\frac{\widetilde B+\widetilde Q}{\sqrt{2}}\Big) = -\frac{2i\al}{\bt -i\al}.
\]
Similarly,
\bee
& & \partial_{x_2} \int_{-R_2}^{R_1}   \cos \Big(\frac{\widetilde B+\widetilde Q}{\sqrt{2}}\Big) = \\
& & \quad = -\frac1{\bt-i\al} \Big[ \frac{2\bt e^{2\bt y_2 + 2i\al y_1}}{1+e^{2\bt y_2 + 2 i \al y_1}} - \frac{2\al^2 \bt \sinh (2\bt y_2)}{\al^2 + \bt^2 -\bt^2 \cos(2\al y_1) +\al^2 \cosh(2\bt y_2)} \Big]\Big|_{-R_1}^{R_2},
\eee
and
\[
\lim_{R_1,R_2 \to \infty}\partial_{x_2} \int_{-R_2}^{R_1}   \cos \Big(\frac{\widetilde B+\widetilde Q}{\sqrt{2}}\Big) = - \frac{2\bt -4\bt}{\bt -i\al} = \frac{2\bt}{\bt -i\al}.
\]
Adding the previous identities, we finally obtain
\[
\int_\R \mu^0 \sin \Big( \frac{\widetilde B^0 + \widetilde Q^0}{\sqrt{2}} \Big) = \sqrt{2} \Big[ \frac{2i\al\bt}{\bt -i\al}  +  \frac{2i\al \bt}{\bt -i\al}\Big] = \frac{4i\al\bt}{\bt -i\al} \neq 0.
\]
After applying Proposition \ref{IFT}, we conclude.
\end{proof}

Now we address the following very important question: is $y_a^0$ given in Lemma \ref{L2} real-valued for all $x \in \R$? 
In general, it seems that the answer is negative; however, if $z_a^0$ in Lemma \ref{L1} is real-valued, and $z_b^0$ from Lemma \ref{L1} satisfies \eqref{smallNU}, then the corresponding function $y_a^0$ given in Lemma \ref{L2} is also real-valued. This property is a consequence of a deep result called \emph{permutability theorem}, that we explain below.

\medskip

First of all, from Lemma \ref{L1} we have
\be\label{111}
\frac 1{\sqrt{2}} (B^0+z_a^0 -Q^0 -z_b^0) =(\bt-i\al + p^0) \sin\Big( \frac{\widetilde B^0+\widetilde z_a^0 +\widetilde Q^0 +\widetilde{z}_b^0}{\sqrt{2}}\Big), 
\ee
for some small $p^0 \in \Com$, and
\be\label{KK}
\sin\Big( \frac{\widetilde B^0+\widetilde z_a^0 +\widetilde Q^0 +\widetilde{z}_b^0}{\sqrt{2}}\Big) \in H^1(\R;\Com).
\ee
Now, by taking $\eta_0$ smaller if necessary, such that $C\eta< \nu_0$ for all $0<\eta<\eta_0$, Lemma \ref{L2} also applies. We get
\be\label{112}
\frac 1{\sqrt{2}} (Q^0+z_b^0-y_a^0) =(\bt + i\al + q^0) \sin\Big( \frac{\widetilde Q^0+\widetilde{z}_b^0 +\widetilde y_a^0}{\sqrt{2}}\Big),
\ee
for some small $q^0$.

\smallskip

We need some auxiliary notation. Define
\[
\bt_* := \bt + \re p^0, \quad \al_* := \al - \ima p^0,
\]
such that 
\[
\bt  -i\al +p^0 = \bt_* -i\al_*. \qquad \hbox{(Compare with \eqref{ABStar}.)}
\]
We also consider
\[
\widetilde Q^0_* := \widetilde Q(\cdot \ ; -\al_*,\bt_*,0,0), \qquad  Q^0_* :=  Q(\cdot \ ; -\al_*,\bt_*,0,0).
\]
Note that since $p^0$ is small, we have that $Q^0_*$ and $\overline{Q}^0$ share the same properties, since
\be\label{sal}
\|Q^0_* - \overline{Q}^0\|_{H^1(\R;\Com)} \leq C\eta.
\ee
Moreover, thanks to Lemma \ref{Equal0} applied to $Q^0_*$, 
 \[
 \frac 1{\sqrt{2}} Q^0_* =(\bt - i\al +p^0) \sin\Big( \frac{\widetilde Q^0_*}{\sqrt{2}}\Big).
 \]
Consequently, applying Proposition \ref{IFT2} starting at $y_a^0$, and using \eqref{sal} we can define $z_d^0$ via the identity
\be\label{221}
\frac 1{\sqrt{2}} (\overline{Q}^0 +z_d^0 -y_a^0) =(\bt - i\al + p^0) \sin\Big( \frac{\widetilde{\overline{Q}}^0 +\widetilde z_d^0 +\widetilde y_a^0}{\sqrt{2}}\Big). 
\ee

\medskip

Similarly, using \eqref{ABStar} and \eqref{B00} we define
\be\label{B0Star}
(\widetilde B^0)^*:= \widetilde B^0 (\cdot \ ; \al^*,\bt^*), \qquad (B^0)^* := B(\cdot \ ; \al^*,\bt^*),
\ee
so that from Lemma \eqref{Equal} we have
\[
\frac 1{\sqrt{2}} ((B^0)^*  - (Q^0)^*  ) =(\bt^* - i\al^* ) \sin\Big( \frac{(\widetilde{B^0})^*  +(\widetilde{Q^0})^*}{\sqrt{2}}\Big),
\]
and applying Corollary  \ref{Cor0a} we get
\[
\frac 1{\sqrt{2}} ((B^0)^*  - \overline{(Q^0)^* } ) =(\bt + i\al +q^0) \sin\Big( \frac{(\widetilde{B^0})^*  + \overline{ (\widetilde{Q^0})^* }}{\sqrt{2}}\Big).
\]
Using that 
\[
\|(B^0)^* - B^0\|_{H^1(\R)} \leq C\eta, \quad  \|\overline{(Q^0)^* } -  \overline{Q^0}\|_{H^1(\R;\Com)} \leq C\eta,
\]
we can use Proposition \ref{IFT2} to obtain
\be\label{222}
\frac 1{\sqrt{2}} (B^0+z_c^0 - \overline{Q}^0 - z_d^0 ) =(\bt+ i\al + q^0) \sin\Big( \frac{\widetilde B^0 +\widetilde z_c^0 +\widetilde{\overline{Q}^0}+\widetilde z_d^0}{\sqrt{2}}\Big),
\ee
for some $z_c^0$ small. Note that the coefficients $(\bt - i\al + p^0)$ and $(\bt + i\al + q^0)$ were left fixed this time.\footnote{Note that the order of the coefficients will be important.} Note additionally that  $z_d^0$ and $z_c^0$ are bounded functions. Now we can announce a permutability theorem \cite[p. 246]{La}. This is part of a more general result, standard in the mathematical physics literature, see \cite{Wah} for a \emph{formal} proof in the Korteweg-de Vries (KdV) case.

\begin{thm}[Permutability theorem]
We have
\be\label{z0z1}
\widetilde{z_c}^0\equiv \widetilde{z_a}^0.
\ee
In particular, $z_c^0$ is an $H^1$ real-valued function.
\end{thm}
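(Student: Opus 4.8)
The plan is to read the four relations \eqref{111}, \eqref{112}, \eqref{221} and \eqref{222} as the four edges of a single Bianchi permutability square based at the real-valued solution $y_a^0$, and to deduce equality of the two upper vertices from the classical algebraic superposition formula together with the uniqueness built into Proposition \ref{IFT2}. Concretely, reading \eqref{112} backwards takes $y_a^0$ to $Q^0+z_b^0$ with parameter $m_2:=\beta+i\alpha+q^0$, and reading \eqref{221} backwards takes $y_a^0$ to $\overline{Q}^0+z_d^0$ with parameter $m_1:=\beta-i\alpha+p^0$; then \eqref{111} takes $Q^0+z_b^0$ to $B^0+z_a^0$ with $m_1$, while \eqref{222} takes $\overline{Q}^0+z_d^0$ to $B^0+z_c^0$ with $m_2$. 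Hence $B^0+z_a^0$ and $B^0+z_c^0$ are obtained from the same base point by applying the two B\"acklund transformations of parameters $m_1$ and $m_2$ in the two possible orders, and proving $\widetilde z_a^0\equiv\widetilde z_c^0$ is exactly the assertion that the order is irrelevant.

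First I would pass to the potential (kink) variables, keeping only the first component $G_1$ from \eqref{G1}, which turns each edge into a sine-Gordon-type first order relation $(\widetilde u_a-\widetilde u_b)_x/\sqrt2=m\sin\big((\widetilde u_a+\widetilde u_b)/\sqrt2\big)$. Setting $\theta_0:=\widetilde y_a^0/\sqrt2$, $\theta_1:=(\widetilde Q^0+\widetilde z_b^0)/\sqrt2$, $\theta_2:=(\widetilde{\overline{Q}}^0+\widetilde z_d^0)/\sqrt2$ and letting $\theta_3$ denote a candidate upper vertex, the two base edges read $(\theta_1-\theta_0)_x=m_2\sin(\theta_1+\theta_0)$ and $(\theta_2-\theta_0)_x=m_1\sin(\theta_2+\theta_0)$. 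I would then \emph{define} $\theta_3$ algebraically, by a superposition formula of the form
\[
\tan\Big(\frac{\theta_3-\theta_0}{2}\Big)=\frac{m_1+m_2}{m_1-m_2}\,\tan\Big(\frac{\theta_1-\theta_2}{2}\Big),
\]
obtained from the sum-to-product identity $\sin A-\sin B=2\cos\tfrac{A+B}2\sin\tfrac{A-B}2$ after eliminating the $x$-derivatives. The right-hand side is symmetric under exchanging the two intermediate vertices, so it singles out one value of $\theta_3$ out of $(\theta_0,\theta_1,\theta_2,m_1,m_2)$, with no reference to the order of the transformations.

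The core step is to verify that this algebraically defined $\theta_3$ satisfies \emph{both} remaining edges $(\theta_3-\theta_1)_x=m_1\sin(\theta_3+\theta_1)$ and $(\theta_3-\theta_2)_x=m_2\sin(\theta_3+\theta_2)$: differentiating the superposition formula and substituting the base-edge ODEs for $(\theta_1)_x$, $(\theta_2)_x$ and $(\theta_0)_x$ should reproduce exactly the two top-edge relations after trigonometric simplification. Once this is established, $\widetilde B^0+\widetilde z_a^0$ solves the $m_1$-edge based at $Q^0+z_b^0$ (by \eqref{111}) with the prescribed limits $0$ and $\sqrt2\pi$ at $\mp\infty$, so the uniqueness part of Proposition \ref{IFT2} (with $m=m_1$ fixed, $u_b=Q^0+z_b^0$ known) forces $\widetilde B^0+\widetilde z_a^0=\sqrt2\,\theta_3$; likewise $\widetilde B^0+\widetilde z_c^0$ is the unique $m_2$-edge based at $\overline{Q}^0+z_d^0$ (by \eqref{222}) and equals $\sqrt2\,\theta_3$ as well. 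Therefore $\widetilde z_c^0\equiv\widetilde z_a^0$; since $z_a^0$ is real-valued by hypothesis in Lemma \ref{L1}, so is $z_c^0$.

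The hard part will be the rigorous bookkeeping rather than the trigonometry. Two issues demand care. First, in \eqref{221} the exact soliton attached to $m_1$ is $Q^0_*$ rather than $\overline{Q}^0$, so one must invoke the closeness estimate \eqref{sal} and the well-posedness of the construction to certify that all four edges are genuinely anchored at a single base point with mutually consistent parameters. Second, the division by the common factor $\cos\tfrac{\theta_0+\theta_1+\theta_2+\theta_3}2$ and the choice of branch in $\arctan$ must be controlled uniformly, together with the orthogonality normalization \eqref{ortho} attached to the uniqueness in Proposition \ref{IFT2}: this is precisely where the smallness of $z_a^0,z_b^0,z_d^0,p^0,q^0$ and the fixed boundary values at $\pm\infty$ enter, guaranteeing that the candidate $\theta_3$ stays in the regime where the Implicit Function Theorem uniqueness applies and no spurious additive constant is introduced.
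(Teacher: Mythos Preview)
Your proposal is correct and follows essentially the same route as the paper: both arguments rest on the classical Bianchi superposition formula for the sine-Gordon-type B\"acklund relation \eqref{G1}, verified by direct trigonometric computation, together with the local uniqueness furnished by the Implicit Function Theorem. The only difference is organizational. You define the \emph{top} vertex $\theta_3$ algebraically from $(\theta_0,\theta_1,\theta_2)$ and then check that $\theta_3$ satisfies both upper edges, invoking uniqueness at the level of $u_{12}$ and $u_{21}$. The paper instead starts from the known top vertex $\widetilde u_{12}$, defines an auxiliary \emph{intermediate} vertex $\widetilde u_3$ via the inverse superposition relation \eqref{permu2}, and checks that $\widetilde u_3$ satisfies the base edge \eqref{ecuu3}, invoking uniqueness at the level of $\widetilde u_2$; a symmetric argument with $\widetilde u_4$ then closes the square. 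The paper's ordering has a mild practical advantage: the objects $\widetilde u_2$ and $\widetilde u_1$ for which uniqueness is invoked were themselves constructed by Proposition~\ref{IFT2}, so they already carry the normalization \eqref{ortho}, whereas in your ordering $B^0+z_a^0$ is raw data and you must argue separately (as you correctly flag) that the growing homogeneous mode $1/\mu^1$ forces any two bounded solutions of the first-order edge ODE to coincide. Both arguments also need the explicit check at the unperturbed point (which the paper carries out, obtaining $\widetilde u_3=\overline{\widetilde Q^0}$) to guarantee the algebraic $\arctan$ formula lands in the correct neighborhood and no branch jump occurs. Finally, watch the sign in your superposition identity: with your labeling $\theta_0\to\theta_1$ via $m_2$ and $\theta_0\to\theta_2$ via $m_1$, the formula should read $\tan\tfrac{\theta_3-\theta_0}{2}=-\tfrac{m_1+m_2}{m_1-m_2}\tan\tfrac{\theta_1-\theta_2}{2}$, matching the paper's \eqref{permu4}.
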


\begin{proof}
Define
\be\label{u012}
u_0:=y_a^0; \quad  u_1:= Q^0+z_b^0; \quad u_2:=  \overline Q^0 +z_d^0;
\ee

\be\label{u1221}
u_{12}:=  B^0 +z_a^0; \quad u_{21}:= B^0 +z_c^0;
\ee
and
\be\label{k1k2}
\kappa_1 := \bt + i\al + q^0, \quad \kappa_2 :=\bt-i\al + p^0.
\ee
Since $p^0$ and $q^0$ are small quantities, we have $\kappa_1\neq \kappa_2$, and both are nonzero complex numbers. Equations \eqref{111}-\eqref{222} read now
\be\label{41}
\frac { (u_1 -u_0)}{\sqrt{2}} = \kappa_1 \sin\Big( \frac{\widetilde u_1 + \widetilde u_0}{\sqrt{2}}\Big) ,
\ee
\be\label{42}
\frac {(u_{12} -u_1)}{\sqrt{2}}  = \kappa_2 \sin\Big( \frac{\widetilde u_{12} + \widetilde u_1}{\sqrt{2}}\Big) ,
\ee
\[
\frac { (u_{2} -u_0)}{\sqrt{2}}  = \kappa_2 \sin\Big( \frac{\widetilde u_2 + \widetilde u_0}{\sqrt{2}}\Big) ,
\]
and
\[
\frac { (u_{21} -u_2)}{\sqrt{2}} = \kappa_1 \sin\Big( \frac{\widetilde u_{21} + \widetilde u_2}{\sqrt{2}}\Big) .
\]
Note that $u_1$ and $u_2$ are obtained via the Implicit Function Theorem and therefore there is an associated uniqueness property for solutions obtained in a small neighborhood of the breather. The idea is to prove that $\widetilde u_{21} \equiv \widetilde u_{12}.$ Define $\widetilde u_3$ via the identity
\be\label{permu2}
\frac{\widetilde u_3 - \widetilde u_1}{2\sqrt{2}} = - \arctan\Big[ \Big(\frac{\kappa_1-\kappa_2}{\kappa_1+\kappa_2}\Big) \tan\Big(\frac{\widetilde u_{12} -\widetilde u_0}{2\sqrt{2}}\Big) \Big].
\ee
Whenever $u_1 = Q^0$, $u_{12} =B^0$, $u_0=0$, $\kappa_1 = \bt + i\al $ and $\kappa_2 =\bt - i\al $, we get from \eqref{breather},
\bee
\frac{\tilde u_3 -\tilde Q^0}{2\sqrt{2}} & =&   -\arctan \Big[ i\frac\al\bt \tan \Big(\frac{\tilde B^0}{2\sqrt{2}} \Big)\Big] = -\arctan \Big( i \frac{\sin(\al x)}{\cosh(\bt x)} \Big) \\
& =& -\arctan \Big(\frac{e^{i\al x} -e^{-i\al x} }{ e^{\bt x} + e^{-\bt x}} \Big).
\eee
Therefore, using \eqref{tQ},
\bee
\tilde u_3 & =&  2\sqrt{2} \arctan( e^{(\bt + i\al)x}) -2\sqrt{2} \arctan \Big(\frac{e^{i\al x} -e^{-i\al x} }{ e^{\bt x} + e^{-\bt x}} \Big) \\
& =&  2\sqrt{2} \arctan \Bigg( \frac{e^{(\bt + i\al)x} - \frac{e^{i\al x} -e^{-i\al x} }{ e^{\bt x} + e^{-\bt x}}}{1 +  e^{(\bt + i\al)x}  . \frac{(e^{i\al x} -e^{-i\al x}) }{ e^{\bt x} + e^{-\bt x}}} \Bigg) \\
& =&  2\sqrt{2} \arctan (e^{(\bt - i\al)x} ) \\
& =&  \overline{\widetilde{Q}^0}.
\eee
Consequently, under the smallness assumptions in \eqref{u012}-\eqref{k1k2} (the open character of these sets is essential) we have that $\tilde u_3$ is still well-defined on the real line with values on the complex plane, and it is close to $ \overline{\widetilde{Q}^0}$, as well as $\widetilde u_2$. 

\medskip

 Let us find an equation for $\widetilde u_3$. As usual, define $u_3 := (\widetilde u_3)_x$. We claim that
\be\label{ecuu3}
\frac {(u_3 -u_0)}{\sqrt{2}}  = \kappa_2 \sin\Big( \frac{\widetilde u_3 + \widetilde u_0}{\sqrt{2}}\Big),
\ee
in other words, $\widetilde u_3\equiv \widetilde u_2$. Similarly, if $\widetilde u_4$ solves
\be\label{permu3}
\frac{\widetilde u_2 - \widetilde u_4 }{2\sqrt{2}} = - \arctan\Big[ \Big(\frac{\kappa_1- \kappa_2}{\kappa_1+ \kappa_2}\Big) \tan\Big(\frac{\widetilde u_{21} -\widetilde u_0}{2\sqrt{2}}\Big) \Big],
\ee
then
\[
\frac {(u_4 -u_0)}{\sqrt{2}}  = \kappa_1 \sin\Big( \frac{\widetilde u_4 + \widetilde u_0}{\sqrt{2}}\Big),
\] 
which implies $\widetilde u_4 \equiv \widetilde u_1$.  Finally, from \eqref{permu2} and \eqref{permu3} we have $\widetilde u_{12} \equiv \widetilde u_{21}$, which proves \eqref{z0z1}. Even better, we have\footnote{Note that this identity is well-defined at one particular set of functions, then extended by continuity.}
\be\label{permu4}
\tan\Big(\frac{\widetilde u_{12} -\widetilde u_0}{2\sqrt{2}}\Big) = -\Big(\frac{\kappa_1+\kappa_2}{\kappa_1-\kappa_2}\Big) \tan\Big(\frac{\widetilde u_2 - \widetilde u_1}{2\sqrt{2}}\Big).
\ee

\medskip

Now let us prove \eqref{ecuu3}. First of all, denote 
\be\label{ele}
\ell:= \frac{\kappa_1+\kappa_2}{\kappa_1-\kappa_2}.
\ee
We have from \eqref{permu2}
\[
\frac{\widetilde u_{12} -\widetilde u_0}{\sqrt{2}} =-2 \arctan \Big[ \ell \tan\Big( \frac{\widetilde u_3 -\widetilde u_1}{2\sqrt{2}}\Big) \Big] ,
\]
so that 
\[
 u_{12} - u_0 = \frac{-\ell (u_3-u_1) \sec^2 \Big(\frac{\widetilde u_3 -\widetilde u_1}{2\sqrt{2}} \Big) }{1+ \ell^2 \tan^2 \Big(\frac{\widetilde u_3 -\widetilde u_1}{2\sqrt{2}} \Big)}.
\]
We also check that 
\[
\sin \Big(\frac{\widetilde u_{12} -\widetilde u_0}{\sqrt{2}} \Big) = \frac{-2\ell \tan \Big( \frac{\widetilde u_3 -\widetilde u_1}{2\sqrt{2}} \Big)}{ 1+ \ell^2 \tan^2 \Big(\frac{\widetilde u_3 -\widetilde u_1}{2\sqrt{2}} \Big)},
\]
and
\[
\cos\Big(\frac{\widetilde u_{12} -\widetilde u_0}{\sqrt{2}} \Big) =  \frac{1-\ell^2  \tan^2 \Big(\frac{\widetilde u_3 -\widetilde u_1}{2\sqrt{2}} \Big) }{1+ \ell^2 \tan^2 \Big(\frac{\widetilde u_3 -\widetilde u_1}{2\sqrt{2}} \Big)}.
\]
Replacing in \eqref{42} we obtain
\bee
-\ell \frac{(u_3 -u_1)}{\sqrt{2}} \sec^2\Big(\frac{\widetilde u_3 -\widetilde u_1}{2\sqrt{2}} \Big)  & =&  \kappa_1 \sin \Big( \frac{\widetilde u_1+\widetilde u_0}{\sqrt{2}} \Big) \Big[ 1+ \ell^2 \tan^2 \Big(\frac{\widetilde u_3 -\widetilde u_1}{2\sqrt{2}} \Big)\Big]\\
& & +  \kappa_2 \sin \Big( \frac{\widetilde u_1+\widetilde u_0}{\sqrt{2}} \Big)\Big[ 1- \ell^2 \tan^2 \Big(\frac{\widetilde u_3 -\widetilde u_1}{2\sqrt{2}} \Big)\Big] \\
& & -2 \ell \kappa_2 \cos\Big( \frac{\widetilde u_1+\widetilde u_0}{\sqrt{2}} \Big) \tan \Big(\frac{\widetilde u_3 -\widetilde u_1}{2\sqrt{2}} \Big).
\eee
Using \eqref{ele} and \eqref{41} we have
\bee
u_3 -u_0 - \sqrt{2}\kappa_1 \sin\Big( \frac{\widetilde u_1+\widetilde u_0}{\sqrt{2}} \Big) & =& - \sqrt{2}\cos^2 \Big(\frac{\widetilde u_3 -\widetilde u_1}{2\sqrt{2}} \Big) \times\\
& & \times  \Big[ (\kappa_1 -\kappa_2) \sin \Big( \frac{\widetilde u_1+\widetilde u_0}{\sqrt{2}} \Big) \Big( 1+ \ell \tan^2 \Big(\frac{\widetilde u_3 -\widetilde u_1}{2\sqrt{2}} \Big)\Big) \\
& & \qquad  -2 \kappa_2 \cos\Big( \frac{\widetilde u_1+\widetilde u_0}{\sqrt{2}} \Big) \tan \Big(\frac{\widetilde u_3 -\widetilde u_1}{2\sqrt{2}} \Big) \Big],
\eee
i.e., after some standard trigonometric simplifications,
\bee
u_3 -u_0 & =& \sqrt{2} \kappa_2 \sin \Big( \frac{\widetilde u_1+\widetilde u_0}{\sqrt{2}} \Big) \cos\Big(\frac{\widetilde u_3 -\widetilde u_1}{\sqrt{2}} \Big)  +  \sqrt{2}\kappa_2 \cos\Big( \frac{\widetilde u_1+\widetilde u_0}{\sqrt{2}} \Big) \sin \Big(\frac{\widetilde u_3 -\widetilde u_1}{2\sqrt{2}} \Big)\\
& =& \sqrt{2} \kappa_2 \sin \Big(\frac{\widetilde u_3 +\widetilde u_0}{\sqrt{2}} \Big),
\eee
as desired.
\end{proof}

Another consequence of the previous result is the following equivalent result. 

\begin{cor}
We have
\[
z_d^0 \equiv \overline{z_b^0} \qquad \hbox{and} \qquad p^0 =\overline{q^0}.
\]
In other words, $\al^* =\al_*$ and $\bt^*=\bt_*.$
\end{cor}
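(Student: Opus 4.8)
The plan is to exploit the reality of the intermediate breather $u_{12}$ — guaranteed by the permutability identity \eqref{z0z1}, since it forces $z_c^0\equiv z_a^0$ to be real, so that $u_{12}=B^0+z_a^0$ is a real-valued function — together with the uniqueness built into Proposition \ref{IFT}. Recall from Lemma \ref{L1} that the pair $(u_1,\kappa_2)=(Q^0+z_b^0,\ \bt-i\al+p^0)$ was produced as the unique Implicit-Function-Theorem solution, near $(Q^0,\bt-i\al)$, of the B\"acklund relation \eqref{42} for the fixed input $u_a=u_{12}$, namely $\tfrac1{\sqrt2}(u_{12}-u_1)=\kappa_2\sin\big(\tfrac1{\sqrt2}(\widetilde u_{12}+\widetilde u_1)\big)$. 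Since $u_{12}$ is real, I would simply take complex conjugates of this identity; using $\overline{\sin z}=\sin\overline z$ and $\overline{Q^0}=\overline Q^0$ (the conjugate soliton, with parameter $-\al$, cf. Corollary \ref{Cor0a}), one finds that the pair $(\overline{u_1},\overline{\kappa_2})=(\overline Q^0+\overline{z_b^0},\ \bt+i\al+\overline{p^0})$ satisfies $\tfrac1{\sqrt2}(u_{12}-\overline{u_1})=\overline{\kappa_2}\sin\big(\tfrac1{\sqrt2}(\widetilde u_{12}+\overline{\widetilde u_1})\big)$, i.e. it is again a solution of the same B\"acklund inversion from $u_{12}$, now sitting near the reference point $(\overline Q^0,\bt+i\al)$.

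On the other hand, the pair $(u_2,\kappa_1)=(\overline Q^0+z_d^0,\ \bt+i\al+q^0)$ satisfies, by \eqref{222} together with $u_{21}=u_{12}$, the identical relation $\tfrac1{\sqrt2}(u_{12}-u_2)=\kappa_1\sin\big(\tfrac1{\sqrt2}(\widetilde u_{12}+\widetilde u_2)\big)$, and it too lies in a small neighborhood of $(\overline Q^0,\bt+i\al)$. Hence both $(\overline{u_1},\overline{\kappa_2})$ and $(u_2,\kappa_1)$ solve the single problem $G(u_{12},u_b,\cdot,\cdot,m)=(0,0)$ for the fixed input $u_a=u_{12}$, inside the uniqueness ball of Proposition \ref{IFT}. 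I would then invoke that uniqueness to conclude $\overline{u_1}=u_2$ and $\overline{\kappa_2}=\kappa_1$, which read precisely $\overline{z_b^0}=z_d^0$ and $\overline{p^0}=q^0$, i.e. $p^0=\overline{q^0}$. Comparing with \eqref{ABStar} and the definitions of $\al_*,\bt_*$, and using $\re p^0=\re q^0$, $\ima p^0=-\ima q^0$, this is exactly $\al^*=\al_*$ and $\bt^*=\bt_*$.

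The step that needs genuine care — the main obstacle — is justifying that Proposition \ref{IFT} is applicable at the conjugate reference point $X^0=(B^0,\overline Q^0,\widetilde B_t^0,\widetilde{\overline{Q}}_t^0,\bt+i\al)$ and that both candidate pairs sit in the same local-uniqueness neighborhood. This amounts to re-running the verification of Lemma \ref{L1} with $\al$ replaced by $-\al$: condition \eqref{rem0} holds since $\re(\bt+i\al)=\bt>0$; condition \eqref{Conds11} is Corollary \ref{Cor0a}; \eqref{Conds12}–\eqref{Conds13} are immediate; the ODE \eqref{Mu0} is solved by the nonvanishing, exponentially decaying function $\mu^0=\bt\widetilde B_1+i\al\widetilde B_2$, the $\al\to-\al$ analogue of the $\mu^0$ used in Lemma \ref{L1} (see Corollary \ref{3p10} and Lemma \ref{Nondeg}); and the nondegeneracy \eqref{Cond111} follows from the computation of Lemma \ref{L1} with $\al\to-\al$, giving $\int_\R\mu^0\sin\big(\tfrac1{\sqrt2}(\widetilde B^0+\widetilde{\overline Q}^0)\big)=\tfrac{-4i\al\bt}{\bt+i\al}\neq0$. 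Once this $\overline Q^0$-analogue of Lemma \ref{L1} is in hand, the uniqueness applies verbatim. One should also check, routinely, that the asymptotic normalizations \eqref{lim0} are preserved under conjugation, which is automatic because the limits of $\widetilde u_{12}+\widetilde u_1$ are the real numbers $0$ and $\sqrt2\pi$, hence fixed by complex conjugation.
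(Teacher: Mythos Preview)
Your argument is correct and follows the same core idea as the paper: exploit the reality of $u_{12}=B^0+z_a^0$ (which holds because $z_c^0\equiv z_a^0$ is real), complex-conjugate one of the two B\"acklund relations linking $u_{12}$ to a complex soliton, and match it against the other via Implicit-Function-Theorem uniqueness.

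The only difference is a matter of economy. You conjugate \eqref{111} and compare with \eqref{222}, which forces you to re-verify the hypotheses of Proposition~\ref{IFT} at the conjugate reference point $(B^0,\overline{Q}^0,\bt+i\al)$ --- essentially re-running Lemma~\ref{L1} with $\al\mapsto-\al$, as you carefully note. The paper instead conjugates \eqref{222} (using $z_c^0=z_a^0$ real) to obtain
\[
\tfrac{1}{\sqrt{2}}\big(B^0+z_a^0 - Q^0 - \overline{z_d^0}\big) = (\bt - i\al + \overline{q^0})\,\sin\Big(\tfrac{\widetilde B^0+\widetilde z_a^0+\widetilde Q^0+\overline{\widetilde z_d^0}}{\sqrt{2}}\Big),
\]
and compares directly with \eqref{111}; the uniqueness of $(z_b^0,p^0)$ as functions of $z_a^0$ is then \emph{already} available from Lemma~\ref{L1}, so no further verification is needed. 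Both routes give the same conclusion; the paper's is one step shorter.
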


\begin{proof}
Note that $z_a^0 \equiv z_c^0$. From \eqref{222} we have
\[
\frac 1{\sqrt{2}} (B^0+z_a^0 - Q^0 - \overline{z_d^0}) =(\bt- i\al + \overline{ q^0}) \sin\Big( \frac{\widetilde B^0 +\widetilde z_a^0 +\widetilde{Q}^0+\overline{\widetilde{z_d}^0}}{\sqrt{2}}\Big). 
\]
From \eqref{111} and the uniqueness of $z_b^0$ and $p^0$ as implicit functions of $z_a^0$, we conclude.
\end{proof}

The key result of this paper is the following surprising property.

\begin{cor}\label{RealY}
The function $y_a^0$ is real-valued. Moreover, there is a small ball of data $z_a^0$ in $H^1(\R)$ for which the corresponding data $z_b^0$ lies in an open set of  $H^1(\R;\Com)$.
\end{cor}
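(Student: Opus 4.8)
The plan is to deduce the realness of $y_a^0$ from a complex-conjugation symmetry of the B\"acklund relation, feeding it into the uniqueness already built into Proposition \ref{IFT}; the two identities $z_d^0 \equiv \overline{z_b^0}$ and $p^0 = \overline{q^0}$ established in the preceding corollary are exactly the inputs needed. First I would start from the relation \eqref{112} that \emph{defines} $y_a^0$ and take its complex conjugate. Using that $\overline{\sin w}=\sin\overline w$, that $\overline{Q^0}=\overline{Q}^0$ and $\overline{\widetilde Q^0}=\widetilde{\overline Q}^0$ (the soliton profile with parameters $\bt,-\al$, which is literally the conjugate of \eqref{Q0}), together with $\overline{z_b^0}=z_d^0$, $\overline{q^0}=p^0$ and the matching of primitives $\overline{\widetilde z_b^0}=\widetilde z_d^0$, one obtains that $\overline{y_a^0}$ satisfies
\[
\frac 1{\sqrt 2}\big(\overline Q^0 + z_d^0 - \overline{y_a^0}\big) = (\bt - i\al + p^0)\sin\Big(\frac{\widetilde{\overline Q}^0 + \widetilde z_d^0 + \overline{\widetilde y_a^0}}{\sqrt 2}\Big).
\]
This is \emph{precisely} the relation \eqref{221} already satisfied by $y_a^0$, now with $\overline{y_a^0}$ playing the role of the unknown.

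Then I would invoke uniqueness. Both $y_a^0$ and $\overline{y_a^0}$ are $O(\eta)$-small, satisfy the same limits at $\pm\infty$, and solve $G(\overline Q^0 + z_d^0,\,\cdot\,,\dots,\bt-i\al+p^0)=(0,0)$ with the \emph{same} fixed datum $u_a=\overline Q^0 + z_d^0$ and the \emph{same} coefficient. I would apply Proposition \ref{IFT} at the base point analogous to the one in Lemma \ref{L2} but with $\al$ replaced by $-\al$ (so $u_a^0=\overline Q^0$, $u_b^0=0$, $m^0=\bt-i\al$); its hypotheses hold by Corollary \ref{Cor0a} applied to $\overline Q^0$, with $\mu^0=\overline Q^0$ nonvanishing and, exactly as in Lemma \ref{L2} using \eqref{zero0} and \eqref{MassQ}, $\int_\R \sin(\widetilde{\overline Q}^0/\sqrt 2)\,\overline Q^0 = 2\sqrt 2\neq 0$. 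Since the inversion $u_a\mapsto u_b$ near $0$ produced by Proposition \ref{IFT} is unique and selects a single $m$, and both candidates share $m=\bt-i\al+p^0$, this forces $y_a^0=\overline{y_a^0}$, i.e. $y_a^0$ is real-valued.

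For the openness, I would observe that the construction of Lemma \ref{L1} goes through verbatim for \emph{complex} $z_a^0\in H^1(\R;\Com)$, since Proposition \ref{IFT} is stated at that level of generality. The resulting map $z_a^0\mapsto z_b^0$ is furnished by the Implicit Function Theorem, hence $C^1$ with derivative an $O(\eta)$-perturbation of a linear isomorphism; it is therefore a local $C^1$-diffeomorphism onto an open neighborhood $\mathcal O$ of $0$ in $H^1(\R;\Com)$. The small \emph{real} ball $z_a^0\in H^1(\R)$ then maps into $\mathcal O$, and by the first part its images are exactly the data $z_b^0$ for which $y_a^0$ is real-valued; keeping $\eta_0$ small enough that $C\eta<\nu_0$ guarantees $\|z_b^0\|_{H^1(\R;\Com)}<\nu$, so Lemma \ref{L2} legitimately applies throughout the ball and the composition defining $y_a^0$ is valid.

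The main obstacle I anticipate is bookkeeping rather than conceptual. One must be careful that the primitives $\widetilde z_b^0,\widetilde z_d^0,\widetilde y_a^0$ and their conjugates are normalized consistently (they are defined only up to additive constants), so that the conjugated relation is \emph{literally} \eqref{221} and not a constant shift of it; likewise one must verify that the normalization pinning down $m$ and $u_b$ in Proposition \ref{IFT} agrees for the two candidates $y_a^0$ and $\overline{y_a^0}$. Both points are resolved by the common decay conditions $\widetilde u_c\to 0$ at $\pm\infty$ imposed systematically in Section \ref{3}, which fix the free constants and render the inversion genuinely unique.
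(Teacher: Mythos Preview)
Your argument for the realness of $y_a^0$ is correct but follows a different path than the paper. The paper exploits the explicit identity \eqref{permu4} from the permutability theorem, rewriting
\[
\tan\Big(\frac{\widetilde B^0+\widetilde z_a^0-\widetilde y_a^0}{2\sqrt 2}\Big)
= -\frac{\kappa_1+\kappa_2}{\kappa_1-\kappa_2}\,\tan\Big(\frac{\widetilde u_2-\widetilde u_1}{2\sqrt 2}\Big),
\]
and then observing, from $p^0=\overline{q^0}$ and $z_d^0=\overline{z_b^0}$, that $\kappa_1+\kappa_2$ is real, $\kappa_1-\kappa_2$ is purely imaginary, and $\widetilde u_2-\widetilde u_1=-2i\,\ima(\widetilde Q^0+\widetilde z_b^0)$ is purely imaginary; hence the right-hand side is real, forcing $\widetilde y_a^0$ (and so $y_a^0$) to be real. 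Your route is instead a symmetry--uniqueness argument: you conjugate \eqref{112}, feed in the same two identities from the preceding corollary to recognise the result as \eqref{221} with $\overline{y_a^0}$ in place of $y_a^0$, and conclude $y_a^0=\overline{y_a^0}$ from the uniqueness in Proposition~\ref{IFT}. This is more conceptual and avoids the trigonometric manipulation of \eqref{permu4}; the paper's route is more explicit and yields a closed formula for $y_a^0$ in terms of $\ima(\widetilde Q^0+\widetilde z_b^0)$. Both rely on the permutability theorem through the preceding corollary.

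Two small corrections. First, the identity you need at the base point $(\overline Q^0,0,\ldots,\bt-i\al)$ is not Corollary~\ref{Cor0a} (which concerns the pair $(B,\overline Q)$) but simply the complex conjugate of Lemma~\ref{Equal0}, i.e.\ \eqref{zero0}--\eqref{zero1} with $\al\mapsto -\al$; the verification of \eqref{Cond110}--\eqref{Cond111} then proceeds exactly as in Lemma~\ref{L2}. Second, in the openness part you overstate: the map $z_a^0\mapsto z_b^0$ cannot be a diffeomorphism onto an open subset of $H^1(\R;\Com)$ when restricted to the real subspace $H^1(\R)$, since the real dimensions do not match. What the Implicit Function Theorem actually gives (and what the paper's one-line proof invokes) is that the complexified map $H^1(\R;\Com)\to H^1(\R;\Com)$ is a local diffeomorphism near $0$, so the image of the real ball is a half-dimensional submanifold sitting inside an open set of admissible data; this is all that is needed for the applications in Section~\ref{4}.
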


\begin{proof}
The second statement is a consequence of the Implicit Function Theorem. On the other hand, the first one is consequence of the permutability theorem. First of all, note that 
\be\label{over}
\overline{(\bt+i\al + q^0)} = \bt -i\al + p^0 = \bt^* -i\al^*.
\ee
Now from \eqref{permu4} we get
\[
\tan \Big(\frac{B^0+z_a^0 -y_a^0}{2\sqrt{2}} \Big) = -\frac{(\bt + \re p^0)}{i(\al - \ima p^0)} \tan\Big(\frac{\widetilde Q^0+\widetilde z_b^0 - \widetilde{\overline{Q}^0} - \widetilde{\overline{z_b}^0}}{2\sqrt{2}}\Big),
\]  
namely
\[
\tan \Big(\frac{B^0+z_a^0 -y_a^0}{2\sqrt{2}} \Big) = -\frac{(\bt + \re p^0)}{(\al - \ima p^0)} \tanh\Big(\frac{\ima (\widetilde Q^0+\widetilde z_b^0)}{\sqrt{2}}\Big),
\]
from which we have $y_a^0 (x)$ real-valued for all $x \in\R$. 
\end{proof}

The main advantage of the double B\"acklund transformation is that  now the dynamics of $y_a^0$ is real-valued.  We apply Theorem \ref{T2a} with the initial data $z_b^0$ to get a complex solution of mKdV $u_b(t) = Q^*(t) + z_b(t)$ defined for all $t\neq t_k$ and satisfying \eqref{ComplexStab}.

\medskip

Now we reconstruct $z_a(t)$. As in \eqref{B0Star}, let us define, using \eqref{tB}, \eqref{ABStar} and \eqref{DGStar},
\be\label{dinatB}
\widetilde B^*(t,x):= \widetilde B(x; \al^*,\bt^*, \delta^* t +x_1,\ga^* t+x_2),
\ee
and
\be\label{dinaB}
B^*(t,x) = \partial_x \widetilde B^*(t,x), \quad \widetilde B_j^* (t,x) := \widetilde B_j(x; \al^*,\bt^*, x_1,x_2)\Big|_{x_1=\delta^* t+x_1, \, x_2 =\ga^* t+x_2}.
\ee
In other words, we recover the original breather in \eqref{breather} with scaling parameters $\al^*$ and $\bt^*$ and shifts $x_1,x_2$, provided they do not depend on time. Finally, as in \eqref{tBt} we define
\[
\widetilde B_t^* (t,x):= \delta \widetilde B_1^* (t,x)+ \ga \widetilde B_2^*(t,x).
\]

\begin{lem}\label{44}
Assume that $t \in \R$ is such that \eqref{BadCond} holds. Then there are unique $z_a=z_a(t) \in H^1(\R;\Com) $  and $w_a=w_a(t) \in H^{-1}(\R;\Com)$ such that 

\be\label{H1ab}
\widetilde z_a+\widetilde z_b \in H^2(\R;\Com),
\ee

\be\label{311}
\frac 1{\sqrt{2}} (B^*+z_a -Q^* -z_b ) =(\bt-i\al + p^0) \sin\Big( \frac{\widetilde B^*+\widetilde z_a +\widetilde Q^*+\widetilde z_b}{\sqrt{2}}\Big),
\ee
where $\widetilde B^*$ and $B^*$ are defined in \eqref{dinatB} and \eqref{dinaB}. Moreover, we have
\bea\label{411}
& & 0= \widetilde B_t^*  + w_a - \widetilde Q_t^*  - w_b   \nonu\\
& &  \qquad + \, (\bt - i\al + p^0)  \Big[ ( B_{x}^* +(z_a)_x +  Q_x^* + (z_b)_x) \cos \Big(\frac{\widetilde B^*+ \widetilde z_a +\widetilde Q^*+ \widetilde z_b}{\sqrt{2}} \Big) \nonu\\
& & \qquad  \qquad \qquad \qquad  + \frac { ((B^*+z_a)^2 + (Q^*+z_b)^2)}{\sqrt{2}}\sin \Big( \frac{\widetilde B^*+ \widetilde z_a +\widetilde Q^* + \widetilde z_b}{\sqrt{2}}\Big) \Big] \nonu,\\
& & 
\eea
and for all $t\neq t_k,$
\[
\|z_a(t)\|_{H^1(\R;\Com)} \leq C\eta.
\]
\end{lem}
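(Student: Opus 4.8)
The plan is to obtain $z_a(t)$ and $w_a(t)$ by a single application of Proposition \ref{IFT2}, this time solving the B\"acklund system for the \emph{$a$-component} while the \emph{$b$-component} $u_b = Q^*(t) + z_b(t)$ (furnished by Lemma \ref{L3}) and the coefficient $m = \bt - i\al + p^0$ are held fixed. I would take as base point $X^0 := (B^*(t), Q^*(t), \widetilde B_t^*(t), \widetilde Q_t^*(t), \bt - i\al + p^0)$ and look for the target $(u_a, v_a) = (B^*(t) + z_a, \widetilde B_t^*(t) + w_a)$. The whole content of the lemma is then a transcription of the output of Proposition \ref{IFT2}.

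First I would verify the structural hypotheses of Proposition \ref{IFT2} at $X^0$. By the corollary to the permutability theorem, $p^0 = \overline{q^0}$, hence $\bt - i\al + p^0 = \bt^* - i\al^*$, so $\re m^0 = \bt^* > 0$ and \eqref{rem0} holds. The algebraic identity $G(X^0) = (0,0)$, which is \eqref{Conds11}, is precisely Lemma \ref{Equal} (the breather--soliton identities \eqref{zerot}--\eqref{zerot2}) applied to the pair $(B^*, Q^*)$ with scaling parameters $\al^*, \bt^*$. From \eqref{zerot} one reads off $\sin\bigl(\frac{\widetilde B^* + \widetilde Q^*}{\sqrt{2}}\bigr) = \frac{B^* - Q^*}{\sqrt{2}(\bt^* - i\al^*)}$; since $B^*$ is a smooth localized breather profile and, by \eqref{BadCond}, $Q^*$ is a genuine $H^1$ soliton away from the blow-up sequence $t_k$, condition \eqref{Conds12} holds. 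Condition \eqref{Conds13} is immediate from \eqref{tBinfty} and \eqref{tQinfty}, which give $\widetilde B^* + \widetilde Q^* \to 0$ at $-\infty$ and $\to \sqrt{2}\pi$ at $+\infty$.

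The one genuinely structural input is the weight $\mu^1$ solving the ODE \eqref{Mu2}. Here I would set $\mu^1 := 1/\mu^*$, where $\mu^*$ is the function of \eqref{mu0} evaluated at the scaling $\al^*, \bt^*$. By Lemma \ref{lemmanu2} this $\mu^1$ has no zeroes, satisfies $(\mu^1)_x = -(\bt^* - i\al^*)\cos\bigl(\frac{\widetilde B^* + \widetilde Q^*}{\sqrt{2}}\bigr)\mu^1$ (which is \eqref{Mu2} with $m = \bt^* - i\al^*$), has bounded logarithmic derivative, and obeys $1/\mu^1 = \mu^* \in H^1(\R;\Com)$ by the exponential decay of Corollary \ref{3p10}; thus \eqref{Cond212} holds. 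With every hypothesis in place, Proposition \ref{IFT2} applies to the known datum $(u_b, v_b, m) = (Q^*(t) + z_b(t), \widetilde Q_t^*(t) + w_b(t), \bt - i\al + p^0)$: keeping $m = m^0$ fixed gives $|m - m^0| = 0$, so one needs only $\|z_b(t)\|_{H^1} < \nu$, which is \eqref{Cua2}. The conclusion produces unique $(u_a, v_a)$; the vanishing of $G_1$ is exactly \eqref{311}, that of $G_2$ is \eqref{411}, estimate \eqref{Conds234} yields $\widetilde z_a + \widetilde z_b \in H^2(\R;\Com)$, i.e. \eqref{H1ab}, and \eqref{Cua} gives $\|z_a(t)\|_{H^1} \le C\eta$.

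The main difficulty is not any single computation but keeping the bookkeeping of parameters and weights consistent with the earlier constructions: one must ensure the fixed coefficient $\bt - i\al + p^0$ equals $\bt^* - i\al^*$, the conjugate of the $m$ used in Lemma \ref{L3}, which is exactly where the permutability identity $p^0 = \overline{q^0}$ is essential, and one must restrict to $t \neq t_k$ so that $Q^*$, $\mu^*$ and the linear solution operator of Proposition \ref{IFT2} remain well-defined. As in Lemma \ref{L3}, the constant $C$ in the final bound degenerates as $t \to t_k$, since $\|\mu^*\|_{H^1}$ (equivalently $\|Q^*\|_{H^1}$) controls the operator norm of the inverse B\"acklund map and blows up there; the estimate is therefore local in $t$, uniform only on time intervals separated from the blow-up sequence.
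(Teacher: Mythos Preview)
Your proof is correct and follows essentially the same route as the paper: apply Proposition \ref{IFT2} at the base point $X^0 = (B^*, Q^*, \widetilde B_t^*, \widetilde Q_t^*, \bt - i\al + p^0)$, verify the hypotheses via Lemma \ref{Equal} and Lemma \ref{lemmanu2} with $\mu^1 = 1/\mu^*$, and read off the conclusions. Your observation that the matching $\bt - i\al + p^0 = \bt^* - i\al^*$ relies on $p^0 = \overline{q^0}$ is exactly what the paper encodes in its reference to \eqref{over}; the paper states this more tersely but uses the same ingredient.
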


\begin{proof}
We apply Proposition \ref{IFT2} at the point
\[
X^0:= (B^*, Q^*, \widetilde B^*_t ,  \widetilde Q^*_t, \bt-i\al + p^0),
\]
because a slight variation of Lemma \ref{BQ0} shows that (compare with \eqref{over})
\[
G(B^*, Q^*, \widetilde B^*_t ,  \widetilde Q^*_t, \bt-i\al + p^0) =(0,0).
\] 
Since $p^0$ is small, 
\[
\re \, (\bt-i\al + p^0) >0.
\]
On the other hand, \eqref{Conds12} is a consequence of \eqref{KK}. Similarly, from \eqref{tQinfty} we get \eqref{Conds13} satisfied. Finally, in order to ensure that \eqref{Cond212} is clearly satisfied,  we apply  Corollary \ref{lemmanu2}: we get
\[
\mu^1 = \frac1{\mu^*}, \quad \hbox{ where }\quad \mu^* := \bt^* \widetilde B_1^* -i\al^* \widetilde B_2^*;
\]
see Corollary \ref{3p10} and \eqref{dinaB}. Then we conclude thanks to Proposition \ref{IFT2}.
\end{proof}

\begin{cor}
The function $z_a(t)$ as defined in \eqref{311} is real-valued.
\end{cor}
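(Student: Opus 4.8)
The plan is to prove that $B^*(t) + z_a(t)$ is real-valued; since the breather profile $\widetilde B^*$, and hence $B^*$, is real, this is equivalent to the stated claim about $z_a(t)$. I would mirror exactly the time-zero argument of Corollary \ref{RealY}, running the permutability theorem at each fixed time $t \neq t_k$. The single new input is that $y_a(t)$ remains real-valued for all time: this is immediate from Theorem \ref{KPV}, since the real initial datum $y_a^0$ produces a real-valued mKdV flow.

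First I would set up the four-vertex B\"acklund diagram at time $t$, based at $u_0 := y_a(t)$, with the two spectral parameters $\kappa_1 = \bt + i\al + q^0$ and $\kappa_2 = \bt - i\al + p^0$. By Lemma \ref{L3}, applying the $\kappa_1$-transform to $u_0$ produces $u_1 := Q^*(t) + z_b(t)$ through \eqref{312}. The crucial structural fact, inherited from $p^0 = \overline{q^0}$ (equivalently \eqref{over}, i.e. $\overline{\kappa_1} = \kappa_2$), is a conjugation symmetry: conjugating \eqref{312} and using that both $y_a(t)$ and $\widetilde B^*$ are real shows that $\overline{u_1} = \overline{Q^*(t)} + \overline{z_b(t)}$ is precisely the $\kappa_2$-transform of $u_0$; call this vertex $u_2$. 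Next, Lemma \ref{44} produces $u_{12} := B^*(t) + z_a(t)$ as the $\kappa_2$-transform of $u_1$ through \eqref{311}, and conjugating \eqref{311} (again using $\overline{\kappa_2} = \kappa_1$ and the reality of $\widetilde B^*$) shows that $\overline{u_{12}} = B^*(t) + \overline{z_a(t)}$ is the $\kappa_1$-transform of $u_2$, i.e. the remaining vertex $u_{21}$.

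Finally I would invoke the permutability theorem, whose proof is the purely algebraic identity \eqref{permu2}--\eqref{permu4} in the $u_i$ and $\kappa_i$, and therefore holds verbatim at any $t \neq t_k$ where the four B\"acklund inversions are well-defined. This yields $u_{12} \equiv u_{21}$, which combined with the previous step gives
\[
B^*(t) + z_a(t) = u_{12} = u_{21} = \overline{u_{12}} = B^*(t) + \overline{z_a(t)},
\]
so $z_a(t)$ is real-valued. The main obstacle I anticipate is bookkeeping rather than conceptual: one must verify that the conjugate objects $u_2$ and $u_{21}$ land in the same small neighborhoods in which the uniqueness of the Implicit Function Theorem in Propositions \ref{IFT}--\ref{IFT2} applies, so that $\overline{u_1}$ genuinely \emph{is} the $\kappa_2$-transform of $u_0$ and $\overline{u_{12}}$ genuinely \emph{is} $u_{21}$, and that the permutability formula, established at $t=0$, transports to arbitrary $t \neq t_k$ using only the smallness bounds furnished by Lemmas \ref{L3} and \ref{44}.
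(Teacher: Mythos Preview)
Your proposal is correct and follows essentially the same route as the paper: the paper's proof is literally ``the same proof as in Corollary \ref{RealY} works \emph{mutatis mutandis}, since now $y_a(t)$ is real-valued,'' and Corollary \ref{RealY} is exactly the permutability-plus-conjugation argument you outline. The only cosmetic difference is that the paper reads off reality directly from the explicit nonlinear superposition formula \eqref{permu4} (the right-hand side is manifestly real once $\kappa_2=\overline{\kappa_1}$ and $\widetilde u_2=\overline{\widetilde u_1}$), whereas you phrase it as $u_{12}=u_{21}=\overline{u_{12}}$ via uniqueness; this sidesteps the bookkeeping concern you flag about landing in the correct IFT neighborhoods, but the two presentations are equivalent.
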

\begin{proof}
 The same proof as in Corollary \ref{RealY} works \emph{mutatis mutandis}, since now $y_a(t)$ is real-valued.
\end{proof}

\begin{prop}
For all $t\neq t_k$, $u_a= B^*+z_a$ is an $H^1$ real-valued solution to mKdV with initial data $u_0$.  Therefore, by uniqueness,\footnote{Technically, what we need is a result about unconditional uniqueness, however, from \cite{KO} one can conclude that such a result is valid for mKdV on the line if we consider data with $H^1$ regularity. } $B^*+z_a \equiv u$.
\end{prop}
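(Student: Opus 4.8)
The plan is to check the three defining properties of $u_a=B^*+z_a$, namely that it solves \eqref{mKdV}, that it is real-valued and belongs to $H^1(\R)$, and that its value at $t=0$ is $u_0$, and then to appeal to the unconditional uniqueness of the $H^1$ mKdV flow to identify it with $u$.

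First I would verify that $u_a$ solves mKdV. By Theorem \ref{T2a}, on every interval with $|t-t_k|\geq\ve_0$ the function $u_b=Q^*+z_b$ lies in $C(I;H^1(\R;\Com))$, solves \eqref{mKdV} in the $H^1$-sense, and satisfies $(u_b)_t=(\widetilde Q_t^*+w_b)_x$; thus $v_b:=\widetilde Q_t^*+w_b$ is exactly the distribution required by Theorem \ref{Cauchy1}. Lemma \ref{44} produces $z_a$ by solving the B\"acklund system \eqref{311}--\eqref{411} with the \emph{fixed} coefficient $m=\bt-i\al+p^0$, for which $\re m>0$ once $\eta$ is small; this is precisely the configuration of Theorem \ref{Cauchy1} (and the positive sign in \eqref{G1} is what allows $m$ to stay fixed). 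Applying Theorem \ref{Cauchy1} with $u_b$ as the known solution then gives $u_a=B^*+z_a\in C(I;H^1)$, with $(u_a)_t=(\widetilde B_t^*+w_a)_x$, solving \eqref{mKdV} in the $H^1$-sense. Together with the Corollary following Lemma \ref{44}, which asserts that $z_a(t)$ is real-valued, and with $B^*\in H^1(\R)$ and $\|z_a(t)\|_{H^1}\leq C\eta$, this shows that $u_a$ is a real-valued $H^1(\R)$ solution of mKdV.

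Next I would match the datum at $t=0$. The Remark following Lemma \ref{L3} gives $(Q^*+z_b)(0)=Q^0+z_b^0$, so at $t=0$ the reconstruction \eqref{311} becomes the B\"acklund identity for $B^*(0)+z_a(0)$ driven by $u_b(0)=Q^0+z_b^0$ and the fixed coefficient $\kappa_2=\bt-i\al+p^0$. On the other hand $u_0=B^0+z_a^0$ satisfies exactly this identity, namely \eqref{111}, with the same driving term, the same coefficient, the same boundary behavior \eqref{Conds23a}, and the same orthogonality selecting the solution. Since both $B^0+z_a^0$ and $B^*(0)+z_a(0)$ lie in the small neighborhood of the breather in which Proposition \ref{IFT2} delivers a \emph{unique} solution, the uniqueness clause forces $B^*(0)+z_a(0)=B^0+z_a^0=u_0$. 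Hence $u_a(0)=u_0$.

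Finally, on the interval containing $t=0$ both $u_a$ and the global solution $u$ of Theorem \ref{KPV} are real-valued $H^1(\R)$ solutions of \eqref{mKdV} with the same datum $u_0$, so the unconditional uniqueness for mKdV in $H^1(\R)$ (Theorem \ref{KPV} together with \cite{KO}) yields $u_a\equiv u$ there; the same reasoning, applied to the double-B\"acklund image of the \emph{global} real solution $y_a$ via Theorem \ref{Cauchy1}, propagates the identity to every interval $(t_k,t_{k+1})$, the passage across each $t_k$ being controlled by the continuity of the global flow $u$. I expect the datum matching of the previous paragraph to be the main obstacle: the two profiles are built over the \emph{different} base breathers $B^0$ (parameters $\al,\bt$) and $B^*(0)$ (parameters $\al^*,\bt^*$), which agree only up to $O(\eta)$, so one cannot compare them by inspection — the argument must be that they satisfy one and the same B\"acklund equation for $u_b(0)=Q^0+z_b^0$ with the common coefficient $\kappa_2$ and identical decay, after which the uniqueness part of Proposition \ref{IFT2} identifies them. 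The real-valuedness furnished by the permutability theorem is equally indispensable here, for it is what places $u_a$ in the \emph{real} energy space where the Kenig--Ponce--Vega uniqueness theory is available.
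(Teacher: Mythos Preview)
Your proposal is correct and follows the paper's approach: the paper's proof is a single sentence invoking Theorem \ref{Cauchy1} on the pair \eqref{311}--\eqref{411}, with real-valuedness coming from the preceding Corollary, and the initial-data matching you work out is precisely the argument the paper carries out (in slightly more detail) in Section \ref{7}. Your anticipation of the $B^0$ versus $B^*(0)$ base-point issue, resolved via uniqueness in the Implicit Function Theorem, is exactly how the paper handles it there.
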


\begin{proof}
Since $u_b=Q^*+z_b$ solves mKdV, we use \eqref{311}-\eqref{411} and Theorem \ref{Cauchy1} to conclude.
\end{proof}

\bigskip

\section{Stability of breathers}\label{7}

\medskip

In this final paragraph we prove Theorem \ref{T1}. In what follows, we assume that $u_0\in H^1(\R)$ satisfies (\ref{In}) for some $\eta$ small. Let $u\in C(\R; H^1(\R))$ be the --unique in a certain sense-- associated solution of the Cauchy problem (\ref{mKdV}), with initial data $u(0)=u_0$. 
Finally, we recall the conserved quantities mass \eqref{M1} and energy \eqref{E1}.

\begin{proof}[\bf Proof of Theorem \ref{T1}]
Consider $\ve_0>0$ small but fixed, and $0<\eta<\eta_0$ small. From Lemmas \ref{L1} and \ref{44} the proof is not difficult. Indeed, define the tubular neighborhood
\bea\label{V}
\mathcal V(A_0,\eta):= \Big\{  U \in H^1(\R) \ | \  \inf_{\tilde x_1,\tilde x_2\in \R} \|U - B(\cdot ; \al,\bt, \tilde x_1,\tilde x_2)\| \leq A_0 \eta   \Big\}.
\eea
Note that $B$ represents here the breather profile defined in \eqref{BBB}. The original breather $B(t)$ from \eqref{breather} can be recovered using \eqref{dinatB} as follows (there is a slight abuse of notation here, but it is easily understood):
\[
B(t,x;\al,\bt,x_1,x_2) = B(x;\al,\bt,\delta t +x_1,\ga t +x_2).
\]
We will prove that if $u(t) \in \mathcal V(A_0,\eta)$ for $t\in [0,T_0]$, with $T_0>0$ and $|T_0- t_k|>\ve_0$, for all $k\in \Z$, then 
\[
u(t) \in\mathcal V(A_0/2,\eta),
\]
which proves the result for all positive times far from the points $t_k$. First of all, by taking $\eta_0>0$ smaller if necessary, and $\eta\in (0,\eta_0)$, we can ensure that there are unique $x_1(t)$, $x_2(t) \in \R$, defined on $[0,T_0]$, and such that
\be\label{Z}
z(t,x):= u(t,x) - B(x;\al,\bt,\delta t +x_1(t),\ga t +x_2(t))
\ee
satisfies
\be\label{O1}
\int_\R z(t,x)B_1(x;\al,\bt,\delta t +x_1(t),\ga t +x_2(t)) dx=0,
\ee
and
\be\label{O2}
\int_\R z(t,x)B_2(x;\al,\bt,\delta t +x_1(t),\ga t +x_2(t)) dx =0.
\ee
The directions $B_1$ and $B_2$ are defined in \eqref{B1}-\eqref{B2} (see \cite{AM} for a similar statement and its proof). Moreover, we have
\[
\|z(0)\|_{H^1(\R)} \lesssim \eta,
\]
and similar estimates for $x_1(0)$ and $x_2(0)$, with constants not depending on $A_0$ large. Therefore condition \eqref{BadCondFF} is not satisfied. For the sake of simplicity, we can assume $x_1(0)=x_2(0)=0$, otherwise we perform a shift in space and time on the solution to set them equal zero. 

\medskip

Define $z_a^0 := z(0)$ and apply Lemma \ref{L1}, and then Lemma \ref{L2} with the corresponding $z_b^0$ obtained from Lemma \ref{L1}. We will obtain a \emph{real-valued} seed $y_a^0$ small in $H^1(\R)$. Note that the constants involved in each inversion do not depend on $A_0$. In particular, the differences between $\al$ and $\al^*$, and $\bt$ and $\bt^*$ are not depending on $A_0$:
\be\label{abab}
|\al -\al^*| +|\bt -\bt^*| \lesssim \eta.
\ee

\medskip

Next, we evolve the mKdV equation with initial data $y_a^0$. From Theorem \ref{KPV} we have the bound \eqref{MaMa} for the dynamics $y_a(t)$. On the other hand, decomposition \eqref{O1}-\eqref{O2} implies that 
\be\label{x1x2}
|x_1'(t)| +|x_2'(t)| \lesssim A_0 \eta,
\ee
from which the set of points where condition \eqref{BadCond} is not satisfied is still a countable set of isolated points (see Lemma \ref{4p2}). 

\medskip

Now we are ready to apply Lemmas \ref{L3} and \ref{44} with parameters $\al^*, \bt^*$ and shifts $x_1(t)$ and $x_2(t)$ in \eqref{dinatQ}, \eqref{dinaQ} and \eqref{dinatB}-\eqref{dinaB}. In that sense, we have chosen a unique set of parameters for each fixed time $t$, and the mKdV solution that we choose is the same as the original $u(t)$. Indeed, just notice that at $t=0,$ we have from \eqref{312} at $t=0$ and \eqref{112}, 
\[
\frac 1{\sqrt{2}} (Q^*(0) + z_b(0)  -y_a^0) =(\bt + i\al + q^0) \sin\Big( \frac{\widetilde Q^*(0)+\widetilde z_b(0)+\widetilde y_a^0}{\sqrt{2}}\Big),
\]
\[
\frac 1{\sqrt{2}} (Q^0+z_b^0-y_a^0) =(\bt + i\al + q^0) \sin\Big( \frac{\widetilde Q^0+\widetilde{z}_b^0 +\widetilde y_a^0}{\sqrt{2}}\Big).
\]
Using the uniqueness of the solution obtained by the Implicit function theorem in a neighborhood of the base point, we have
\be\label{igualdad}
z_b(0) = Q^0-Q^*(0)+z_b^0 \sim z_b^0.
\ee
Now we use \eqref{311} at $t=0$ and \eqref{111}:
\[
\frac 1{\sqrt{2}} (B^*(0)+z_a(0) -Q^*(0) -z_b(0) ) =(\bt-i\al + p^0) \sin\Big( \frac{\widetilde B^*(0)+\widetilde z_a(0) +\widetilde Q^*(0)+\widetilde z_b(0)}{\sqrt{2}}\Big),
\]
and
\[
\frac 1{\sqrt{2}} (B^0+z_a^0 -Q^0 -z_b^0) =(\bt-i\al + p^0) \sin\Big( \frac{\widetilde B^0+\widetilde z_a^0 +\widetilde Q^0 +\widetilde{z}_b^0}{\sqrt{2}}\Big).
\]
From \eqref{igualdad}, we have
\[
\frac 1{\sqrt{2}} (B^*(0)+z_a(0) -Q^0 -z_b^0 ) =(\bt-i\al + p^0) \sin\Big( \frac{\widetilde B^*(0)+\widetilde z_a(0) +\widetilde Q^0+\widetilde z_b^0}{\sqrt{2}}\Big).
\]
Once again, since $B^0$ and $B^*(0)$ are close, using the uniqueness of the solution obtained via the Implicit function Theorem, we conclude that 
\[
B^*(0)+z_a(0) =B^0+z_a^0.
\]
Since both initial data are the same, we conclude that the solution obtained via the B\"acklund transformation is $u(t)$.

\medskip

Note that the constants involved in the inversions are not  depending on $A_0$. We finally get 
\be\label{zeroA}
\sup_{|t-t_k| \geq \ve_0}\big\| u(t) - B^*(t) \big\|_{H^1(\R)}\leq C_0 \eta,
\ee
where 
\[
B^*(t,x):= B(x;\al^*,\bt^*,\delta^* t +x_1(t),\ga^* t +x_2(t)).
\]
Finally, from \eqref{abab} and after redefining the shift parameters, we get the desired conclusion, since for $A_0$ large enough, we have $C_0 \leq \frac 12 A_0$.

\medskip

Now we deal with the remaining case $t\sim t_k$. Fix $k\in \Z$. Note that $z_a = u -B^*$ satisfies the equation
\be\label{equZ}
(z_a)_t + [(z_{a})_{xx} + 3(B^*)^2 z_a +  3 B^* z_a^2 + z_a^3]_x  + x_1'(t)B_1^* + x_2'(t) B_2^* =0,
\ee
in the $H^1$-sense. In what follows, we will prove that, maybe taken $\ve_0$ smaller but independent of $k$, we have 
\be\label{Boots1}
\sup_{|t-t_k| \leq \ve_0}\big\| u(t) - B^*(t) \big\|_{H^1(\R)}\leq 4 A_0 \eta.
\ee
Since $A_0$ grows with $\ve_0$ small, that implies that, after choosing $\eta_0$ smaller if necessary, such an operation can be performed without any risk.

\medskip

In what follows, we assume that there is $T^* \in ( t_k -\ve_0, t_k+\ve_0]$ such that, for all $t\in [t_k -\ve_0, T^*] $,
\be\label{4A0}
\big\| z_a(t) \big\|_{H^1(\R)}\leq 4 A_0 \eta,
\ee
and $T^*$ is maximal in the sense of the above definition (i.e., there is no $T^{**}>T^*$ satisfying the previous property). If $T^* =  t_k+\ve_0$, there is nothing to prove and   \eqref{Boots1} holds.

\medskip

Assume $T^*< t_k+\ve_0$. Now we consider the quantity
\[
\frac 12\int_\R z_a^2(t), \quad t\in [t_0-\ve_0, T^*].
\]
We have after \eqref{equZ},
\bee
\partial_t \frac 12\int_\R z_a^2(t) & =&   \int_\R (z_a)_x \Big[ 3(B^*)^2 z_a + 3B^* z_a^2 +z_a^3\Big](t)\\
& &  +x_1'(t) \int_\R z_a(t) B_1^* + x_2'(t) \int_\R z_a(t) B_2^*.
\eee
Using \eqref{4A0} and \eqref{x1x2}, we have for some --explicit-- fixed constant $C>0$ depending only on $\al,\bt$ and $4$, and $\eta_0$ even smaller if necessary,
\[
\abs{\partial_t \frac 12\int_\R z_a^2(t) } \leq CA_0^2 \eta^2.
\] 
After integration in time and using \eqref{zeroA}, we have
\[
\int_\R z_a^2(T^*) \leq \int_\R z_a^2(t_0-\ve_0) + C \ve_0A_0^2 \eta^2 \leq 1.9 A_0^2 \eta^2,
\]
if $\ve_0$ is small but fixed. A similar estimate can be obtained for $\|(z_a)_x(t)\|_{H^1(\R)}$ by proving an estimate of the form
\[
\abs{\partial_t \frac 12\int_\R (z_a)_x^2(t)} \leq  CA_0^2 \eta^2.
\]
Therefore estimate \eqref{4A0} has been bootstrapped, which implies that $T^* =t_0+\ve_0.$  Note that the estimates do not depend on $k$, but only on the length of the intervals $\sim \ve_0$.\footnote{Note that an argument involving the uniform continuity of the mKdV flow will not work in this particular case since the sequence of times $(t_k)$ is unbounded.}

\medskip

We conclude that there is $\tilde A_0>0$ fixed such that 
\[
\sup_{t\in\R }\big\| u(t) - B^*(t) \big\|_{H^1(\R)}\leq \tilde A_0 \eta.
\]

\medskip

Finally, estimates \eqref{Fn1} and \eqref{Fn2} are obtained from \eqref{x1x2}, and using the fact that $\al^*$ and $\bt^*$ are close to $\al$ and $\bt$ in terms of $C\eta$.  The proof is complete.
\end{proof}

\begin{rem}
From the proof and the results in \cite{CKSTT} it is easy to realize that the evolution of breathers can be estimated in a polynomial form in time for any $s>\frac 14$, however, in order to make things simpler, we will not address this issue.
\end{rem}

\begin{cor}
We have for all $t\neq t_k$
\bee
 \frac 12 \int_\R (B^*+z_a)^2(t) & =&  \frac 12 \int_\R (Q^*+z_b)^2(t) + 2(\bt^* -i\al^*) \\
&  = &  M[y_a^0]  + 4\bt^* .
\eee
Moreover, this identity can be extended to any $t\in \R.$
\end{cor}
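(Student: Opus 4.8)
The plan is to repeat, \emph{mutatis mutandis}, the argument of Corollary \ref{CLaw0}, now based on the second B\"acklund identity \eqref{311} instead of \eqref{312}. Indeed, \eqref{311} has exactly the same algebraic structure as \eqref{312}: the pair $(Q^*+z_b,\ y_a)$ and the coefficient $\bt+i\al+q^0$ are replaced by $(B^*+z_a,\ Q^*+z_b)$ and the coefficient $\bt-i\al+p^0$, which by \eqref{over} equals $\bt^*-i\al^*$. Throughout I would work at a fixed time $t\neq t_k$, where $Q^*$ is well-defined and \eqref{311} holds.

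First I would multiply both sides of \eqref{311} by $\tfrac1{\sqrt2}(B^*+z_a+Q^*+z_b)$. The left-hand side becomes $\tfrac12[(B^*+z_a)^2-(Q^*+z_b)^2]$, while the right-hand side is recognized as a total derivative, namely
\[
\tfrac12\big[(B^*+z_a)^2-(Q^*+z_b)^2\big] = -(\bt-i\al+p^0)\Big[\cos\Big(\frac{\widetilde B^*+\widetilde z_a+\widetilde Q^*+\widetilde z_b}{\sqrt2}\Big)\Big]_x,
\]
exactly as in the proof of Corollary \ref{CLaw0}, since the $x$-derivative of the argument of $\cos$ is precisely $\tfrac1{\sqrt2}(B^*+z_a+Q^*+z_b)$. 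Integrating over $\R$, the contribution is the boundary value of $-(\bt-i\al+p^0)\cos(\cdots)$. Here the key input is the behavior of the kink argument at infinity: by \eqref{tBinfty} one has $\widetilde B^*\to0$ at both ends, by \eqref{tQinfty} $\widetilde Q^*\to0$ at $-\infty$ and $\widetilde Q^*\to\sqrt2\pi$ at $+\infty$, and by \eqref{H1ab} $\widetilde z_a+\widetilde z_b\in H^2(\R;\Com)$ vanishes at $\pm\infty$. Hence the argument of $\cos$ tends to $0$ at $-\infty$ and to $\pi$ at $+\infty$, so the boundary term equals $-(\bt-i\al+p^0)[(-1)-1]=2(\bt^*-i\al^*)$, which is the first claimed equality.

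Finally I would insert Corollary \ref{CLaw0}, which gives $\tfrac12\int_\R(Q^*+z_b)^2(t)=M[y_a^0]+2(\bt^*+i\al^*)$, and add the two constants. The imaginary parts cancel, $2(\bt^*+i\al^*)+2(\bt^*-i\al^*)=4\bt^*$, producing the second equality $M[y_a^0]+4\bt^*$. This cancellation---forced by the fact that the second inversion uses the conjugate coefficient $\bt^*-i\al^*$---is the only genuinely delicate point, and it is exactly where the permutability theorem enters, through the identity \eqref{over}. For the extension to all $t$, note that the last Proposition of Section \ref{6} gives $B^*+z_a\equiv u$, so the left-hand side is simply the conserved mass $M[u](t)=M[u_0]$; being constant and continuous in $t$ while the $t_k$ are isolated, the identity extends across each blow-up time with the same value $M[y_a^0]+4\bt^*$.
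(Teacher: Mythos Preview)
Your proof is correct and is exactly what the paper intends: its own proof is simply ``Same as Corollary \ref{CLaw0},'' and you have carried out precisely that argument, multiplying \eqref{311} by $\tfrac{1}{\sqrt2}(B^*+z_a+Q^*+z_b)$, using the boundary values from \eqref{tQinfty}, \eqref{tBinfty}, \eqref{H1ab}, and then invoking Corollary \ref{CLaw0} together with \eqref{over} to combine the two constants. Your observation that the extension to all $t$ follows because $B^*+z_a\equiv u$ and $M[u]$ is conserved is a nice addition that the paper leaves implicit.
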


\begin{proof}
Same as Corollary \ref{CLaw0}.
\end{proof}

Finally, we recall that $\ga^* = 3(\al^*)^2 -(\bt^*)^2$ and $E[u] = \frac 12 \int_\R u_x^2 - \frac 14\int_\R u^4.$

\begin{cor}
Assume that $t\neq t_k$  for all $k\in \Z$. Then we have
\bee
E[B^* + z_a](t)  & =&  E[Q^* + z_b](t)   - \frac43 (\bt^* - i\al^*)^3 \\
 &=&   E[y_a^0]  +\frac 43 \bt^* \ga^* .
\eee
Finally, this quantity can be extended in a continuous form to every $t\in \R.$ 
\end{cor}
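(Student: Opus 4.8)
The plan is to mimic, step for step, the proof of Corollary~\ref{CLaw3}, but applied to the \emph{second} B\"acklund identity \eqref{311} rather than the first one \eqref{312}. Identity \eqref{311} links $u_a:=B^*+z_a$ and $u_b:=Q^*+z_b$ through the parameter $m:=\bt^*-i\al^*=\bt-i\al+p^0$, in exactly the way \eqref{312} linked $(Q^*+z_b,\,y_a)$ through $\bt^*+i\al^*$. So the first equality will come from the B\"acklund relation alone, and the second from chaining the result with Corollary~\ref{CLaw3} and collapsing the two cubic terms.

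For the first equality, I would differentiate \eqref{311} in $x$ to get the analogue of \eqref{312x},
\[
(B^*+z_a)_x-(Q^*+z_b)_x = m\cos\Big(\tfrac{\widetilde B^*+\widetilde z_a+\widetilde Q^*+\widetilde z_b}{\sqrt2}\Big)(B^*+z_a+Q^*+z_b),
\]
and then reproduce the manipulations of Corollary~\ref{CLaw3}: multiply by $(B^*+z_a)_x+(Q^*+z_b)_x$, remove the cross terms via the two auxiliary products (multiplying the differentiated relation by $Q^*+z_b$ and by $B^*+z_a$ and subtracting), reach the analogue of \eqref{Ene3}, and use \eqref{311} once more to write the leftover term as the exact $x$-derivative of $\tfrac23 m^3\cos^3(\cdots)$. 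Integrating over $\R$, the only surviving boundary contribution is this cubic term; because $\widetilde B^*(\pm\infty)=0$, $\widetilde Q^*(-\infty)=0$, $\widetilde Q^*(+\infty)=\sqrt2\,\pi$ and $\lim_{\pm\infty}(\widetilde z_a+\widetilde z_b)=0$, the argument runs from $0$ to $\pi$, so $\cos^3(\cdots)\big|_{-\infty}^{+\infty}=-2$, while every integration-by-parts boundary term vanishes by the spatial decay of $B^*,Q^*,z_a,z_b$. After dividing by two this gives
\[
E[B^*+z_a](t)=E[Q^*+z_b](t)-\tfrac23(\bt^*-i\al^*)^3.
\]

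For the second equality I would substitute the value of $E[Q^*+z_b]$ furnished by Corollary~\ref{CLaw3}, namely $E[Q^*+z_b](t)=E[y_a^0]-\tfrac23(\bt^*+i\al^*)^3$, and simplify. Since $\ga^*=3(\al^*)^2-(\bt^*)^2$, the sum of cubes is real,
\[
(\bt^*+i\al^*)^3+(\bt^*-i\al^*)^3=2\bt^*\big((\bt^*)^2-3(\al^*)^2\big)=-2\bt^*\ga^*,
\]
so that $E[B^*+z_a](t)=E[y_a^0]+\tfrac43\,\bt^*\ga^*$. Finally, for the continuous extension it is cleanest to recall that $u=B^*+z_a$ is the genuine $H^1$ mKdV solution, whose energy is conserved; hence $E[B^*+z_a](t)=E[u_0]$ is literally constant in $t$, and the identity just above pins down that constant to be $E[y_a^0]+\tfrac43\,\bt^*\ga^*$ for every $t\in\R$.

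The algebra is routine and identical in form to Corollary~\ref{CLaw3}, so the only real care lies in two bookkeeping points. First, one must confirm that each boundary term generated by the integrations by parts truly vanishes, which relies on the decay of the profiles together with the stated limits of the primitives. Second---and this is the step I expect to be the most delicate---one must carry out the complex-arithmetic cancellation turning the a priori complex combination $-\tfrac23\big[(\bt^*+i\al^*)^3+(\bt^*-i\al^*)^3\big]$ into the manifestly real $\tfrac43\,\bt^*\ga^*$; this cancellation of imaginary parts is precisely what is forced by the fact that $u=B^*+z_a$ is real-valued, so its energy must be real.
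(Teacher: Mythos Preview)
Your approach is exactly the paper's: its entire proof reads ``Same as Corollary~\ref{CLaw3}'', and you carry out precisely that computation on the second B\"acklund identity \eqref{311} and then chain it with Corollary~\ref{CLaw3}. Note that the argument indeed yields the intermediate coefficient $-\tfrac{2}{3}(\bt^*-i\al^*)^3$ that you obtain (the $-\tfrac{4}{3}$ printed in the statement is a misprint; only with $-\tfrac{2}{3}$ do the two cubic terms combine to the real value $\tfrac{4}{3}\bt^*\ga^*$).
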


\begin{proof}
Same as Corollary \ref{CLaw3}.
\end{proof}


\bigskip

\section{Asymptotic Stability}\label{8}

\medskip

{\bf We finally prove Theorem  \ref{T3}}. Note that for some $c_0>0$ depending on $\eta>0,$
\be\label{Asy0}
\lim_{t\to +\infty} \|y_a(t)\|_{H^1(x\geq c_0 t)} =0.
\ee
This result can be obtained by adapting the proof for the soliton case in the Martel and Merle's paper \cite{MMnon}.  Indeed, consider 
\[
\phi(x):= \frac K\pi \arctan (e^{x/K}), \quad K>0,
\]
so that 
\be\label{properties}
\lim_{-\infty} \phi =0, \quad \lim_{+\infty} \phi =1, \quad  \phi''' \leq \frac1{K^2} \phi', \quad \phi'>0 \hbox{  on } \R.
\ee
Fix $c_0, t_0>0$. Consider the quantities
\[
I(t):= \frac 12\int_\R y_a^2 (t) \phi(x-c_0 t_0 + \frac 12c_0 (t_0 -t)),
\] 
\[
J(t):= \int_\R \Big[  \frac 12 (y_a)_x^2 (t) -\frac 14 y_a^4 (t) + \frac 12 y_a^2 (t)\Big] \phi(x-c_0 t_0 + \frac 12c_0 (t_0 -t)).
\] 
It is not difficult to see that 
\[
I'(t) = -\frac 14c_0 \int_\R y_a^2 \phi' (t)+ \frac 12 \int_\R y_a^2 \phi''' (t) -\frac 32 \int_\R (y_a)_x^2 \phi'(t) +\frac 34 \int_\R y_a^4 \phi'(t),
\]
so that using \eqref{properties}, and if $c_0>0$ is small (but depending on $\eta$ smaller if necessary),
\[
I'(t) \leq 0.
\]
We have then
\[
I(t_0) \leq I(0)= \frac 12\int_\R y_a^2 (0) \phi(x- c_0 t_0),
\]
and
\[
\lim_{t\to +\infty} I(t) =0.
\]
A similar result holds for $J(t)$, which proves \eqref{Asy0}.

\medskip

Note that  $\widetilde z_b + \widetilde y_a \in H^2(\R;\Com)$ (see \eqref{H1ba}). In what follows, we will prove that this function satisfies \emph{better estimates} than $y_a$ and $z_b$ if $x$ is taken large. 

\medskip

Fix $t\neq t_k$ large, with $|t - t_k|\geq \ve_0$. We use the notation 
\be\label{trick}
\widetilde z_c: =\widetilde y_a+ \widetilde z_b.
\ee
Note that from \eqref{Conds234} we have 
\[
\|\widetilde z_c(t) \|_{H^2(\R;\Com)} \leq C\nu,
\]
with $C=C(\ve_0)$ independent of time.
From the B\"acklund transformation \eqref{312} we obtain
\bee
(\widetilde z_c)_x  - 2y_a &  = &  \sqrt{2}(\bt + i\al + q^0) \Big[ \sin\Big( \frac{\widetilde Q^*+\widetilde z_c}{\sqrt{2}}\Big) - \sin\Big( \frac{\widetilde Q^*}{\sqrt{2}}\Big) \Big] \\
&  = &  \sqrt{2}(\bt + i\al + q^0) \Big[ \sin\Big( \frac{\widetilde Q^*}{\sqrt{2}}\Big) \Big\{  \cos\Big( \frac{\widetilde z_c}{\sqrt{2}}\Big) -1 \Big\}  + \sin\Big( \frac{\widetilde z_c}{\sqrt{2}}\Big)\cos\Big( \frac{\widetilde Q^*}{\sqrt{2}}\Big) \Big] \\
& =&  Q^* \Big\{  \cos\Big( \frac{\widetilde z_c}{\sqrt{2}}\Big) -1 \Big\}  + \sqrt{2} \sin\Big( \frac{\widetilde z_c}{\sqrt{2}}\Big) \frac{Q^*_x}{Q^*}.
\eee
Assume now that $x>c_0 t/2$. Then we have for some fixed constant $c>0$,
\[
\abs{  \frac{Q^*_x}{Q^*}  + m} \leq e^{-c x}, \qquad m=\bt + i \al + q^0 = \bt^* + i\al^*,
\]
and
\[
(\widetilde z_c)_x + m \widetilde z_c = g, 
\]
where 
\[
g:= Q^* \Big\{  \cos\Big( \frac{\widetilde z_c}{\sqrt{2}}\Big) -1 \Big\}  + \sqrt{2} \Big\{ \sin\Big( \frac{\widetilde z_c}{\sqrt{2}}\Big) -  \frac{\widetilde z_c}{\sqrt{2}} \Big\} \frac{Q^*_x}{Q^*} +   \widetilde z_c \Big\{ \frac{Q^*_x}{Q^*}  + m \Big\} + 2y_a.
\]
Solving the previous ODE we get
\[
\widetilde z_c(t,x) = \widetilde z_c(t,c_0 t/2) e^{-m(x-c_0 t/2)} + \int_{c_0t/2}^x g(t,s) e^{-m(x-s)}ds,
\]
so that
\[
|\widetilde z_c(t,x)| \lesssim  |\widetilde z_c(t,c_0 t/2)| e^{-\bt^*(x-c_0 t/2)} + \int_{c_0t/2}^x |g(t,s)| e^{-\bt^*(x-s)}ds.
\]
From the Young's inequality we get
\[
\norm{\widetilde z_c (t)}_{L^2(x\geq c_0 t)} \lesssim |\widetilde z_c(t,c_0 t/2)| e^{-\bt^*c_0 t/2} + \|g(t)\|_{L^2(x\geq c_0 t)} e^{-\bt^*c_0 t} .
\]
Clearly
\[
|\widetilde z_c(t,c_0 t/2)| \lesssim \|\widetilde z_c (t)\|_{H^1(\R;\Com)} \leq C\nu, \quad  \|g(t)\|_{L^2(x\geq c_0 t)} \leq C\nu^2 +C\nu e^{-c t}+o(1).
\]
Passing to the limit, we obtain for all $T_n \to +\infty$, $|T_n - t_k|\geq \ve_0$ for all $n$ and $k$,
\[
\lim_{n \to +\infty}\norm{\widetilde z_c(T_n) }_{L^2(x\geq c_0 T_n)}  =0.
\]
A similar result can be obtained in the case of $z_c$ and $(z_c)_x$.  Finally, from \eqref{trick} we get  
\be\label{Asy1}
\lim_{n \to +\infty}\norm{z_b (T_n)}_{H^1(x\geq c_0 T_n)}  =0.
\ee

\medskip

Finally, we repeat the same strategy with \eqref{311} and \eqref{H1ab} to obtain
\[
\lim_{t\to +\infty} \|z_a(T_n)\|_{H^1(x\geq c_0 T_n)} =0.
\]
Note that since the flow map is continuous in time with values in $H^1$, we can extend the result to any sequence $T_n \to +\infty$ by choosing an $\ve_0>0$ smaller but still independent of $k$.

\medskip

\noindent
{\bf Proof of Corollary \ref{Ine}}. Assume that for all $c_0>0$ small it is possible to find $\tilde T_0>0$ very large such that 
\[
 \|z_a(\tilde T_0)\|_{H^1(\R)} < c_0\|z^0_a\|_{H^1(\R)}. 
\]
With no loss of generality we can assume $\tilde T_0 \neq t_k$ for all $k$, otherwise we perturb $\tilde T_0$ and the previous inequality still holds true. We apply the B\"acklund transform twice to find
\[
 \|y_a(\tilde T_0)\|_{H^1(\R)} < Cc_0\|z^0_a\|_{H^1(\R)},
\]
and therefore
\[
 \|y_a(0)\|_{H^1(\R)} < Cc_0\|z^0_a\|_{H^1(\R)},
\]
for some constant $C>0.$ Two consecutive inversions of the B\"acklund transform leads to 
\[
0< \|z_a^0\|_{H^1(\R)} < Cc_0\|z^0_a\|_{H^1(\R)},
\]
which is a contradiction if $c_0>0$ is chosen small enough.

\bigskip
\bigskip

\appendix

\section{Proof of  Lemma \ref{Equal}}\label{AppA}

\medskip

We will use the specific character of the breather and soliton profiles. Since \eqref{BadCondFF} does hold, both $\widetilde Q$ and $Q$ are well-defined everywhere. We have
\[
\sin \Big(\frac{\widetilde B+\widetilde Q}{\sqrt{2}}\Big)   =   \sin (2(\arctan \Theta_1 +\arctan \Theta_2)), 
\]
where from \eqref{tQ} and \eqref{tB},
\[
\Theta_2:=e^{ \bt y_2 + i \al y_1 }, \quad \Theta_{1}:=\frac{\bt}{\al}\frac{\sin(\al y_1)}{\cosh(\bt y_2)}.
\]
We have
\bee
\sin \Big(\frac{\widetilde B+\widetilde Q}{\sqrt{2}}\Big) & =&  2 \Big[\sin (\arctan \Theta_1)\cos(\arctan \Theta_2)+\sin(\arctan \Theta_2)\cos(\arctan \Theta_1) \Big] \\
&  & \quad  \times  \Big[\cos (\arctan \Theta_1)\cos(\arctan \Theta_2)-\sin(\arctan \Theta_1)\sin(\arctan \Theta_2) \Big]\\
& =&  2\Big[\tan (\arctan \Theta_1)\cos^2(\arctan \Theta_1)\cos^2 (\arctan \Theta_2) \\
& &  \qquad  -\sin^2 (\arctan \Theta_1) \tan(\arctan \Theta_2)\cos^2(\arctan \Theta_2) \\
& & \qquad +\cos^2 (\arctan \Theta_1) \tan (\arctan \Theta_2)\cos^2(\arctan \Theta_2) \\
& & \qquad -\sin^2 (\arctan \Theta_2)\tan (\arctan \Theta_1)  \cos^2(\arctan \Theta_1) \Big].
\eee
Since
\[
\sin^2(\arctan(z))=\frac{z^2}{1+z^2}, \quad \cos^2(\arctan(z))=\frac{1}{1+z^2},
\]
we have
\be\label{Tri1}
\sin \Big(\frac{\widetilde B+\widetilde Q}{\sqrt{2}}\Big) = \frac{2(\Theta_1 -\Theta_1^2 \Theta_2 + \Theta_2 -\Theta_2^2\Theta_1)}{(1+\Theta_1^2)(1+\Theta_2^2)} .
\ee
On the other hand
\bee
\frac{1}{\sqrt{2}} ( B - Q)  &  = &   2\partial_x(\arctan \Theta_1 -\arctan \Theta_2) \\
& = & 2 \Big(\frac{\Theta_{1,x}}{1+\Theta_1^2} - \frac{\Theta_{2,x}}{1+\Theta_2^2} \Big) \\
& =&2 \frac{(1+\Theta_2^2) \Theta_{1,x} - (1+\Theta_1^2)\Theta_{2,x}}{(1+\Theta_1^2)(1+\Theta_2^2)}.
\eee
Hence, collecting terms and factorizing, from \eqref{zerot} we are lead to prove that 
\be\label{Tri2}
(1+\Theta_2^2)\Theta_{1,x} - (1+\Theta_1^2)\Theta_{2,x}- (\beta-i\alpha)(\Theta_1 -\Theta_1^2 \Theta_2 + \Theta_2 -\Theta_2^2\Theta_1) =0.
\ee
Now we perform some computations. We have from  \eqref{tQ},
\be\label{T2x}
\Theta_{2,x}=(\beta+i\alpha)\Theta_2,
\ee

\be\label{T2xx}
\al  ( \bt+ i\al \Theta_1^2 ) \cosh^2(\bt y_2) =  \bt (\al \cosh^2(\bt y_2) + i \bt \sin^2(\al y_1) ),
\ee
and
\[
\Theta_{1,x}  =  \Big(\frac{\bt}{\al}\frac{\sin(\al y_1)}{\cosh(\bt y_2)} \Big)_x=\frac{\al\bt\cos(\al y_1)\cosh(\bt y_2)-\bt^2\sin(\al y_1)\sinh(\bt y_2)}{\al\cosh^2(\bt y_2)} ,
\]
so that 
\be\label{T1x}
\Theta_{1,x} - (\bt -i\al)\Theta_1 =\bt \Big[ \frac{\al e^{i\al y_1} \cosh(\bt y_2) -\bt e^{\bt y_2} \sin(\al y_1) }{\al \cosh^2(\bt y_2)} \Big],
\ee
and
\bea\label{T1x2}
[\Theta_{1,x} + (\bt -i\al)\Theta_1]\Theta_2^2 & =&  \bt \Big[ \frac{\al e^{-i\al y_1} \cosh(\bt y_2) + \bt e^{-\bt y_2} \sin(\al y_1) }{\al \cosh^2(\bt y_2)} \Big]e^{2(\bt y_2 + i\al y_1)} \nonu\\
& =&  \bt \Theta_2 \Big[ \frac{\al e^{\bt y_2} \cosh(\bt y_2) + \bt e^{i\al y_1} \sin(\al y_1) }{\al \cosh^2(\bt y_2)} \Big].
\eea
%
%
Using \eqref{T2x}, \eqref{T2xx}, \eqref{T1x} and \eqref{T1x2} we have
\bee
& & \hbox{l.h.s. of \eqref{Tri2}}  = \\
& & =   (1+\Theta_2^2)\Theta_{1,x} - 2(\bt + i\al \Theta_1^2)\Theta_{2}- (\beta-i\alpha)(1  -\Theta_2^2)\Theta_1 \\
& & = [\Theta_{1,x} -(\bt -i\al) \Theta_1]  +[\Theta_{1,x} + (\bt -i\al) \Theta_1] \Theta_2^2  - 2(\bt + i\al \Theta_1^2)\Theta_{2}\\
& & = \bt \Big[ \frac{\al e^{i\al y_1} \cosh(\bt y_2) -\bt e^{\bt y_2} \sin(\al y_1) }{\al \cosh^2(\bt y_2)} \Big]+ \\
& & \quad +\bt \Theta_2 \Big[ \frac{ \al e^{\bt y_2} \cosh(\bt y_2) + \bt e^{i\al y_1} \sin(\al y_1)   -2 \al \cosh^2(\bt y_2) -2 i \bt\sin^2(\al y_1) }{\al \cosh^2(\bt y_2)} \Big]\\
& & = \bt \Big[ \frac{\al e^{i\al y_1} \cosh(\bt y_2) -\bt e^{\bt y_2} \sin(\al y_1) }{\al \cosh^2(\bt y_2)} \Big]+ \\
& & \quad +\bt \Theta_2 \Big[ \frac{ -\al e^{-\bt y_2} \cosh(\bt y_2) + \bt e^{-i\al y_1} \sin(\al y_1) }{\al \cosh^2(\bt y_2)} \Big] \\
& & = 0,
\eee
which proves \eqref{Tri2}.
\bigskip

\end{document}